\documentclass[a4paper,10pt]{amsart}
\newcommand{\thelanguage}{english}
\usepackage[\thelanguage]{babel}

\usepackage{geometry}
\usepackage{mathrsfs}
\usepackage{tkz-euclide, tikz, pgfplots}
\usetikzlibrary{backgrounds,patterns,matrix,calc,arrows,positioning,fit,shapes.geometric,decorations.pathmorphing}
\usepackage{tikz-cd}
\usepackage{float}
\usepackage{amsthm,amsmath,amssymb,amsfonts}
\usepackage[colorlinks=true, citecolor=blue]{hyperref}
\usepackage{multirow}
\usepackage{enumitem}
\usepackage{tabularx}
\usepackage{booktabs}
\usepackage[sort]{cite}
\usepackage[\thelanguage]{cleveref}

\usepackage{marginnote}
\usepackage{etex}
\usepackage{subfig}

%


\DeclareMathAlphabet{\mathpzc}{OT1}{pzc}{m}{it}
\newcommand{\pzc}[1]{\mathpzc{#1}}


\newcommand{\forcebold}[1]{\boldsymbol{#1}}
\newcommand{\tuple}[1]{\mathfrak{#1}}

\newcommand{\Var}[1]{\mathcal{#1}}
\newcommand{\Sec}[2]{\sigma_{#1}(#2)}

\newcommand{\tensor}[1]{\pzc{#1}}
\newcommand{\vect}[1]{\mathbf{#1}}
\newcommand{\sten}[3]{\vect{#1}_{#2}^{#3}}
\newcommand{\Tang}[2]{\mathrm{T}_{#1} {#2}}

\newcommand{\diag}{\operatorname{diag}}

\newcommand{\R}{\mathbb{R}}
\newcommand{\deriv}[2]{\mathrm{d}_{#2} #1}



\numberwithin{equation}{section}
\numberwithin{figure}{section}
\numberwithin{table}{section}

\theoremstyle{plain}
\newcounter{numbering} \numberwithin{numbering}{section}
\newtheorem{thm}[numbering]{Theorem}
\newtheorem{lemma}[numbering]{Lemma}
\newtheorem{prop}[numbering]{Proposition}
\newtheorem{cor}[numbering]{Corollary}
\theoremstyle{definition}

\newtheorem{dfn}[numbering]{Definition}

\theoremstyle{remark}

\newtheorem{rem}[numbering]{Remark}

\crefname{equation}{}{}
\crefname{equation}{}{}
\crefname{figure}{Figure}{Figures}
\crefname{section}{Section}{Sections}
\crefname{table}{Table}{Tables}
\crefname{lemma}{Lemma}{Lemmata}
\crefname{prop}{Proposition}{Propositions}
\crefname{thm}{Theorem}{Theorems}
\crefname{cor}{Corollary}{Corollaries}
\crefname{dfn}{Definition}{Definitions}
\crefname{hyp}{Hypothesis}{Hypotheses}
\crefname{notation}{Notations}{Notations}
\crefname{rem}{Remark}{Remarks}
\crefname{claim}{Claim}{claims}
\parindent0.5cm

\newcommand{\C}{\mathbb{C}}

\newcommand{\refthm}[1]{{\cref{#1}}}
\newcommand{\reflem}[1]{{\cref{#1}}}
\newcommand{\refeqn}[1]{{\cref{#1}}}

\linespread{.94}
\addtolength{\textheight}{66pt}
\raggedbottom
\allowdisplaybreaks

\title[Pencil-based algorithms for CPD are unstable]{Pencil-based algorithms for tensor rank decomposition are not stable}
\author{Carlos Beltr\'an}
\thanks{CB: Universidad de Cantabria, beltranc@unican.es. Supported by Spanish
	``Ministerio de Econom\'ia y Competitividad'' under projects MTM2017-83816-P
	and MTM2017-90682-REDT (Red ALAMA), as well as by the Banco Santander and Universidad de Cantabria under
	project 21.SI01.64658.}
\author{Paul Breiding}
\thanks{PB: Max-Planck-Institute for Mathematics in the Sciences Leipzig, breiding@mis.mpg.de.}
\author{Nick Vannieuwenhoven}
\thanks{NV: KU Leuven, Department of Computer Science, nick.vannieuwenhoven@kuleuven.be. Supported by a Postdoctoral Fellowship of the Research Foundation--Flanders (FWO)}

\begin{document}
\begin{abstract}
We prove the existence of an open set of $n_1\times n_2 \times n_3$ tensors of rank $r$ on which a popular and efficient class of algorithms for computing tensor rank decompositions based on a reduction to a linear matrix pencil, typically followed by a generalized eigendecomposition, is arbitrarily numerically forward unstable. Our analysis shows that this problem is caused by the fact that the condition number of the tensor rank decomposition can be much larger for $n_1 \times n_2 \times 2$ tensors than for the $n_1\times n_2 \times n_3$ input tensor. Moreover, we present a lower bound for the limiting distribution of the condition number of random tensor rank decompositions of third-order tensors. The numerical experiments illustrate that for random tensor rank decompositions one should anticipate a loss of precision of a few digits.
\end{abstract}

\keywords{Jennrich's algorithm; canonical polyadic decomposition; tensor rank decomposition problem; numerical instability; CPD}

\subjclass[2010]{Primary 49Q12, 53B20, 15A69; Secondary 14P10, 65F35, 14Q20}

\maketitle

\section{Introduction} \label{sec_introduction}
We study the numerical stability of one of the most popular and effective class of algorithms for computing the \textit{tensor rank decomposition}, or \textit{canonical polyadic decomposition} (CPD), of a tensor.
Recall that a \textit{rank-$1$ tensor} is represented by a multidimensional $n_1 \times n_2 \times \cdots \times n_d$ array~$\tensor{B} = (b_{i_1,i_2,\ldots,i_d})_{1\leq i_1\leq n_1, \ldots, 1\leq i_d\leq n_d}$ whose elements satisfy the following property:
\[
 b_{i_1,i_2,\ldots,i_d} = b_{i_1}^{(1)} b_{i_2}^{(2)} \cdots b_{i_d}^{(d)}, \text{ where } \vect{b}^{k} = (b_i^{(k)})_{i=1}^{n_k} \in \R^{n_k}.
\]
For brevity, one writes $\tensor{B} = \vect{b}^{1} \otimes \vect{b}^{2} \otimes \cdots \otimes \vect{b}^{d}$.
The CPD of $\tensor{A} \in \R^{n_1 \times \cdots \times n_d}$ was proposed by Hitchcock \cite{hitchcock}. It expresses $\tensor{A}$ as a minimum-length linear combination of rank-$1$ tensors:
\begin{equation} \label{CPD}
 \tensor{A} = \tensor{A}_1+\tensor{A}_2 + \cdots+\tensor{A}_r,\quad\text{where } \tensor{A}_i = \sten{a}{i}{1} \otimes \sten{a}{i}{2} \otimes \cdots \otimes \sten{a}{i}{d} \text{ and } \sten{a}{i}{k} \in \R^{n_k}
\end{equation}
for all $i=1,\ldots,r$ and $k=1,\ldots,d$.
The number $r$ in \refeqn{CPD} is called the \emph{rank} and $d$ is the \textit{order} of $\tensor{A}$. It is often convenient to consider the \textit{factor matrices} $A_1,\ldots,A_d$, where $A_k := [\sten{a}{i}{k}]_{i=1}^r$.

Mainly due to its simplicity and uniqueness properties \cite{Kruskal1977,COV2014},
the CPD has found application in a diverse set of scientific fields; see \cite{Kroonenberg2008,SBG2004,BCS1997,Comon1994,CJ2010,Review2016,KB2009}.
A rank-$r$ tensor $\tensor{A}$ is called \textit{$r$-identifiable} if the set of rank-$1$ tensors $\{\tensor{A}_1, \tensor{A}_2, \ldots, \tensor{A}_r\}$ whose sum is $\tensor{A}$, as in \cref{CPD}, is uniquely determined given $\tensor{A}$.
A classic result result on $r$-identifiability is Kruskal's criterion \cite{Kruskal1977}. It is formulated in terms of the \textit{Kruskal rank} $k_M$ of a matrix $M$: $k_M$ is the largest integer $k$ such that every subset of $k$ columns of $M$ has rank equal to $k$.
\begin{lemma}[Kruskal's criterion] \label{kruskal}
Let $\tensor{A} = \sum_{i=1}^r \sten{a}{i}{} \otimes \sten{b}{i}{}\otimes\sten{c}{i}{}$ be a tensor with factor matrices $A=[\vect{a}_i]_i$, $B=[\vect{b}_i]_i$ and $C=[\vect{c}_i]_i$. A sufficient condition for the $r$-identifiability of $\tensor{A}$ is
\(
 r \le \frac{1}{2}( k_A + k_B + k_C - 2)
\)
and $k_A, k_B, k_C > 1$.
\end{lemma}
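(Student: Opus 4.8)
\medskip
\noindent\textit{Proof proposal.}\quad The plan is to follow the classical route of reducing the statement to Kruskal's combinatorial permutation lemma. Suppose $\tensor{A}$ admits two decompositions into $r$ rank-$1$ terms, with factor matrices $A,B,C$ and $A',B',C'$; the goal is to show that the two lists of rank-$1$ summands coincide up to a reordering. For $\vect{x}\in\R^{n_3}$, contracting $\tensor{A}$ against $\vect{x}$ in the third mode produces the matrix pencil
\[
 \tensor{A}(\vect{x}) \;=\; A\,\diag(C^{T}\vect{x})\,B^{T} \;=\; A'\,\diag(C'^{T}\vect{x})\,B'^{T},
\]
and throughout I write $\omega(\vect{v})$ for the number of nonzero entries of a vector $\vect{v}$.

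The heart of the argument is to trap $\rank\tensor{A}(\vect{x})$ between two bounds. From the primed expression, $\tensor{A}(\vect{x})$ is a sum of $\omega(C'^{T}\vect{x})$ rank-$1$ matrices, so $\rank\tensor{A}(\vect{x})\le\omega(C'^{T}\vect{x})$. From the unprimed expression, restricting to the support $S$ of $C^{T}\vect{x}$ and applying Sylvester's rank inequality to $A_{S}\,\diag\bigl((C^{T}\vect{x})_{S}\bigr)\cdot B_{S}^{T}$, together with the elementary bound $\rank A_{S}\ge\min(|S|,k_{A})$ (any $k_{A}$ columns of $A$ are independent, likewise for $B$), yields
\[
 \rank\tensor{A}(\vect{x}) \;\ge\; \min\bigl(\omega(C^{T}\vect{x}),k_{A}\bigr)+\min\bigl(\omega(C^{T}\vect{x}),k_{B}\bigr)-\omega(C^{T}\vect{x}).
\]
A short case distinction on whether $\omega(C^{T}\vect{x})$ reaches $k_{A}$ and/or $k_{B}$, using $k_{A},k_{B},k_{C}\le r$ and the hypothesis $k_{A}+k_{B}+k_{C}\ge 2r+2$, shows that for every $\vect{x}$ with $\omega(C'^{T}\vect{x})\le r-k_{C}+1$ one necessarily has $\omega(C^{T}\vect{x})<\min(k_{A},k_{B})$; the lower bound above then reads $\rank\tensor{A}(\vect{x})\ge\omega(C^{T}\vect{x})$, so comparing with the upper bound gives
\[
 \omega(C'^{T}\vect{x})\le r-k_{C}+1 \quad\Longrightarrow\quad \omega(C^{T}\vect{x})\le\omega(C'^{T}\vect{x}).
\]
By the symmetry of the hypothesis in the three factors, the analogous implications hold for the pairs $(A,A')$ and $(B,B')$ upon contracting $\tensor{A}$ in the first and second modes.

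Now recall Kruskal's permutation lemma: if $M$ and $\bar{M}$ have $r$ columns, $M$ has no zero column, and $\omega(\bar{M}^{T}\vect{x})\le r-\rank M+1$ implies $\omega(M^{T}\vect{x})\le\omega(\bar{M}^{T}\vect{x})$ for all $\vect{x}$, then $\bar{M}=M\Pi\Lambda$ for a permutation matrix $\Pi$ and a nonsingular diagonal matrix $\Lambda$. Since $k_{A},k_{B},k_{C}>1$ forbids zero columns, and since $k_{C}\le\rank C$ (and likewise in the other two modes) makes the implication just derived at least as strong as the lemma requires, applying the lemma once per mode gives $A'=A\Pi_{1}\Lambda_{1}$, $B'=B\Pi_{2}\Lambda_{2}$, $C'=C\Pi_{3}\Lambda_{3}$. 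Because $k_{A},k_{B},k_{C}>1$ also means that no two columns of $A$, of $B$, or of $C$ are proportional, substituting these three relations back into $\tensor{A}=\sum_{i}\sten{a}{i}{}\otimes\sten{b}{i}{}\otimes\sten{c}{i}{}$ and running a routine bookkeeping argument — relabel and rescale the primed decomposition so that $C'=C$, re-apply the permutation lemma in the first mode, and continue — forces $\Pi_{1}=\Pi_{2}=\Pi_{3}$ and $\Lambda_{1}\Lambda_{2}\Lambda_{3}=I$, i.e.\ the two lists of rank-$1$ summands agree up to one common reordering, which is exactly $r$-identifiability. Essentially every step is elementary rank arithmetic or bookkeeping; \emph{the one genuinely delicate ingredient is Kruskal's permutation lemma}, which I would either cite or prove separately by induction on the number of columns — tracking how the family of supports $\{\operatorname{supp}(M^{T}\vect{x}):\vect{x}\}$ constrains $M$ up to column permutation and scaling — and I expect that to be the main obstacle.
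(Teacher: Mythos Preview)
The paper does not prove this lemma at all: Kruskal's criterion is stated with a citation to \cite{Kruskal1977} and used as a black box. Your proposal, by contrast, sketches the classical proof of Kruskal's theorem itself, reducing to the permutation lemma via Sylvester-type rank bounds on the contracted pencils $\tensor{A}(\vect{x})$. That route is correct in outline and is essentially the original argument; the rank sandwich you set up and the case analysis leading to the key implication $\omega(C'^{T}\vect{x})\le r-k_{C}+1\Rightarrow\omega(C^{T}\vect{x})\le\omega(C'^{T}\vect{x})$ are standard, and your observation that $k_{C}\le\rank C$ makes this implication strong enough for the permutation lemma is exactly right.

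Two small points are worth tightening. First, $r$-identifiability as defined in the paper presupposes that $\tensor{A}$ has rank exactly $r$, and Kruskal's theorem also delivers this; your argument handles it implicitly (a shorter competing decomposition, padded with zero columns, would force $\Lambda$ to be singular, contradicting the permutation lemma's conclusion), but it would be cleaner to say so. Second, the final ``routine bookkeeping'' that forces $\Pi_{1}=\Pi_{2}=\Pi_{3}$ and $\Lambda_{1}\Lambda_{2}\Lambda_{3}=I$ is the place where proofs in the literature most often go wrong; the cleanest way is not to re-apply the permutation lemma but to substitute $A'=A\Pi_{1}\Lambda_{1}$ etc.\ into the $1$-flattening $A(B\odot C)^{T}=A'(B'\odot C')^{T}$, use that $A$ has Kruskal rank at least $2$ to cancel it column by column, and then use that no two columns of $B$ or of $C$ are parallel to pin down the permutations. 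As you correctly flag, the permutation lemma is the only genuinely nontrivial ingredient.
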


Most low-rank tensors satisfy Kruskal's criterion; more precisely, there is an open dense subset of the set of rank-$r$ tensors in $\R^{n_1 \times n_2 \times n_3}$, $n_1 \ge n_2 \ge n_3 \ge 2$, where $r$-identifiability holds, provided that $r \le n_1 + \min\{ \tfrac{1}{2} \delta, \delta\}$ with $\delta := n_2 + n_3 - n_1 - 2$.

The computational problem of recovering the set of rank-$1$ tensors $\{ \tensor{A}_1, \ldots, \tensor{A}_r \}$ whose sum is $\tensor{A}$ is called the \emph{tensor rank decomposition problem} (TDP). When the rank of a third-order tensor is sufficiently small, there are efficient, numerical, direct algorithms for solving the TDP, such as those in \cite{SY1980,Lorber1985,SK1990,LRA1993,FFB2001,DdL2014,DdL2017}. All of these algorithms involve the computation of a \textit{generalized eigendecomposition} (GEVD) of a linear \textit{matrix pencil} constructed from the low-rank input tensor. An algorithm for solving TDPs that involves such a reduction to a matrix pencil will subsequently be called a \textit{pencil-based algorithm} (PBA). This will be given a precise meaning in \cref{def_pencil_algorithm}, where we rigorously define the class of PBAs.

A prototypical example of a PBA is presented next. The essential idea is to project a given tensor $\tensor{A} \in \R^{n_1 \times n_2 \times n_3}$, $n_1 \ge n_2 \ge r$, to a tensor of format $n_1\times n_2\times 2$ and recover the first factor matrix from the latter. The input $\tensor{A} = \sum_{i=1}^r {\vect{a}}_i \otimes {\vect{b}}_i \otimes \vect{c}_i$ is assumed to admit a unique rank-$r$ CPD with $\|\vect{a}_i\|=1$ for all $i=1,\ldots,r$. Let $Q\in\R^{n_3 \times 2}$ be a matrix with orthonormal columns. Then, contracting $\tensor{A}$ along the third mode by $Q^T$, which is a special type of \emph{multilinear multiplication} \cite{KB2009,dSL2008}, yields the tensor
\[
\tensor{B} = (I_{n_1}, I_{n_2}, Q^T) \cdot \tensor{A} := \sum_{i=1}^r \vect{a}_i \otimes \vect{b}_i \otimes \vect{z}_i\in\R^{n_1\times n_2\times 2}, \text{ where } \vect{z}_i = Q^T \vect{c}_i
\]
and $I_m$ denotes the $m \times m$ identity matrix. Let $Q_1 \in \R^{n_1 \times r}$, respectively $Q_2 \in \R^{n_2 \times r}$, be a matrix with orthonormal columns that form a basis for $\{\vect{a}_i\}$, respectively $\{\vect{b}_i\}$. The following is then a specific orthogonal Tucker decomposition \cite{Tucker1966} of $\tensor{B}$:
\[
\tensor{B} := (Q_1, Q_2, I) \cdot \tensor{S} := \sum_{i=1}^r (Q_1 \vect{x}_1') \otimes (Q_2 \vect{y}_i') \otimes \vect{z}_i, \text{ where } \vect{x}_i' = Q_1^T \vect{a}_i \text{ and } \vect{y}_i' = Q_2^T \vect{b}_i.
\]
Let $X = [ \tfrac{\vect{x}_i'}{\|\vect{x}_i'\|}]_{1\leq i\leq r}$ and $Y = [\tfrac{\vect{y}_i'}{\|\vect{y}_i'\|}]_{1\leq i\leq r}$. Then it follows from the properties of multilinear multiplication that the \textit{core tensor} $\tensor{S} = (Q_1^T, Q_2^T, I_{2}) \cdot \tensor{B} \in \R^{r \times r \times 2}$ has the following two \textit{$3$-slices}:
\[
 S_j := (I,I,\vect{e}_j)^T \cdot \tensor{S}
 := \sum_{i=1}^r \lambda_{j,i} \cdot \vect{x}_i \otimes \vect{y}_i
 = \sum_{i=1}^r \lambda_{j,i} \cdot \vect{x}_i \vect{y}_i^T
 = X \operatorname{diag}( \forcebold{\lambda}_j ) Y^T, \quad j=1,2,
\]
where $\forcebold{\lambda}_{j} := [z_{j,i} \|\vect{x}_i'\| \|\vect{y}_i'\|]_{i=1}^r$.
Whenever $S_1$ and $S_2$ are nonsingular, we have
\[
 S_1 S_2^{-1} = X \operatorname{diag}(\forcebold{\lambda}_1) \operatorname{diag}(\forcebold{\lambda}_2)^{-1} X^{-1};
\]
thus $X$ is the matrix of eigenvectors of the GEVD of the nonsingular matrix pencil $(S_1, S_2)$. As long as the eigenvalues are distinct, the matrix $X$ is uniquely determined and it follows that $A = Q_1 X$. Finally, the rank-$1$ tensors $\tensor{A}_i = \vect{a}_i \otimes \vect{b}_i \otimes \vect{c}_i$ are recovered by the following well-known property \cite{KB2009,Review2016} of the $1$-flattening: $\tensor{A}_{(1)} = A (B \odot C)^T$, where $M \odot N := [\vect{m}_i \otimes \vect{n}_i]_{i=1}^r \in \R^{mn \times r}$ is the Khatri--Rao product of $M \in \R^{m \times r}$ and $N \in \R^{n \times r}$. Then, we see that
\[
 A \odot (A^\dagger \tensor{A}_{(1)})^T = A \odot (B \odot C) = A \odot B \odot C = \begin{bmatrix} \tensor{A}_1 & \tensor{A}_2 & \cdots & \tensor{A}_r \end{bmatrix},
\]
where $X^\dagger$ is the Moore--Penrose pseudoinverse of $X$. This procedure thus solves the TDP.

The above algorithm and those in \cite{SY1980,Lorber1985,SK1990,LRA1993,FFB2001,DdL2014,DdL2017} have the major advantage that the CPD can be computed via a sequence of numerically stable and efficient linear algebra algorithms for solving classic problems such as linear system solving, linear least-squares and generalized eigendecomposition problems. In light of the plentiful indications that computing a CPD is a difficult problem---the NP-completeness of tensor rank \cite{Hastad1990}, the ill-posedness of the corresponding approximation problem \cite{dSL2008}, and the potential (average) ill-conditioning of the TDP \cite{BV2018c,BV2017}---the existence of aforementioned algorithms is almost too good to be true. We show that there is a price to be paid in the currency of the achievable precision by establishing the following result.

\begin{thm} \label{thm_unstable}
Let $n_1 \ge n_2 \ge n_3 > r+1 \ge 2$. For every pencil-based algorithm, there exists an open set of the rank-$r$ tensors in $\R^{n_1 \times n_2 \times n_3}$ for which it is unstable.
\end{thm}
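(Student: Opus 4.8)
\emph{Proof strategy.} The plan is to use that every pencil-based algorithm solves, as an unavoidable intermediate step, the tensor rank decomposition of an auxiliary $n_1\times n_2\times 2$ tensor, and to produce an open set of inputs on which this auxiliary problem is arbitrarily worse conditioned than the original one. Fix a pencil-based algorithm $\mathcal{P}$. By \cref{def_pencil_algorithm}, on input $\tensor{A}=\sum_{i=1}^r\vect{a}_i\otimes\vect{b}_i\otimes\vect{c}_i$ the algorithm first computes, by a backward stable procedure, a matrix $Q\in\R^{n_3\times 2}$ with orthonormal columns, forms $\tensor{B}=(I,I,Q^T)\cdot\tensor{A}\in\R^{n_1\times n_2\times 2}$, and thereafter operates only on $\tensor{B}$---the orthogonal Tucker compression to the $r\times r\times 2$ core, the generalized eigendecomposition of the pencil $(S_1,S_2)$, and the back-substitution $\tensor{A}_{(1)}=A(B\odot C)^T$---recovering in particular the first factor matrix $A=[\vect{a}_i]_i$ from $\tensor{B}$ alone. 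Since contraction by an orthonormal $Q^T$ is backward stable, the computed $\widehat{\tensor{B}}$ is the exact contraction of $\tensor{A}+\Delta$ with $\Norm{\Delta}\lesssim u\,\Norm{\tensor{A}}$, $u$ the unit roundoff; hence, even with an exact solver for everything that follows, the computed rank-$1$ terms differ from the true ones by $\Omega\bigl(\kappa(\tensor{B})\,u\bigr)$ for generic roundings---and genericity can be arranged on an open set of inputs---where $\kappa(\tensor{B})$ is the condition number of the CPD of $\tensor{B}$. A numerically forward stable algorithm for the CPD of $\tensor{A}$, by contrast, would incur an error of order $\kappa(\tensor{A})\,u$. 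So it suffices to exhibit an open set of rank-$r$ tensors $\tensor{A}$ on which $\kappa(\tensor{B})/\kappa(\tensor{A})$ is unbounded, \emph{uniformly over all admissible $Q$}.

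For the construction I would exploit a degeneracy that is benign for the $n_3$-mode but fatal once that mode is compressed to dimension $2$. Assume $r\ge 3$, choose $\vect{a}_1,\dots,\vect{a}_r\in\R^{n_1}$ with $\vect{a}_r$ at angular distance $\epsilon$ from $\cspan\{\vect{a}_1,\dots,\vect{a}_{r-1}\}$, and choose $\vect{b}_1,\dots,\vect{b}_r\in\R^{n_2}$ and $\vect{c}_1,\dots,\vect{c}_r\in\R^{n_3}$ linearly independent and otherwise generic; let $\tensor{A}_\epsilon$ be the resulting rank-$r$ tensor and $\mathcal{U}_\epsilon$ a small open neighbourhood of it in the rank-$r$ locus. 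On the one hand, since $n_3>r+1$ forces $n_2,n_3\ge r$, the limiting tensor $\tensor{A}_0$ (with the $\vect{a}_i$ spanning only $r-1$ dimensions) still has rank $r$ and satisfies Kruskal's criterion (\cref{kruskal}) with $k_A\ge r-1$, $k_B=k_C=r$, so $r\le\frac12(3r-3)$; thus $\tensor{A}_0$, hence every nearby $\tensor{A}_\epsilon$, is identifiable and its CPD condition number is bounded, $\kappa(\tensor{A})\le\kappa_0$ on $\mathcal{U}_\epsilon$ with $\kappa_0$ independent of $\epsilon$. On the other hand, for \emph{any} $Q$ with orthonormal columns the factor matrix $X\in\R^{r\times r}$ of the compression of $\tensor{B}=(I,I,Q^T)\cdot\tensor{A}$ has columns the normalizations of $Q_1^T\vect{a}_1,\dots,Q_1^T\vect{a}_r$, so $\sigma_{\min}(X)\lesssim\epsilon$ (the $\vect{a}_i$, and hence the column rank of $A$, are untouched by $Q$); consequently $S_2=X\operatorname{diag}(\cdot)Y^T$ is $O(\epsilon)$-close to singular, the pencil $(S_1,S_2)$ is degenerate to order $\epsilon$, and---since a rank-$r$ tensor in $\R^{n_1\times n_2\times 2}$ whose first factor matrix is rank-deficient carries a positive-dimensional family of rank-$r$ decompositions---$\tensor{B}$ lies within $O(\epsilon)$ of the non-identifiability locus, so the Terracini/Jacobian characterization of the condition number gives $\kappa(\tensor{B})\gtrsim 1/\sigma_{\min}(X)\gtrsim 1/\epsilon$, uniformly in $Q$. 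Combining, $\kappa(\tensor{B})/\kappa(\tensor{A})\gtrsim 1/(\kappa_0\epsilon)\to\infty$ as $\epsilon\to 0$; fixing $\epsilon$ small, and shrinking $\mathcal{U}_\epsilon$ so that the $\Theta(\kappa(\tensor{B})u)$ worst case of the first paragraph is realized by the floating-point execution on every input in it, produces the required open set.

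The main obstacle is the $Q$-uniform lower bound $\kappa(\tensor{B})\gtrsim 1/\epsilon$ as a statement about the \emph{intrinsic} CPD condition number, not just about the particular GEVD route: one must show that rank-deficiency of the first factor matrix really does put a rank-$r$ tensor of format $n_1\times n_2\times 2$ on the non-identifiability discriminant, quantify the distance to it by $\sigma_{\min}$ of the normalized first factor matrix, and show the condition number grows like its reciprocal. This should come from the explicit formula for the CPD condition number of $n_1\times n_2\times 2$ tensors together with standard Terracini-space bookkeeping, but it is the step that carries real content. A secondary obstacle is the complementary, $\epsilon$-uniform upper bound $\kappa(\tensor{A})\le\kappa_0$ on a full neighbourhood---an $\epsilon$-uniform lower bound on the smallest singular value of the Terracini matrix of $\tensor{A}$, which is plausible because only the $\vect{a}$-mode degenerates while the $\vect{b}$- and $\vect{c}$-modes stay generic, and which is exactly where the hypothesis $n_3>r+1$ is needed (with $n_3\le r$ the $\vect{c}$-mode could no longer compensate for the collapsing $\vect{a}$-mode and the asymmetry driving the argument would vanish). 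Finally, the small cases $r\in\{1,2\}$, in which first factors cannot be made nearly dependent without becoming parallel, would require a separate treatment---for instance a construction tailored to the $Q$-rule of the given $\mathcal{P}$.
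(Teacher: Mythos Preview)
Your overall architecture---show that the auxiliary $n_1\times n_2\times 2$ TDP is much worse conditioned than the original one, so that unavoidable roundoff in step S2 produces an excess forward error---matches the paper's. But the construction you use to realise the gap is genuinely different from the paper's, and it has real gaps.

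\textbf{What the paper actually does.} In \cref{def_pencil_algorithm} the projection matrix $Q$ is \emph{fixed}, not computed from $\tensor{A}$; the paper's Remark after the proof of \cref{thm_neighborhood} explicitly says the construction depends on $Q$ and does not cover input-dependent $Q$. Accordingly, the bad tensor $\tensor{O}$ in \cref{eqn_cot} is an \emph{odeco} tensor with orthonormal $A',B',C'$ (so $\kappa(\tensor{O})=1$ exactly), and $C'$ is engineered so that $Q^T\vect{c}_1'=-Q^T\vect{c}_2'$. Then \cref{lem_kappa_lower_nn2} gives $\kappa[\tau_{r;n_1,n_2,2}](\rho_Q(\tensor{O}))=\infty$ via the parallel \emph{third} factors, and a perturbation argument yields $\kappa(\tensor{B})\gtrsim\epsilon^{-1}$ for $\tensor{A}\in\Var{O}_\epsilon$. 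Two further lemmata are essential: \cref{lem_samecond} shows $\kappa[\widehat{\theta}]\gtrsim\kappa[\tau_{r;n_1,n_2,2}]/(10r)$ (the ill-conditioning of the $n_1\times n_2\times 2$ TDP really infects the recovery of $A$), and \cref{lem_nodecrease} shows the resulting error in $A$ cannot be repaired by any choice of $\widetilde{B},\widetilde{C}$ in steps S3--S4.

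\textbf{Where your route diverges and breaks down.} You instead make the \emph{first} factor matrix $A$ nearly rank-deficient, aiming for a $Q$-uniform bound. Two of your claimed inequalities are not established and are not obvious:
\begin{itemize}
\item[(i)] You assert $\kappa(\tensor{B})\gtrsim 1/\sigma_{\min}(X)\gtrsim 1/\epsilon$ ``from the Terracini/Jacobian characterization''. But the condition number is $1/\sigma_{\min}(T)$ with $T$ the \emph{Terracini} matrix of \cref{useful_matrix}, not the first factor matrix; there is no general inequality $\sigma_{\min}(T)\le\sigma_{\min}(X)$. Your heuristic that rank-deficiency of $A$ puts $\tensor{B}$ on a non-identifiability locus is plausible in specific cases but is exactly the ``step that carries real content'' you flag---and it is not proved. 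The paper avoids this entirely by using \cref{lem_kappa_lower_nn2}, which gives a clean lower bound in terms of the \emph{third}-factor angles.
\item[(ii)] You claim $\kappa(\tensor{A})\le\kappa_0$ uniformly because Kruskal's criterion holds at the limit. Kruskal (\cref{kruskal}) gives identifiability, not a condition-number bound; you still need $\sigma_{\min}$ of the Terracini matrix to stay bounded away from zero as $\epsilon\to 0$, which requires a separate argument. The paper sidesteps this by taking $\tensor{O}$ odeco, so $\kappa(\tensor{O})=1$ by \cite[Proposition~5.2]{BV2017}, and then invoking continuity.
\end{itemize}
Finally, you do not supply the analogue of \cref{lem_nodecrease}: even granting that step S2 returns a bad $\widetilde{A}$, you must rule out that steps S3--S4 compensate; your phrase ``even with an exact solver for everything that follows'' is the right instinct but needs the quantitative bound the paper proves there.

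In short: right skeleton, wrong mode. Degenerating the third factors relative to a fixed $Q$ (as the paper does) gives immediate, provable control of both condition numbers; degenerating the first factors buys $Q$-uniformity you do not need and leaves both key estimates open.
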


The instability in the theorem is with respect to the standard model of floating-point arithmetic \cite{higham}, namely
\[
\operatorname{fl}( a ) = (1+\delta)(a) \;\text{ and }\;
 \operatorname{fl}( a \circ b ) = (1 + \delta)(a \circ b), \;|\delta| \le \epsilon_u,\; \text{where } \circ \in \{ +, -, \cdot, / \},
\]
where $\operatorname{fl}(a)$ denotes the floating-point representation of $a$, and $\epsilon_u$ is the unit roundoff. In IEEE double-precision floating-point arithmetic $\epsilon_u \approx 1.11 \cdot 10^{-16}$ \cite[Chapter 2]{higham}.

In practice, \refthm{thm_unstable} covers the algorithms from \cite{SY1980,Lorber1985,SK1990,LRA1993,FFB2001}, \texttt{cpd\_gevd} from Tensorlab~v3.0 \cite{Tensorlab}, \cite[Algorithm 2]{DdL2014}, {and the foregoing prototypical PBA.}
Algorithm 1 of \cite{DdL2014}, as well as both algorithms in \cite{DdL2017}, are likely also unstable because they use an unstable algorithm in intermediate steps; a more thorough analysis would be required to show this rigorously.

\begin{rem}
For higher-order tensors $\tensor{A} \in \R^{n_1 \times \cdots \times n_d}$ with $d \ge 4$ it is a common practice to \emph{reshape} them into a third-order tensor $\tensor{A}_{(\vect{j},\vect{k},\vect{l})} \in \R^{m_1 \times m_2 \times m_3}$ by choosing a partition of the indices $\{1,\ldots,d\}=\{j_1,\ldots,j_s\} \sqcup \{k_1,\ldots,k_t\} \sqcup \{l_1,\ldots,l_u\}$ with $m_1 = j_1\cdots j_s$, $m_2 = k_1\cdots k_t$, and $m_3 = l_1\cdots l_u$. Under the conditions of section 7 of \cite{COV2017}, the CPD of $\tensor{A}_{(\vect{j},\vect{k},\vect{l})}$, i.e., the set of rank-$1$ tensors, can be reshaped back into a set of order-$d$ tensors in $\R^{n_1 \times \cdots \times n_d}$ yielding the CPD of $\tensor{A}$. According to Theorem \ref{thm_unstable} this strategy employs an unstable algorithm as intermediate step, so we should \emph{a priori} expect that the resulting algorithm is also unstable. This can be proved rigorously for $u = |\vect{l}| = 1$ by a slight generalization of the argument in \cref{sec_3o_unstable}. We leave a general proof as an open question.
\end{rem}

It is important to mention that the stabilities of algorithms employed in the intermediate steps of a PBA are \textit{not} the reason why PBAs are unstable. In the above prototypical PBA, all individual steps can be implemented using numerically stable algorithms, but the resulting algorithm is nevertheless unstable. The instability in \cref{thm_unstable} is caused by a large difference
between the \textit{condition numbers} of the TDPs in $\R^{n_1 \times n_2 \times n_3}$ and $\R^{n_1 \times n_2 \times 2}$.

The condition number of the TDP was studied in \cite{BV2017}.\footnote{A condition number of the different problem of computing the \emph{factor matrices} was considered in \cite{V2017}.} Let us denote the set of $n_1\times \cdots\times n_d$ tensors of rank 1 by $\Var S$. This set is actually a smooth manifold, called the \emph{Segre manifold}; see \cref{sec_diff_geom}. Tensors of rank at most $r$ are obtained as the image of the \textit{addition map}
$\Phi_r:\Var{S}^{\times r}\to \R^{n_1\times \cdots \times n_d}, (\tensor A_1,\ldots, \tensor A_r)\to \tensor A_1 + \cdots + \tensor A_r$. The condition number of the TDP at a rank-$r$ tensor $\tensor{A}$ with {\emph{ordered CPD}} $\tuple{a} = (\tensor{A}_1,\ldots, \tensor{A}_r)$ is
\begin{equation}\label{def_kappa}
\kappa(\tensor{A},(\tensor A_1,\ldots, \tensor A_r)) = \lim\limits_{\epsilon \to 0} \,\sup\limits_{\substack{\tensor{B} \text{ has rank } r, \\ \Vert \tensor{A} - \tensor{B}\Vert_F < \epsilon}} \,\frac{\Vert\Phi_\tuple{a}^{-1}(\tensor{A}) - \Phi_\tuple{a}^{-1}(\tensor{B})\Vert_F}{\Vert \tensor{A} - \tensor{B}\Vert_F},
\end{equation}
where $\Phi_\tuple{a}^{-1}$ is the local inverse function of $\Phi_r$ that satisfies $\Phi_\tuple{a}^{-1}(\tensor{A}) = {(\tensor A_1,\ldots, \tensor A_r)}$; see \cite{BV2017}. The norms are the Euclidean norms on the ambient spaces of domain and image of $\Phi_r$, which is naturally identified with the Frobenius norms of tensors, i.e., the square root of the sum of squares of the elements. It follows from the spectral characterization in \cite[Theorem 1.1]{BV2017} that $\tensor{A}$ depends uniquely on \emph{the (unordered) CPD} $\{ \tensor A_1,\ldots, \tensor A_r \}$; therefore we often write $\kappa(\tensor A_1,\ldots, \tensor A_r)$ for the condition number. If such a local inverse does not exist, we have $\kappa(\tensor A_1,\ldots, \tensor A_r) := \infty$. In \cref{sec_diff_geom} we discuss in more detail the existence of this local inverse function; it will be shown in \cref{prop_image} that ``most tensors have a finite condition number.''

While the proof of \cref{thm_unstable} is not straightforward,
the main intuition that led us to its conception is the observation that there appears to be a gap in the expected value of the condition number of TDPs in $\R^{m_1 \times m_2 \times 2}$ and other spaces $\R^{m_1 \times m_2 \times m_3}$, $m_1 \ge m_2 \ge m_3 \ge 3$, as we observed in \cite{BV2018c}. Here, we derived a further characterization of the \emph{distribution} of the condition number of random CPDs, based on a result of Cai, Fan, and Jiang \cite{CFJ13} about the distribution of the minimum distance between random points on spheres.

\begin{thm} \label{thm_probability_dist_est}
Let $\sten{a}{1}{}, \ldots ,\sten{a}{r}{} \in \R^{m_1}$, $\sten{b}{1}{}, \ldots ,\sten{b}{r}{} \in \R^{m_2}$ be arbitrary and fixed, while we assume that $\sten{c}{1}{}, \ldots ,\sten{c}{r}{} \in \R^{m_3}$ are independent random vectors with standard normal entries. Consider the random rank-$1$ tensors $\tensor{A}_i = \sten{a}{i}{} \otimes \sten{b}{i}{}\otimes \sten{c}{i}{}\in \R^{m_1 \times m_2 \times m_3}$. Then, for any $\alpha > 0$ we have
\[
  \mathrm{P}\left[\kappa(\tensor{A}_1,\ldots,\tensor{A}_r)\geq \alpha r^{\frac{2}{m_3-1}}\right]\geq T_{r,\alpha}, \quad\text{ where } \lim\limits_{r\to \infty} T_{r,\alpha} = 1 - e^{ -K \alpha^{1-m_3}};
  \]
herein,
\(
 K = \frac{2^{\frac{1}{2} (m_3-5)}}{\sqrt{\pi}} \frac{\Gamma(\frac{m_3}{2})}{\Gamma(\frac{m_3+1}{2})},
\)
where $\Gamma$ is the gamma function. In particular, if $m_3=2$ we have
\[
\mathrm{P}\left[\kappa(\tensor{A}_1,\ldots,\tensor{A}_r)\geq \alpha r^2\right]\geq T_{r,\alpha}, \quad\text{ where } \lim\limits_{r\to \infty} T_{r,\alpha} = 1 - e^{ -\frac{1}{\sqrt{2}\,\pi\alpha}}\approx \frac{1}{\sqrt{2}\,\pi\alpha}.
\]
\end{thm}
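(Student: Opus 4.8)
The plan is to reduce \refthm{thm_probability_dist_est} to a statement about the minimum pairwise distance among $r$ random points on the sphere $S^{m_3-1}$, via the spectral characterization of the condition number from \cite[Theorem 1.1]{BV2017}. First I would normalize: write $\vect{c}_i = \rho_i \vect{u}_i$ with $\vect{u}_i = \vect{c}_i/\|\vect{c}_i\|$ uniform on $S^{m_3-1}$ and $\rho_i = \|\vect{c}_i\|$. Since $\vect a_i, \vect b_i$ are fixed, the condition number $\kappa(\tensor A_1,\ldots,\tensor A_r)$ depends on the $\vect c_i$ only through the Gram-type structure of the rank-one terms, and I expect the key monotonicity to be: $\kappa$ is bounded below by a decreasing function of $\min_{i\neq j} \dist_{\sin}(\vect u_i, \vect u_j)$ (or of $\min_{i \neq j}\|\vect u_i - \vect u_j\|$), up to constants depending on the fixed data. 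Concretely, the spectral characterization expresses $\kappa$ as the inverse of the smallest singular value of a certain matrix (a Terracini/derivative matrix of the addition map $\Phi_r$), and two rank-one tensors whose $\vect c$-factors are nearly parallel force that matrix to be nearly rank-deficient; quantitatively one should get $\kappa \gtrsim c\, (\min_{i\neq j}\sin\angle(\vect u_i,\vect u_j))^{-1}$ where $c$ absorbs $\|\vect a_i\|,\|\vect b_i\|$ and the conditioning of $[\vect a_i]_i$ and $[\vect b_i]_i$. I would extract this from \cite{BV2017,BV2018c} as a clean lemma: $\kappa(\tensor A_1,\ldots,\tensor A_r) \ge c_0 / \min_{i \neq j} \dist_{\sin}(\vect u_i, \vect u_j)$.

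Next I would invoke the result of Cai, Fan, and Jiang \cite{CFJ13} on the limiting distribution of the minimum angle among $r$ i.i.d.\ uniform points on $S^{m_3-1}$. Their theorem gives that, suitably rescaled, $r^{2/(m_3-1)} \min_{i\neq j}\dist_{\sin}(\vect u_i,\vect u_j)$ converges in distribution to a Weibull-type law, i.e.\ $\mathrm{P}[\,r^{2/(m_3-1)}\min_{i\neq j}\dist_{\sin}(\vect u_i,\vect u_j) \le t\,] \to 1 - e^{-c_{m_3} t^{m_3-1}}$ for an explicit constant $c_{m_3}$ expressed through $\Gamma$-functions and the surface measure of $S^{m_3-1}$. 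Combining with the lemma, the event $\{\kappa \ge \alpha r^{2/(m_3-1)}\}$ contains the event $\{\min_{i\neq j}\dist_{\sin}(\vect u_i,\vect u_j) \le c_0 \alpha^{-1} r^{-2/(m_3-1)}\}$, whose probability I call $T_{r,\alpha}$, and which by \cite{CFJ13} converges to $1 - e^{-c_{m_3}(c_0/\alpha)^{m_3-1}}$. It then remains to compute the constant: matching $c_{m_3} c_0^{m_3-1}$ against the stated $K = 2^{(m_3-5)/2}\pi^{-1/2}\,\Gamma(\tfrac{m_3}{2})/\Gamma(\tfrac{m_3+1}{2})$, which pins down the precise value of $c_0$ that must come out of the spectral lemma (in particular the $m_3 = 2$ specialization should give $K = \tfrac{1}{\sqrt 2\,\pi}$, and $1-e^{-K/\alpha}\approx K/\alpha$ for large $\alpha$ is just the first-order Taylor expansion). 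For $m_3 = 2$ note $r^{2/(m_3-1)} = r^2$, giving the displayed special case verbatim.

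The main obstacle I anticipate is the spectral lemma: extracting the \emph{sharp} lower bound $\kappa \ge c_0/\min_{i\neq j}\dist_{\sin}(\vect u_i,\vect u_j)$ with the \emph{correct} constant $c_0$ (so that the final $K$ comes out exactly as claimed), rather than merely an order-of-magnitude bound. This requires a careful perturbative analysis of the smallest singular value of the Terracini matrix of $\Phi_r$ when exactly one pair of $\vect c$-factors degenerates: one isolates the $2\times 2$ (or appropriately sized) block governing that pair, shows the other factors and the $\vect a,\vect b$ data only contribute lower-order corrections, and tracks how $\sin\angle(\vect u_i,\vect u_j)\to 0$ drives the singular value to zero linearly. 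A secondary, more routine obstacle is bookkeeping the non-uniform part: the $\rho_i = \|\vect c_i\|$ are $\chi$-distributed and independent of the $\vect u_i$, so they do not affect the limiting angle law, but one must check they cannot conspire to \emph{decrease} $\kappa$ below the bound — this should follow from homogeneity of $\kappa$ in each rank-one term together with the fact that $\mathrm{P}[\rho_i \text{ very small or very large}] \to 0$ does not interfere with the liminf. Everything else — the substitution into \cite{CFJ13}, the Taylor expansion for $m_3=2$, and evaluating the $\Gamma$-constant — is a direct computation.
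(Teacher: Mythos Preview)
Your high-level strategy is exactly the paper's: (i) a spectral lower bound on $\kappa$ in terms of the pairwise angles of the normalized $\vect{c}_i$, then (ii) Cai--Fan--Jiang \cite{CFJ13} for the limiting law. What you are missing is that step (i) is far cleaner than you anticipate. The paper's \cref{lem_kappa_lower_nn2} gives
\[
\kappa(\tensor{A}_1,\ldots,\tensor{A}_r)\;\ge\;\max_{i\ne j}\frac{1}{\sqrt{1-|\langle \vect{u}_i,\vect{u}_j\rangle|}},
\]
with constant exactly $1$ and \emph{no dependence whatsoever} on the fixed $\vect{a}_i,\vect{b}_i$. The trick: for the pair $(1,2)$ achieving the maximum, choose the specific unit tangent vectors $\vect{a}_2\otimes\vect{b}_1\otimes\vect{c}_1\in\Tang{\tensor{A}_1}{\Var{S}}$ and $\pm\vect{a}_2\otimes\vect{b}_1\otimes\vect{c}_2\in\Tang{\tensor{A}_2}{\Var{S}}$ (these share the first two factors), form the corresponding two-column test vector $\vect{v}$ for the Terracini matrix $T$, and compute $\|T\vect{v}\|/\|\vect{v}\|=\sqrt{1\mp\langle\vect{c}_1,\vect{c}_2\rangle}$ exactly via \cref{eq:innerprod}. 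So there is no perturbative $2\times2$-block analysis, no lower-order corrections, and no conditioning of $[\vect{a}_i]$ or $[\vect{b}_i]$ to track; the ``main obstacle'' you flag simply does not arise.

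Your ``secondary obstacle'' is also a non-issue: scale invariance $\kappa(t_1\tensor{A}_1,\ldots,t_r\tensor{A}_r)=\kappa(\tensor{A}_1,\ldots,\tensor{A}_r)$ (noted just before \cref{lem_kappa_lower_nn2}, a consequence of \cref{useful_matrix}) means the $\rho_i=\|\vect{c}_i\|$ drop out entirely before any probability is taken. Finally, note the paper phrases the bound in terms of $1-\max|\langle\vect{u}_i,\vect{u}_j\rangle|$ rather than $\sin\angle$; this is the quantity whose rescaled limit \cite[Proposition~17]{CFJ13} gives directly, and the constant $K$ falls out with no further matching.
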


This theorem suggests that \textit{as $m_3$ increases, very large condition numbers become increasingly unlikely}. The worst case thus seems to occur for $m_3 = 2$, which is exactly the space from which PBAs try to recover the CPD.
For example, if $m_3=2$ and $r$ is large we can expect that the condition number is greater than $4r^2$ with probability at least (around) $5\%$.

\subsection*{Outline} The next section recalls some preliminary material.
As \cref{thm_probability_dist_est} provides the main intuition for the main result, we will treat it first in \cref{sec_distribution}. Before proving \cref{thm_unstable}, we need a precise definition of a PBA. This definition relies on the notion of \emph{$r$-nice tensors} that we study in \cref{sec_nice}; these rank-$r$ tensors have convenient differential-geometric properties.
Then, in \cref{sec_pencilbased} we define the class of PBAs.
\Cref{sec_unstable_algorithm} is dedicated to the proof of \cref{thm_unstable}.
Numerical experiments validating the theory and illustrating typical behavior for random CPDs are presented in \cref{sec_numex}. Finally, \cref{sec_conclusions} presents our main conclusions.

\subsection*{Notation}
The following notational conventions are observed throughout this paper: scalars are typeset in lower-case letters ($a$), vectors in bold-face lower-case letters ($\vect{a}$), matrices in upper-case letters ($A$), tensors in a calligraphic font ($\tensor{A}$), and varieties and manifolds in an alternative calligraphic font ($\mathcal{A}$). The unit sphere over a set $V \subset \R^m$ is $\mathbb{S}(V) := \{ \vect{v} \mid \vect{v} \in V, \|\vect{v}\| = 1\}$. The Moore--Penrose pseudoinverse of a matrix $M \in \R^{m \times n}$ is denoted by $M^\dagger$.
The $m \times m$ identity matrix is denoted by $I_m$. The symmetric group of permutations on $r$ elements is denoted by $\mathfrak{S}_r$. $P_\pi$ denotes the $r \times r$ permutation matrix representing the permutation $\pi \in \mathfrak{S}_r$. The standard Euclidean inner product on $\R^m$ is $\langle \vect{x}, \vect{y} \rangle := \vect{x}^T \vect{y}$ for $\vect{x}, \vect{y} \in \R^m$.

\subsection*{Acknowledgements}
We thank Vanni Noferini and Leonardo Robol for interesting discussions on the definition of numerical instability.

\section{Preliminaries} \label{sec_preliminaries}
Some elementary definitions from multilinear algebra and differential geometry are recalled.

\subsection{Multilinear algebra}
The tensor product $\otimes$ of vector spaces $V_1, \ldots, V_d$ is denoted by $\otimes$; see \cite[Chapter 1]{Greub1978}. As the tensor product is unique up to isomorphisms of the vector spaces~$V_1 \times \cdots \times V_d$ and $V_1 \otimes \cdots \otimes V_d$, we will be particularly liberal between the interpretations $\R^{n_1} \otimes \cdots \otimes \R^{n_d} \simeq \R^{n_1 \times \cdots \times n_d} \simeq \R^{n_1 \cdots n_d}$. Elements in the first space are abstract order-$d$ tensors, in the second space they are $d$-arrays, while in the last space they are long vectors. We do not use a ``vectorization'' operator to indicate the natural bijection between the last two spaces.

The tensor product of linear maps is also well defined \cite[Chapter 1]{Greub1978}. We use this definition in expressions $M_1 \otimes \cdots \otimes M_d$, where $M_k = [\vect{m}^k_i]_i \in \R^{m_k \times n_k}$, whose columns are $\vect{m}_{i_1}^1 \otimes \cdots \otimes \vect{m}_{i_d}^d$; the order will not be relevant wherever it is used. The \textit{multilinear multiplication} of a tensor $\tensor{A}= \sum_{i_1,\ldots,i_d} a_{i_1,\ldots,i_d} \vect{e}_{i_1}^1 \otimes \cdots \otimes \vect{e}_{i_d}^d\in \R^{n_1 \times \cdots \times n_d}$ with the above matrices $M_k$ is
\[
(M_1, \ldots, M_d) \cdot \tensor{A}
:= (M_1 \otimes \cdots \otimes M_d)(\tensor{A})
= \sum_{i_1=1}^{n_1} \cdots \sum_{i_d=1}^{n_d} a_{i_1, \ldots, a_d} (M_1 \vect{e}_{i_1}^1) \otimes \cdots \otimes (M_d \vect{e}_{i_d}^d).
\]
This also entails the following well-known formula for the inner product between rank-$1$ tensors:
\begin{equation}\label{eq:innerprod}
 \langle \vect{a}_1 \otimes \cdots \otimes \vect{a}_d, \vect{b}_1 \otimes \cdots \otimes \vect{b}_d \rangle
 = \prod_{k=1}^d \langle \vect{a}_k, \vect{b}_k \rangle;
\end{equation}
see, e.g., \cite[Section 4.5]{Hackbusch2012}. The Khatri--Rao product of the matrices $M_k = [\vect{m}_i^k]_i \in \R^{n_k \times r}$ is
\begin{equation*} 
	M_1 \odot \cdots \odot M_d := [ \vect{m}_i^1 \otimes \cdots \otimes \vect{m}_i^d ]_i \in \R^{n_1 \cdots n_d \times r}.
\end{equation*}
Note that it is a subset of columns from the tensor product $M_1 \otimes \cdots \otimes M_d$.

\subsection{Differential geometry}
The following elementary definitions are presented here only for submanifolds of Euclidean spaces; see, e.g., \cite{Lee2013} for the general definitions.
By a smooth ($C^\infty$) manifold we mean a topological manifold with a smooth structure, in the sense of \cite{Lee2013}.
The \textit{tangent space} at $x$ to an $n$-dimensional smooth submanifold $\Var{M} \subset \R^N$ can be defined as
\[
\Tang{x}{\Var{M}} = \left\{ \vect{v} \in \R^N \;|\; \exists \text{ a smooth curve } \gamma(t) \subset \Var{M} \text{ with }
\gamma(0)=x: \vect{v} = \frac{\mathrm{d}}{\mathrm{d}t}\Big|_{t=0} \, \gamma(t) \right\}.                                                         \]
It is a vector subspace whose dimension coincides with the dimension of $\Var{M}$. Moreover, at every point $x \in \Var{M}$, there exist open neighborhoods $\Var{V} \subset \Var{M}$ and $\Var{U} \subset \Tang{x}{\Var{M}}$ of $x$, and a bijective smooth map $\phi : \Var{V} \to \Var{U}$ with smooth inverse. The tuple $(\Var{V},\phi)$ is a \textit{coordinate chart}
of $\Var{M}$. A \textit{smooth map} between manifolds $F : \Var{M} \to \Var{N}$ is a map such that for every $x \in \Var{M}$ and coordinate chart $(\Var{V},\phi)$ containing $x$, and every coordinate chart $(\Var{W}, \psi)$ containing $F(x)$, we have that $\psi \circ F \circ \phi^{-1} : \phi(\Var{U}) \to \psi(F(\Var{U}))$ is a smooth map. The \textit{derivative} of $F$ can be defined as the linear map $\deriv{F}{x} : \Tang{x}{\Var{M}} \to \Tang{F(x)}{\Var{N}}$ taking the tangent vector $\vect{v} \in \Tang{x}{\Var{M}}$ to $\frac{\mathrm{d}}{\mathrm{d}t}|_{t=0} F(\gamma(t)) \in \Tang{F(x)}{\Var{N}}$ where $\gamma(t) \subset \Var{M}$ is a curve with $\gamma(0) = x$ and $\gamma'(0) = \vect{v}$.

A \textit{Riemannian manifold} $(\Var{M},g)$ is a smooth manifold $\Var{M}$ equipped with a Riemannian metric $g$, which is an inner product $g_x(\cdot, \cdot)$ on the tangent space $\Tang{x}{\Var{M}}$ that varies smoothly with $x \in \Var{M}$. If $\Var{M} \subset \R^m$, then the inherited Riemannian metric from $\R^m$ is $g_x( \vect{x},\vect{y} ) = \langle \vect{x}, \vect{y} \rangle$ for every $x \in \Var{M}$. The length of a smooth curve $\gamma:[0,1]\to\Var{M}$ is defined by
\[
\operatorname{length}_{\Var{M}}(\gamma)=\int_0^1g_{\gamma(t)}(\gamma'(t),\gamma'(t))^{1/2}\,dt,
\]
and the distance $\operatorname{dist}_{\Var{M}}(x,y)$ between two points $x,y\in\Var{M}$ is the length of the minimal curve with extremes $x$ and $y$.

In \cref{sec_introduction}, we denoted the Segre manifold of rank-1 tensors in $\R^{n_1\times \cdots \times n_d}$ by $\Var{S}$. To emphasize the format, we sometimes write $\Var{S}_{n_1,\ldots,n_d}$ instead. \Cref{sec_introduction} also defined the \textit{addition map}
\begin{equation}\label{def_Phi}
\Phi_r : \Var S^{\times r} \to \R^{n_1\times \cdots\times n_d},\quad (\tensor{A}_1,\ldots,\tensor{A}_r)\mapsto \tensor{A}_1+\cdots+\tensor{A}_r.
\end{equation}
Tensors of rank (at most) $r$ are denoted by
\begin{equation}\label{def_sigma}
  \sigma_r = \Sec{r}{\Var{S}_{n_1,\ldots,n_d}} = \Phi_r( (\Var{S}_{n_1,\ldots,n_d})^{\times r}  ) = \left\{ \sum_{i=1}^r \sten{a}{i}{1} \otimes \cdots \otimes \sten{a}{i}{d} \;|\; \sten{a}{i}{k} \in \R^{n_k} \right\}.
\end{equation}
It is a semi-algebraic set by the Tarski--Seidenberg principle \cite{BCR1998}, because it is the projection of an algebraic variety, namely the graph of $\Phi_r$ \cite{Lee2013}. Recall that this means that $\sigma_r$ can be described as the locus of points that satisfy a system of polynomial equations and inequalities; see \cite{BCR1998}.
The dimension of $\sigma_r$ equals the dimension of the smallest $\R$-variety $\overline{\sigma_r}$ containing it \cite[Chapter 2]{BCR1998}.

\subsection{Numerical analysis}
For a smooth map $f : \Var{M} \to \Var{N}$ between Riemannian manifolds $(\Var{M},g)$ and $(\Var{N},h)$ there is a standard definition of the condition number \cite{Rice1966,BC2013,BCSS1998}, which generalizes the classic case of smooth maps between Euclidean spaces, namely
\begin{align} \label{def_kappa_general}
 \kappa[f](x) = \max_{t_x \in \Tang{x}{\Var{M}}} \frac{\| (\deriv{f}{x})(t_x) \|_{\Var{N},f(x)}}{\| t_x \|_{\Var{M},x}},
\end{align}
where $\deriv{f}{x} : \Tang{x}{\Var{M}} \to \Tang{f(x)}{\Var{N}}$ is the derivative of $f$, and $\| t_x \|_{\Var{M},x} := \sqrt{ g_x( t_x, t_x) }$ for $t_x\in\Tang{x}{\Var{M}}$ (resp.~$\| t_y \|_{\Var{N},y} := \sqrt{ h_y (t_y, t_y) }$ for $t_y \in \Tang{y}{\Var{N}}$) is the norm on the tangent space $\Tang{x}{\Var{M}}$ (resp.~$\Tang{y}{\Var{N}}$) induced by the Riemannian metric $g$ (resp.~$h$).

\section{Estimating the distribution of the condition number} \label{sec_distribution}
We start by proving the second main result, \refthm{thm_probability_dist_est}, because little technical machinery is required. In the proof, we use the following identification of the condition number with the inverse of the smallest singular value of an auxiliary matrix: for $1\leq i\leq r$ let $U_i$ be a matrix whose columns form an orthonormal basis of $\Tang{\tensor{A}_i}{\Var{S}}$. Then, by \cite[Theorem 1.1]{BV2017},
\begin{equation}\label{useful_matrix}
\kappa(\tensor{A}_1,\ldots,\tensor{A}_r) = \frac{1}{\varsigma_{\min}( \deriv{\Phi_r}{(\tensor{A}_1,\ldots,\tensor{A}_r)} )} =  \frac{1}{\varsigma_{\min}(\left[\begin{smallmatrix} U_1 & \cdots & U_r \end{smallmatrix}\right])},
\end{equation}
where $\varsigma_{\min}$ denotes the smallest singular value.
The smallest singular value $\varsigma_{\min}( \deriv{\Phi_r}{(\tensor{A}_1,\ldots,\tensor{A}_r)} )$ is actually equal to the $r(n_1+\cdots+n_d-d+1)$th singular value of the Jacobian matrix of $\Phi_r$ seen as a $C^\infty$ map from $\R^{r n_1\cdots n_d}$ to $\R^{n_1 \cdots n_d}$. Moreover, from \refeqn{useful_matrix} it follows that the condition number is \emph{scale invariant}: for all $t_1,\ldots,t_r\in \R\backslash\{0\}$ we have
$
\kappa(t_1\tensor{A}_1,\ldots, t_r\tensor{A}) = \kappa(\tensor{A}_1,\ldots, \tensor{A}).
$
Cai, Fan, and Jiang \cite{CFJ13} proved tail probabilities for the maximal pairwise angle of an independent sample of uniformly distributed points on the sphere. The idea for using their results in the proof of \refthm{thm_probability_dist_est} is to lower bound the condition number by such a maximal angle. This we do next.
\begin{lemma} \label{lem_kappa_lower_nn2}
For $i=1,\ldots,r$ let $\tensor{A}_i = t_i\, \sten{a}{i}{} \otimes \sten{b}{i}{} \otimes \sten{c}{i}{} \in \R^{m_1 \times m_2 \times m_3}$ be fixed rank-$1$ tensors with~$t_i \in \R\backslash\{0\}$ and $\Vert \sten{a}{i}{}\Vert = \Vert \sten{b}{i}{}\Vert = \Vert \sten{c}{i}{}\Vert = 1$ for all $i$. Then, we have
\[
 \kappa(\tensor{A}_1, \ldots, \tensor{A}_r) \ge \max_{1 \le i\ne j \le r} \frac{1}{\sqrt{1 - |\langle \sten{c}{i}{}, \sten{c}{j}{} \rangle|}}.
\]
\end{lemma}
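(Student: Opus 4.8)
The plan is to produce a single explicit ``worst direction'' in the domain of $\deriv{\Phi_r}{(\tensor A_1,\ldots,\tensor A_r)}$ that is supported on only two of the $r$ rank-$1$ summands, and to read off the bound from \eqref{useful_matrix}. Since the condition number depends on the summands only through the tangent spaces $T_k:=\Tang{\tensor A_k}{\Var S}$, which are unchanged under rescaling $\tensor A_k\mapsto t_k\tensor A_k$, I would first reduce to $t_1=\dots=t_r=1$. By \eqref{useful_matrix} it then suffices to upper bound $\varsigma_{\min}\bigl([\,U_1\ \cdots\ U_r\,]\bigr)$, where the columns of $U_k$ form an orthonormal basis of $T_k$. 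Padding with zeros on all blocks other than a fixed pair $i\neq j$ shows that, for \emph{any} unit vectors $\vect u_i\in T_i$ and $\vect u_j\in T_j$,
\[
 \varsigma_{\min}\bigl([\,U_1\ \cdots\ U_r\,]\bigr)\ \le\ \min\bigl\{\,\|\alpha\vect u_i+\beta\vect u_j\|\ :\ \alpha^2+\beta^2=1\,\bigr\}.
\]
Thus the whole proof reduces to choosing $\vect u_i,\vect u_j$ well and optimizing a $2\times 2$ quadratic.

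The heart of the argument — and, I expect, the only nonroutine step — is the choice of these two tangent vectors. A naive rank-$1$ tangent vector at $\tensor A_i$ paired with one at $\tensor A_j$ yields an inner product of the form $\langle\vect a_i,\vect a_j\rangle\langle\vect b_i,\vect b_j\rangle\langle\vect c_i,\vect c_j\rangle$, which is in general strictly smaller in absolute value than $|\langle\vect c_i,\vect c_j\rangle|$ and hence too weak. The trick I would use is to take the ``crossed'' vectors
\[
 \vect u_i:=\vect a_j\otimes\vect b_i\otimes\vect c_i,\qquad \vect u_j:=\vect a_j\otimes\vect b_i\otimes\vect c_j,
\]
that is, borrow the first-mode vector of $\tensor A_j$ for a tangent vector at $\tensor A_i$, and the second-mode vector of $\tensor A_i$ for a tangent vector at $\tensor A_j$. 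Membership $\vect u_i\in T_i$ follows by differentiating the curve $t\mapsto(\vect a_i+t\,\vect a_j)\otimes\vect b_i\otimes\vect c_i\subset\Var S$ at $t=0$, and $\vect u_j\in T_j$ by differentiating $t\mapsto\vect a_j\otimes(\vect b_j+t\,\vect b_i)\otimes\vect c_j$ at $t=0$ (both curves lie in $\Var S$ for small $t$, since the perturbed factor stays nonzero). By \eqref{eq:innerprod} and the normalizations $\|\vect a_k\|=\|\vect b_k\|=\|\vect c_k\|=1$, both $\vect u_i$ and $\vect u_j$ are unit vectors; and since they share their first two tensor factors, $\langle\vect u_i,\vect u_j\rangle=\langle\vect a_j,\vect a_j\rangle\langle\vect b_i,\vect b_i\rangle\langle\vect c_i,\vect c_j\rangle=\langle\vect c_i,\vect c_j\rangle$, with the first- and second-mode correlations cancelling out.

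Finally, on the circle $\alpha^2+\beta^2=1$ one has $\|\alpha\vect u_i+\beta\vect u_j\|^2=1+2\alpha\beta\,\langle\vect c_i,\vect c_j\rangle$, whose minimum is $1-|\langle\vect c_i,\vect c_j\rangle|$, attained by giving $\alpha\beta$ the sign opposite to $\langle\vect c_i,\vect c_j\rangle$. Combining this with the displayed inequality gives $\varsigma_{\min}\bigl([\,U_1\ \cdots\ U_r\,]\bigr)\le\sqrt{1-|\langle\vect c_i,\vect c_j\rangle|}$, hence $\kappa(\tensor A_1,\ldots,\tensor A_r)\ge\bigl(1-|\langle\vect c_i,\vect c_j\rangle|\bigr)^{-1/2}$; taking the maximum over all pairs $i\neq j$ finishes the proof. (When $|\langle\vect c_i,\vect c_j\rangle|=1$ the bound reads $\kappa=\infty$, which is consistent, as the last display then forces $\varsigma_{\min}=0$.) Everything here is elementary except the crossed choice of $\vect u_i,\vect u_j$; the curve computations and the one-variable optimization are routine and I would not spell them out in detail.
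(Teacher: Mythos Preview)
Your proof is correct and follows essentially the same approach as the paper: both use the spectral characterization \eqref{useful_matrix} and the same ``crossed'' pair of unit tangent vectors $\vect a_j\otimes\vect b_i\otimes\vect c_i\in T_i$ and $\vect a_j\otimes\vect b_i\otimes\vect c_j\in T_j$ to bound $\varsigma_{\min}$. The only cosmetic difference is that the paper tests the two specific coefficients $(\alpha,\beta)=\tfrac{1}{\sqrt 2}(1,\pm1)$ separately, whereas you optimize over the whole circle $\alpha^2+\beta^2=1$; the resulting bound is identical.
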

\begin{proof}
Without restriction we can assume that the maximum is attained for $i=1$ and $j = 2$. By~\refeqn{useful_matrix}, the condition number is the inverse of the least singular value of the matrix
\(
 T = [ U_i ]_{i=1}^r
\)
where $U_i$ is any orthonormal basis for $\Tang{\tensor{A}_i}{\Var{S}}$.
In particular, the following orthonormal bases can be chosen for $\Tang{\tensor{A}_{1}}{\Var{S}}$ and $\Tang{\tensor{A}_{2}}{\Var{S}}$ (see, e.g., \cite[Section 5.1]{BV2017}):
\begin{align*}
U_{1} &=
\begin{bmatrix}
I_{n_1} \otimes \sten{b}{1}{} \otimes \sten{c}{1}{} &
\sten{a}{1}{} \otimes Q^2_{1} \otimes \vect{c}_{1} &
\vect{a}_{1} \otimes \vect{b}_{1} \otimes Q^3_{1}
\end{bmatrix} \text{ and } \\
U_{2} &=
\begin{bmatrix}
Q^1_{2} \otimes \sten{b}{2}{} \otimes \sten{c}{2}{} &
\sten{a}{2}{} \otimes I_{n_2} \otimes \vect{c}_{2} &
\vect{a}_{2} \otimes \vect{b}_{2} \otimes Q^3_{2}
\end{bmatrix},
\end{align*}
for $Q_i^1$, $Q_i^2$, $Q_i^3$ being orthonormal bases for $\vect{a}_i^\perp$, $\vect{b}_i^\perp$, $\vect{c}_i^\perp$, respectively.
Observe that $U_1$ contains the tangent vector $\sten{a}{2}{}\otimes \sten{b}{1}{}\otimes \sten{c}{1}{}$ and $U_2$ contains the tangent vector $\sten{a}{2}{}\otimes \sten{b}{1}{}\otimes \sten{c}{2}{}$ as columns. Then, using the computation rules for inner products from \eqref{eq:innerprod}, we find that the least singular value of $T$ is smaller than
$$
\frac{\Vert\sten{a}{2}{}\otimes \sten{b}{1}{}\otimes \sten{c}{1}{}- \sten{a}{2}{}\otimes \sten{b}{1}{}\otimes \sten{c}{2}{}\Vert}{\sqrt{\Vert \sten{a}{2}{}\otimes \sten{b}{1}{}\otimes \sten{c}{1}{}\Vert^2 + \Vert \sten{a}{2}{}\otimes \sten{b}{1}{}\otimes \sten{c}{2}{}\Vert^2}}
= \frac{\sqrt{2-2\langle\sten{a}{2}{}\otimes \sten{b}{1}{}\otimes \sten{c}{1}{},\sten{a}{2}{}\otimes \sten{b}{1}{}\otimes \sten{c}{2}{}\rangle}}{\sqrt{2}}
= \sqrt{1-\langle\sten{c}{1}{},\sten{c}{2}{}\rangle}.
$$
Repeating the argument for the tangent vector $-\sten{a}{2}{}\otimes \sten{b}{1}{}\otimes \sten{c}{2}{}$ in $U_2$ we get
\[
 \kappa(\tensor{A}_1, \ldots, \tensor{A}_r) \ge \max_{1 \le i\ne j \le r} \max\left\{\frac{1}{\sqrt{1 - \langle \sten{c}{i}{}, \sten{c}{j}{} \rangle}}, \frac{1}{\sqrt{1  + \langle \sten{c}{i}{}, \sten{c}{j}{} \rangle}}\right\} = \max_{1 \le i\ne j \le r} \frac{1}{\sqrt{1 - \vert\langle \sten{c}{i}{}, \sten{c}{j}{} \rangle\vert}},
\]
concluding the proof.
\end{proof}

Now we are ready to prove \refthm{thm_probability_dist_est}.
\begin{proof}[Proof of \refthm{thm_probability_dist_est}]
Recall that for a random vector with i.i.d.\ standard normal entries $\vect{x}$, the normalized vector $\Vert \vect{x}\Vert^{-1}\,\vect{x}$ is uniformly distributed in the sphere. From the invariance of the condition number under scaling, we can assume that the entries of $\sten{c}{i}{}$, $1\leq i\leq d$, are uniformly distributed in $\mathbb{S}(\R^{m_3})$. This and \reflem{lem_kappa_lower_nn2} show that
\begin{align*}
\mathrm{P}\left[\kappa(\tensor{A}_1,\ldots,\tensor{A}_r)\geq \alpha r^{\frac{2}{m_3-1}} \right]&\geq\mathrm{P}\left[\max_{\substack{1 \le i\ne j \le r}} \frac{1}{\sqrt{1 - \vert\langle\sten{c}{i}{}, \sten{c}{j}{}\rangle\vert}}\geq \alpha r^{\frac{2}{m_3-1}} \right]\\
&= \mathrm{P}\left[r^{\frac{4}{m_3-1}}\left( {{1 - \max_{\substack{1 \le i\ne j \le r}}\vert\langle\sten{c}{i}{}, \sten{c}{j}{}\rangle\vert}}\right)\leq \alpha^{-2} \right].
\end{align*}
From \cite[Proposition 17]{CFJ13}, for any fixed $\alpha>0$, this last expression has  limit $1-e^{-K\alpha^{1-m_3}}$. This concludes the proof.
\end{proof}

\begin{figure}[tb]
\subfloat[$A$, $B$, and $C$ i.i.d.\ standard normal entries.]{\includegraphics[width=.82\textwidth]{./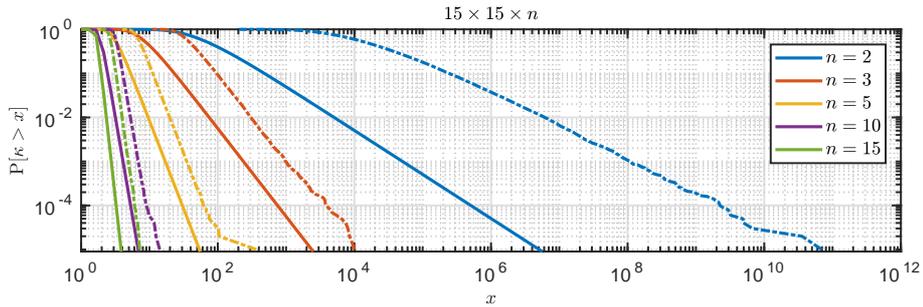}} \\
\subfloat[Arbitrary orthogonal matrices $A$ and $B$; $C$ i.i.d.\ standard normal entries.]{\includegraphics[width=.82\textwidth]{./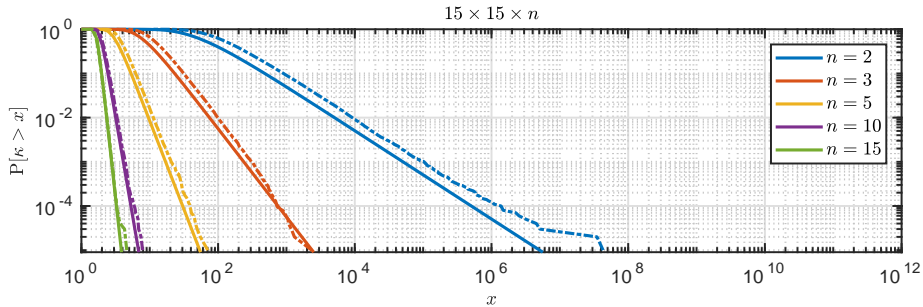}}
\caption{The empirical complementary cumulative distribution function of the condition number for rank-$15$ tensors of size $15 \times 15 \times n$ is shown in dashed lines. The corresponding solid lines show the lower bound from \cref{thm_probability_dist_est}. The tensors $\tensor{A}=\sum_{i=1}^{15} \vect{a}_i \otimes \vect{b}_i \otimes \vect{c}_i$ were generated by randomly sampling factor matrices $A\in\R^{15 \times 15}$, $B \in \R^{15\times15}$ and $C \in \R^{n \times 15}$, as indicated.}
\label{fig_cond_distribution}
\end{figure}

\Cref{thm_probability_dist_est} is illustrated in \cref{fig_cond_distribution} for $15 \times 15 \times n$ tensors of rank $15$ for $n=2,3,5,10,15$. Every solid line represents a limiting \textit{complementary cumulative distribution function} (ccdf) $\lim_{r\to\infty} T_{r,\alpha}$ from \cref{thm_probability_dist_est}, which provide asymptotic lower bounds on the ccdfs of the condition numbers of random rank-$r$ CPDs. The dashed lines in \cref{fig_cond_distribution} show the empirical ccdfs of the condition number based on two different Monte Carlo experiments.

In the first set of experiments, visualized in \cref{fig_cond_distribution}(A), we generated $10^5$ random rank-15 tensors $\tensor{A}=\sum_{i=1}^{15} \vect{a}_i \otimes \vect{b}_i \otimes \vect{c}_i$ by independently sampling the entries of the factor matrices $A = [\vect{a}_i]\in\R^{15\times 15}$, $B=[\vect{b}_i] \in \R^{15\times 15}$ and $C=[\vect{c}_i]\in\R^{n\times 15}$ from a standard normal distribution. It is observed that the limiting distribution of \cref{thm_probability_dist_est} seems to approximate the shape of the distribution of the condition numbers reasonably well. However, the lower bound seems rather weak for $n=2$. One of the main observations, which is also evident from the formula of the limiting distribution, is that \textit{as $n$ increases the probability of sampling tensors with a high condition number decreases}. As is evident from the empirical ccdf in \cref{fig_cond_distribution}(A), $n=2$ admits the worst distribution by far: there is a $10\%$ probability of sampling a condition number greater than $10^5$, and still a $0.1\%$ chance to encounter a condition number greater than~$10^8$. On the other hand, for $n=15$, all sampled tensors had a condition number less than $10$.

In the second set of experiments, shown in \cref{fig_cond_distribution}(B), we generated $10^5$ random rank-$15$ tensors of size $15 \times 15 \times n$ in a different way in order to illustrate the quality of the lower bound in \cref{thm_probability_dist_est}. This time, after sampling the factor matrices $(A,B,C)$ as above, we perform Gram--Schmidt orthogonalization of $A$ and $B$. As can be seen in \cref{fig_cond_distribution}(B), the empirical ccdfs here are close to the corresponding limiting distributions.

We had one additional reason to treat \cref{thm_probability_dist_est} first: on a fundamental level, a PBA solves the TDP for $n_1 \times n_2 \times n_3$ tensors by transforming it into a TDP for $n_1 \times n_2 \times 2$ tensors. The above experiments clearly show that the latter problem has a much worse distribution of condition numbers than the original problem. In other words, from the viewpoint of sensitivity, \textit{PBAs try to solve an easy problem via the solution of a significantly more difficult problem}. This approach is nearly guaranteed to end in instability.

\section{The manifold of $r$-nice tensors} \label{sec_nice}
While the instability of PBAs is already plausible from \cref{fig_cond_distribution}, proving \cref{thm_unstable} is substantially more complicated. In order to prove it, we should first formalize what we mean by ``solving a TDP.'' This problem is rife with subtleties.

For example, what should the solution of a TDP be if the input tensor $\tensor{A}$ is the generic rank-$11$ tensor in $\C^{11 \times 6 \times 3}$? This tensor has $352,716$ isolated CPDs \cite{HOOS2016}. Computing all of them seems computationally infeasible. Nevertheless, all of them are well-behaved because each one of these will vary smoothly in a small open neighborhood of $\tensor{A}$ in $\C^{11 \times 6 \times 3}$. On the other hand, the generic rank-$6$ tensor of multilinear rank $(4,4,4)$ $\tensor{B}$ in $\C^{6 \times 6 \times 6}$ behaves erratically. It has~$2$ isolated decompositions \cite[Theorem 1.3]{CO2012}, but a generic rank-$6$ tensor close to $\tensor{B}$ has only one decomposition that can be moved around continuously such that its limit is a decomposition of~$\tensor{B}$. This process works for both of $\tensor{B}$'s decompositions, because the rank-$6$ tensors have two smooth folds meeting in $\tensor{B}$ \cite[Example 4.2]{COV2014}. What should an algorithm compute in this case?

For an $r$-identifiable tensor $\tensor{A}$ there is an unambiguous answer to the above question. Namely, the solution is the unique set of rank-$1$ tensors $\{\tensor{A}_1,\ldots,\tensor{A}_r\}$ whose sum is $\tensor{A}$. The goal of this section is to carefully define a \emph{tensor decomposition map} $\tau_{r;n_1,\ldots,n_d}$ in \cref{Phi_inverse} whose computation solves the TDP for a subset of rank-$r$ tensors.
The domain where the smooth function $\tau_{r;n_1,\ldots,n_d}$ is well defined deserves its own definition, \cref{def_nice} below; we call it the manifold of \textit{$r$-nice tensors} $\Var N \subset \sigma_r$.
In \cref{prop_image} we prove that $\Var N$ is a Zariski open dense subset of the set of rank-$r$ tensors, so that ``almost all tensors are $r$-nice.''

Before defining $\Var{N}$, we first need the following two standard definitions.
If for a collection of $r$ vectors $\vect p_1,\ldots,\vect p_r \in \R^n$ every subset of $\min\{r,n\}$ many vectors is linearly independent, then the vectors are said to lie in \textit{general linear position} (GLP). We say that a collection of $r$ rank-$1$ tensors $\{ \sten{a}{i}{1} \otimes \cdots \otimes \sten{a}{i}{d} \}_i$ is in \textit{super general linear position} (SGLP) if for every $1\leq s\leq d$ and every $\vect{h} \subset \{1,\ldots,d\}$ with $|\vect{h}| = s$, the set $\{ \sten{a}{i}{h_1} \otimes \cdots \otimes \sten{a}{i}{h_s} \}_i$ is in GLP.

\begin{dfn}[$r$-nice tensors]\label{def_nice}
Recall from~\refeqn{def_sigma} the definition of rank-$r$ tensors $\sigma_r$ and its closure $\overline{\sigma_r}$. Then, $\Var{M}_{r;n_1,\ldots,n_d}\subset \Var{S}_{n_1,\ldots,n_d}^{\times r}$ is defined to be the set containing all the rank-$1$ tuples $\tuple{a}=(\tensor{A}_1,\ldots,\tensor{A}_r)$ satisfying the following properties:
\begin{enumerate}
\item[(i)] $\Phi_r(\tuple{a})$ is a smooth point of $\overline{\sigma_r}$,
\item[(ii)] $\Phi_r(\tuple{a})$ is $r$-identifiable, and, thus, has rank equal to $r$,
\item[(iii)] $\tuple{a}$ has finite condition number,
\item[(iv)] $\tuple{a}$ is in SGLP, and
\item[(v)] for all $i$ the $(1,1,\ldots,1)$-entry of $\tensor{A}_i$ is not equal to zero.
\end{enumerate}
The $r$-nice tensors $\Var{N}_{r;n_1,\ldots,n_d}$ are defined to be the image of $\Var{M}_{r;n_1,\ldots,n_d}$ under the addition map~$\Phi_r$ from \refeqn{def_Phi}:
\[
\Var{N}_{r;n_1,\ldots,n_d} := \Phi_r( \Var{M}_{r;n_1,\ldots,n_d} ).
\]
If it is clear from the context we drop the subscript from both $\Var{M}_{r;n_1,\ldots,n_d}$ and $\Var{N}_{r;n_1,\ldots,n_d}$ and simply write $\Var M$ and $\Var N$.
\end{dfn}
\begin{rem}
The reason for the last requirement, (v), is that under this restriction we can define a parametrization of rank-$1$ tensors that is a \textit{diffeomorphism}; see the next subsection for details.
\end{rem}

\subsection{Elementary results} \label{sec_diff_geom}
Before proceeding, we need a few elementary results related to the differential geometry of CPDs, which we did not find in the literature.

The rank-$1$ tensors in $\R^{n_1 \times \cdots \times n_d}$, i.e., 
$
\Var{S} = \{ \vect{a}_1 \otimes \cdots \otimes \vect{a}_d \;|\; \vect{a}_k \in \R^{n_k} \}\setminus\{0\},
$
form the affine cone over a smooth projective variety (see, e.g., \cite{Landsberg2012}) and, hence, $\Var{S}$ is an analytic submanifold of $\R^{n_1 \times \cdots \times n_d}$. Its dimension is $1 + \sum_{k=1}^d (n_k-1)$ \cite{Landsberg2012}. The map\footnote{The following items are most naturally considered in projective space, but in order to avoid as much technicalities as is feasible we prefer to present the results concretely as subspaces of Euclidean spaces.}
\[
 \Psi_{n_1,\ldots,n_d}: \R\setminus\{0\} \times \mathbb{S}(\R^{n_1}) \times \cdots \times \mathbb{S}(\R^{n_d}) \to \Var{S},\; (\lambda, \vect{u}_1, \ldots, \vect{u}_d) \mapsto \lambda \vect{u}_1 \otimes \cdots \otimes \vect{u}_d
\]
is a surjective \emph{local diffeomorphism}: every point in the domain has an open neighborhood such that $\Psi_{n_1,\ldots,n_d}$ restricted to this neighborhood is an open, smooth ($C^\infty$), bijective map with smooth inverse \cite[p.~79]{Lee2013}. Indeed, it can be verified that the derivative is injective at every point; see, e.g., \cite[Section 5.1]{BV2017}. Note that the fiber of $\Psi_{n_1,\ldots,n_d}$ at $\lambda \vect{u}_1 \otimes \cdots \otimes \vect{u}_d$ is exactly the set
\(
 \{ (\omega_0 \lambda, \omega_1 \vect{u}_1, \ldots, \omega_d \vect{u}_d) \;|\;  \omega_0 \cdots\omega_d = 1, \omega_i \in \{ -1, 1 \} \}
\), which has $2^{d}$ elements.
Moreover, $\Psi_{n_1,\ldots,n_d}$ is a proper map so that it is a $2^{d}$-sheeted smooth covering map \cite[p.~91--95]{Lee2013}.

Let $\mathbb{S}^+(\R^n) = \{ \vect{u} \in \mathbb{S}(\R^n) \;|\; u_1 > 0 \}$ be the ``upper'' half of the unit sphere; it is a submanifold in the subspace topology on $\R^n$. Let us define the following restriction of $\Psi$:
\begin{equation}\label{eqn_psi_star}
\Psi^*_{n_1,\ldots,n_d}: \R\setminus\{0\} \times \mathbb{S}^+(\R^{n_1}) \times \cdots \times \mathbb{S}^+(\R^{n_d}) \to \Var S, \;(\lambda, \vect{u}_1, \ldots, \vect{u}_d) \mapsto \lambda \vect{u}_1 \otimes \cdots \otimes \vect{u}_d.
\end{equation}
It follows from the foregoing that $\Psi^*_{n_1,\ldots,n_r}$ is a bijective local diffeomorphism onto its image, so it is a \textit{{(global)} diffeomorphism} onto its image. Let $\Var{S}^*_{n_1,\ldots,n_r}$ be the image of $\Psi^*_{n_1,\ldots,n_r}$:
\begin{equation} \label{def_phi_star}
\Var{S}^*_{n_1,\ldots,n_r} := \Psi^*_{n_1,\ldots,n_r}(\R\setminus\{0\} \times \mathbb{S}^+(\R^{n_1}) \times \cdots \times \mathbb{S}^+(\R^{n_d})).
\end{equation}
When it is clear from the context we drop the subscripts from $\Psi_{n_1,\ldots,n_d}$, $\Psi_{n_1,\ldots,n_d}^*$ and $\Var{S}_{n_1,\ldots,n_d}^*$. The foregoing explains part (v) in \cref{def_nice}: we wish to work with a parametrization of $\Var S$ that is a diffeomorphism, so we restrict ourselves to $\Var S^*$ and use $\Psi^*$. We will show in the proof of \cref{prop_nice} that $\Var{S}^*$ is open in the Zariski topology and, hence, open and dense in the Euclidean topology.

Finally, we consider the subset $S_{r;n}\subset (\mathbb{S}^+(\R^n))^{\times r}$ defined as
\begin{equation}\label{def_S}
S_{r;n} = \{(s_1,\ldots, s_r)\in  (\mathbb{S}^+(\R^n))^{\times r}\mid [s_1,\ldots,s_r]\in \R^{n\times r} \text{ has full rank}\}.
\end{equation}
Note that $S_{r;n}$ is an open submanifold, because the locus of points not satisfying the rank condition is closed in the Zariski topology. We also have the following result.
\begin{lemma}\label{lemma_quotient1}
Let $\mathfrak{S}_r$ be the symmetric group on $r$ elements. Then, $\widehat{S}_{r;n} := S_{r;n}/\mathfrak{S}_r$ is a manifold. {Moreover, the projection $\pi : S_{r;n} \to \widehat{S}_{r;n}, (x_1,\ldots,x_r) \mapsto \{x_1, \ldots, x_r\}$ is a local diffeomorphism.}
\end{lemma}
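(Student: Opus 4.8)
The plan is to identify $\widehat{S}_{r;n}$ with the orbit space of the permutation action of the finite group $\mathfrak{S}_r$ on $S_{r;n}$, given by $g\cdot(s_1,\ldots,s_r):=(s_{g^{-1}(1)},\ldots,s_{g^{-1}(r)})$, and to invoke the Quotient Manifold Theorem (see, e.g., \cite[Theorem~21.10]{Lee2013}). This action is the restriction of a linear action on $(\mathbb{S}^+(\R^{n}))^{\times r}$, hence smooth, and it carries $S_{r;n}$ into itself because permuting the columns of $[s_1,\ldots,s_r]$ leaves its rank unchanged. Since every element of $S_{r;n}$ is an $r$-tuple of \emph{distinct} vectors (as the freeness argument below shows), the orbit space $S_{r;n}/\mathfrak{S}_r$ is canonically identified with the collection of unordered $r$-element sets $\{x_1,\ldots,x_r\}$, and under this identification the quotient map is precisely the map $\pi$ of the statement.

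First I would check the three hypotheses of the theorem. Smoothness of the action is clear from the above. Properness is automatic for a finite group: the preimage of a compact set $K\times K$ under the map $\mathfrak{S}_r\times S_{r;n}\to S_{r;n}\times S_{r;n}$, $(g,x)\mapsto(g\cdot x,x)$, equals $\bigcup_{g\in\mathfrak{S}_r}\{g\}\times(K\cap g^{-1}K)$, a finite union of compact sets. The only step that genuinely uses the definition of $S_{r;n}$ is \emph{freeness}: if $g\cdot(s_1,\ldots,s_r)=(s_1,\ldots,s_r)$, then $s_{g(i)}=s_i$ for all $i$, so every orbit of $g$ on $\{1,\ldots,r\}$ indexes a block of mutually equal columns of $[s_1,\ldots,s_r]$; since this matrix has full rank its columns are pairwise distinct, and therefore $g=\id$.

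With smoothness, properness, and freeness in hand, the Quotient Manifold Theorem endows $\widehat{S}_{r;n}=S_{r;n}/\mathfrak{S}_r$ with a smooth manifold structure of dimension $\dim S_{r;n}-\dim\mathfrak{S}_r=\dim S_{r;n}$, for which the quotient map $\pi$ is a smooth submersion. As its domain and codomain now have equal dimension, $\mathrm{d}_x\pi$ is a linear isomorphism at every point $x$, so the inverse function theorem shows that $\pi$ is a local diffeomorphism; equivalently, a smooth free and proper action of a discrete group has a quotient map that is a smooth (normal) covering.

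The only obstacle worth flagging is the freeness verification, and even that boils down to the elementary remark that no nontrivial permutation fixes a tuple with pairwise distinct entries — which is exactly why one restricts to $S_{r;n}$, where the full-rank condition supplies that distinctness, rather than working over all of $(\mathbb{S}^+(\R^{n}))^{\times r}$. Everything else is a routine application of standard differential geometry. Alternatively, one could argue by hand, covering $S_{r;n}$ by $\mathfrak{S}_r$-translates of open sets on which $\pi$ is injective and transporting the coordinate charts of $S_{r;n}$ down through $\pi$.
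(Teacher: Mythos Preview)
Your proposal is correct and follows essentially the same approach as the paper: both invoke the Quotient Manifold Theorem \cite[Theorem~21.10]{Lee2013} after checking that the $\mathfrak{S}_r$-action on $S_{r;n}$ is smooth, free (via the full-rank condition forcing distinct columns), and proper. The only minor difference is that the paper appeals to \cite[Lemma~21.11]{Lee2013} for properness, whereas you give the direct finite-group argument, and you spell out the local-diffeomorphism conclusion via the dimension count more explicitly than the paper does.
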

\begin{proof}
$\mathfrak{S}_r$ is a discrete Lie group acting smoothly \cite[Example 7.22(e)]{Lee2013}. The group action is also free because $S \in S_{r;n}$ can be a fixed point of some permutation only if $s_i, s_j \in S$ with $i \ne j$ are equal. It can be verified that the conditions in \cite[Lemma 21.11]{Lee2013} hold, so that the action is proper. The result follows by the quotient manifold theorem \cite[Theorem 21.10]{Lee2013}.
\end{proof}

\subsection{Differential geometry of $r$-nice tensors} \label{sec_sec_diff_geom_nice}
Recall that a Segre manifold $\Var{S} \subset \R^{n_1 \times \cdots \times n_d}$ is said to be \emph{generically $r$-identifiable} if all tensors in a Zariski-open subset of
\(
\overline{\sigma_r}
\)
are identifiable; see \cite{COV2014,COV2017} for the state of the art.
In the context of PBAs, the following standard result suffices.

\begin{lemma} \label{lem_identifiable}
Let $n_1 \ge n_2 \ge \cdots \ge n_d \ge 2$. If $r \le n_2$, then $\Var{S}_{n_1,\ldots,n_d}$ is generically $r$-identifiable.
\end{lemma}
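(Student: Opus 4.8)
The plan is to reduce the statement to the classical Jennrich-type uniqueness argument, which is exactly the prototypical PBA presented in the introduction, now read as a proof of identifiability rather than as an algorithm. Concretely, I would take a generic rank-$r$ tensor $\tensor{A} = \sum_{i=1}^r \vect{a}_i \otimes \vect{b}_i \otimes \vect{c}_i$ with $r \le n_2 \le n_1$, and show that for generic choices of the factor vectors the CPD is unique. Since identifiability of $\tensor{A}$ only depends on $\{\tensor{A}_1,\dots,\tensor{A}_r\}$, and since the non-identifiable locus is contained in a proper Zariski-closed subset once we exhibit one identifiable tensor (the set of identifiable tensors, or more precisely its preimage under $\Phi_r$, is constructible and dominates $\overline{\sigma_r}$ as soon as it is nonempty and the fibre over a generic point is finite), it suffices to produce a single parameter point where the Jennrich construction goes through.

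The key steps: first, assume the factor matrices $A = [\vect a_i] \in \R^{n_1\times r}$, $B = [\vect b_i]\in\R^{n_2\times r}$ are of full column rank $r$ (generic since $r\le n_2\le n_1$), and that $C = [\vect c_i]$ has Kruskal rank at least $2$, which is generic as soon as $n_3\ge 2$. Second, pick two generic vectors $\vect q_1, \vect q_2 \in \R^{n_3}$ and form the two contracted slices $M_j = A\,\diag(\boldsymbol\mu_j)\,B^T \in \R^{n_1\times n_2}$, where $\boldsymbol\mu_j = [\langle \vect q_j, \vect c_i\rangle]_{i=1}^r$; compressing to an $r\times r$ problem via the column spaces of $A$ and $B$ (exactly the orthogonal Tucker step in the introduction) yields a nonsingular pencil $(S_1,S_2)$ with $S_1 S_2^{-1} = X\,\diag(\boldsymbol\mu_1)\diag(\boldsymbol\mu_2)^{-1} X^{-1}$. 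Third, argue that for generic $\vect q_1,\vect q_2$ the ratios $\mu_{1,i}/\mu_{2,i}$ are pairwise distinct (a nonvanishing-polynomial condition in the $\vect q_j$, using $k_C\ge 2$ so that no two columns of $C$ are proportional), so the eigenvalues of the pencil are simple and $X$, hence $A = Q_1 X$, is determined up to scaling and permutation; then $B$ and $C$ are recovered from $A$ via the $1$-flattening identity $\tensor A_{(1)} = A(B\odot C)^T$ together with the full column rank of $A$. This shows the chosen tensor is $r$-identifiable, and genericity of all the conditions used gives generic $r$-identifiability.

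I would present this cleanly by invoking the well-established reference for exactly this argument — it is the $d=3$ case of the classical result going back to Harshman/Leurgans--Ross--Abel and is recorded, e.g., in \cite{COV2014,COV2017}, and the specific bound $r\le n_2$ is standard — so that the lemma can be cited rather than reproved in full; the paper only needs it as a black box for PBAs. If a self-contained argument is wanted, the steps above are routine once the genericity bookkeeping is set up.

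The main obstacle, such as it is, is not mathematical depth but the passage from ``one identifiable tensor exists'' to ``a Zariski-dense set of tensors in $\overline{\sigma_r}$ is identifiable.'' This requires knowing that the identifiable locus is a constructible set (true: it is definable, cf.\ the semi-algebraic/constructible remarks around \refeqn{def_sigma}) and that it is nonempty, so that its complement inside $\overline{\sigma_r}$ is contained in a proper closed subvariety; care is needed only in that ``generic $r$-identifiability'' in \cref{lem_identifiable} is phrased over $\R$, so one should either argue directly over $\R$ — the polynomial conditions above (full rank of $A$, $B$; $k_C\ge 2$; distinct eigenvalue ratios) cut out a nonempty Euclidean-open set because they are satisfied on an explicit open set of parameters — or pass through the complexification and use that a real Zariski-dense set meets every Euclidean-open set. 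Either way this is standard and short.
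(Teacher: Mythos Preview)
Your argument is correct for $d=3$ but takes a different route from the paper. The paper's proof is a one-liner: it invokes Kruskal's criterion (\cref{kruskal}) and its effectiveness via \cite{COV2017}. For generic factor matrices with $r\le n_2\le n_1$ one has $k_A=k_B=r$ and $k_C\ge\min(r,n_3)\ge 2$, so Kruskal's inequality $r\le\tfrac12(k_A+k_B+k_C-2)$ is satisfied; this is shorter than your Jennrich-type reconstruction and avoids the genericity bookkeeping around the pencil. Your approach, on the other hand, is constructive---it actually exhibits how to recover the decomposition---and nicely mirrors the prototypical PBA from the introduction, which is thematically appealing here. One gap: the lemma is stated for arbitrary order $d\ge 3$, while your argument as written only treats $d=3$. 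To close it you would need to reshape into $\R^{n_1}\otimes\R^{n_2}\otimes(\R^{n_3}\otimes\cdots\otimes\R^{n_d})$, apply your pencil argument to recover $\vect a_i$, $\vect b_i$, and the rank-$1$ tensors $\vect c_i^{(3)}\otimes\cdots\otimes\vect c_i^{(d)}$, and then note that a rank-$1$ tensor determines its factors up to scaling; this is routine but should be mentioned. Kruskal's criterion (in its higher-order form) handles general $d$ without this extra step.
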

\begin{proof}
This is follows, for example, from the effectiveness of Kruskal's criterion; see \cite{COV2017}.
\end{proof}

Next, we prove an important property of the set $\Var M_{r;n_1,\ldots,n_d}$ from \cref{def_nice}.

\begin{prop} \label{prop_nice}
Let $\Var{S}_{n_1,\ldots,n_d}$ be generically $r$-identifiable. Then,
$\Var{M}_{r;n_1,\ldots,n_d}$ is a Zariski-open submanifold of $ \Var{S}_{n_1,\ldots,n_d}^{\times r}$.
\end{prop}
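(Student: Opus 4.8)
The plan is to verify that each of the five defining conditions of $\Var{M}_{r;n_1,\ldots,n_d}$ in \cref{def_nice} either defines a Zariski-open subset of $\Var{S}_{n_1,\ldots,n_d}^{\times r}$, or holds on a Zariski-open subset thanks to the genericity hypothesis, and then to intersect. Since a finite intersection of Zariski-open sets is Zariski-open, and since $\Var{S}_{n_1,\ldots,n_d}^{\times r}$ is itself a smooth manifold, an open submanifold of it is again a manifold; so the only real work is the openness of each condition. I would treat them one at a time. Conditions (iv) SGLP and (v) nonvanishing $(1,\ldots,1)$-entry are the easiest: SGLP is the non-vanishing of finitely many minors of the factor matrices (polynomial functions of the tuple $\tuple{a}$ on the parametrization, hence Zariski-locally polynomial, and in any case the complement is Zariski-closed), and (v) is the non-vanishing of a single coordinate function; both cut out Zariski-open loci, nonempty because the parameters are generic.

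Next I would handle (i), the smoothness of $\Phi_r(\tuple{a})$ as a point of $\overline{\sigma_r}$. The singular locus $\operatorname{Sing}(\overline{\sigma_r})$ is a proper Zariski-closed subset of $\overline{\sigma_r}$ (a standard fact for any algebraic variety, \cite{BCR1998}), so its preimage under the polynomial map $\Phi_r$ is Zariski-closed in $\Var{S}^{\times r}$; hence (i) is Zariski-open. It is nonempty provided $\Phi_r$ is dominant onto a component whose generic point is smooth, which is automatic. Condition (iii), finiteness of the condition number, is where I would invoke the spectral formula \refeqn{useful_matrix}: $\kappa(\tuple{a}) = \varsigma_{\min}([U_1\ \cdots\ U_r])^{-1}$, so $\kappa(\tuple a) < \infty$ exactly when $\deriv{\Phi_r}{\tuple a}$ has full rank, i.e. $r(1+\sum_k(n_k-1))$. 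This is a maximal-rank condition on the Jacobian of a polynomial map, hence the non-vanishing of at least one maximal minor — again a Zariski-open condition. Nonemptiness here requires that $\Phi_r$ is generically of maximal rank, which holds because $\sigma_r$ has the expected dimension in the identifiable range (and is in any case implied by generic $r$-identifiability, by the arguments already used in \cite{BV2017,COV2014}).

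The remaining and subtlest condition is (ii), $r$-identifiability. This is exactly where the hypothesis ``$\Var{S}_{n_1,\ldots,n_d}$ is generically $r$-identifiable'' enters: by definition it says that the identifiable tensors form a Zariski-dense (hence Zariski-open, or at least containing a Zariski-open dense subset) subset $\Var{U}$ of $\overline{\sigma_r}$. The point I expect to require the most care is that $r$-identifiability of a tensor does not a priori define a Zariski-open subset of $\overline{\sigma_r}$ — identifiability is not obviously an algebraically constructible condition. The standard workaround, which I would follow, is to observe that by generic $r$-identifiability there exists a Zariski-open dense $\Var{U} \subseteq \overline{\sigma_r}$ all of whose points are identifiable and have rank $r$; then $\Phi_r^{-1}(\Var{U})$ is Zariski-open in $\Var{S}^{\times r}$ and each of its points satisfies (ii). One must then check that $\Phi_r^{-1}(\Var{U})$ is dense (equivalently nonempty) in $\Var{S}^{\times r}$: this follows because $\Phi_r$ is dominant onto $\overline{\sigma_r}$ and $\Var{U}$ is dense, so its preimage meets a dense open set. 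Intersecting the five Zariski-open sets produces a Zariski-open subset of $\Var{S}^{\times r}$; nonemptiness of the intersection follows because each is dense and $\Var{S}^{\times r}$ is irreducible (a product of irreducible affine cones over smooth projective varieties). Finally, a Zariski-open subset of the analytic manifold $\Var{S}_{n_1,\ldots,n_d}^{\times r}$ is an open submanifold in the Euclidean topology, which is the assertion. The main obstacle, as noted, is justifying that the identifiability condition (ii) can be replaced by membership in a genuinely Zariski-open locus; everything else is a routine "complement of a zero set of polynomials" argument.
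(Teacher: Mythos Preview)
Your proposal is correct and follows essentially the same approach as the paper: show that the complement of each of the five conditions in \cref{def_nice} is (contained in) a proper Zariski-closed subset of $\Var{S}^{\times r}$, then intersect. The paper handles (iii) and (iv) by citing \cite[Lemma~5.3]{BV2018b} and \cite[Lemma~4.4]{COV2017} respectively, whereas you spell out the underlying maximal-minor arguments directly; and you are more explicit than the paper about the subtlety in (ii), where the hypothesis only guarantees a Zariski-open set \emph{contained in} the identifiable locus rather than equality---the paper glosses over this with ``closed in the Zariski topology by assumption.''
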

\begin{proof}
Let $\Var{S} = \Var{S}_{n_1,\ldots,n_d}$ and $\Var{M} = \Var{M}_{r;n_1,\ldots,n_d}$ for brevity. We show that the set of tuples \textit{not} satisfying either of the conditions in \cref{def_nice} is contained in a union of five Zariski-closed subsets of $\Var{M}$; these subsets are denoted by $\Var B_{\mathrm{(i)}}, \Var B_{\mathrm{(ii)}}, \Var B_{\mathrm{(iii)}}, \Var B_{\mathrm{(iv)}}$ and $\Var B_{\mathrm{(v)}}$. Taking
$$
\Var{M}= \Var S^{\times r} \backslash \left(\Var B_{\mathrm{(i)}} \cup \Var B_{\mathrm{(ii)}} \cup \Var B_{\mathrm{(iii)}} \cup \Var B_{\mathrm{(iv)}} \cup \Var B_{\mathrm{(v)}}\right)
$$
would then prove the assertion.

Recall that generic $r$-identifiability implies \textit{nondefectivity} of $\sigma_r$; see \cite[Chapter 5, specifically Corollary 5.3.1.3]{Landsberg2012}. Hence, $\dim \sigma_r = \dim \overline{\sigma_r} = \dim \Var{S}^{\times r} = r \dim \Var{S}$. The subvariety $\Sigma \subset \overline{\sigma_r}$ of singular points is proper and closed in the Zariski topology by definition \cite{Harris1992}.
This means that in addition to the polynomials that vanish on the $\R$-variety $\overline{\sigma_r}$, there are $k \ge 1$ additional nontrivial polynomial equations with coefficients over $\R$ such that $f_1(y) = \cdots = f_k(y) = 0$ for all $y \in \Sigma$. If $y$ has a preimage $x\in\Var{S}^{\times r}$ under $\Phi_r$, then $f_1(\Phi_r(x)) = \cdots = f_k(\Phi_r(x)) = 0$. Hence, the locus $\Var{B}_{\mathrm{(i)}}$ of decompositions not satisfying condition (i) in \cref{def_nice}, which map into the singular locus $\Sigma$ under $\Phi_r$ is a Zariski-closed set. It is also a proper subset, because otherwise $\Phi_r(\Var{S}^{\times r}) = \sigma_r \subset \Sigma$, which is a contradiction as $\dim \Sigma < \dim \overline{\sigma_r} = \dim \sigma_r$.

The set of tensors in $\overline{\sigma_r}$ with several decompositions is closed in the Zariski topology by assumption. We can apply the same argument as in the previous paragraph to conclude that the variety of decompositions $\Var{B}_{\mathrm{(ii)}} \subset \Var{S}^{\times r}$ that map to points of $\overline{\sigma_r}$ that are not $r$-identifiable is a proper Zariski closed subset in $\Var{S}^{\times r}$.

The subset $\Var{B}_{\mathrm{(iii)}} \subset \Var{S}^{\times r}$ of decompositions with condition number $\infty$, is contained in a Zariski-closed set if the $r$-secant variety $\overline{\sigma_r}$ is nondefective by \cite[Lemma 5.3]{BV2018b}.

The set of points $\Var{B}_{\mathrm{(iv)}} \subset \Var{S}^{\times r}$  not satisfying (iv) is Zariski-closed by \cite[Lemma 4.4]{COV2017}.

For the last point, observe that condition (v) of \cref{def_nice} is equivalent to $p\in (\Var S^*)^{\times r}$. By definition of $\Var{S}^*$ in \cref{def_phi_star}, the set of points in $\Var{S}\setminus\Var{S}^*$ is the intersection of $\Var{S}$ with the union of the following linear varieties:
\(
 L_k = \R^{n_1} \otimes \cdots \otimes \R^{n_{k-1}} \otimes \R^{n_k}/\langle \vect{e}_1 \rangle \otimes \R^{n_{k+1}} \otimes \cdots \otimes \R^{n_d},
\)
where $\R^{n_k}/\langle \vect{e}_1\rangle = \langle \vect{e}_2, \ldots, \vect{e}_{n_k} \rangle$ and $\vect{e}_i$ is the $i$th standard basis vector of $\R^{n_k}$. In fact,
\[{
 \Var{S}\setminus\Var{S}^* = \Var{S} \cap \left( \bigcup_{k=1}^d L_k \right) = \bigcup_{k=1}^d ( \Var{S} \cap L_k ) \simeq \bigcup_{k=1}^d \Var{S}_{n_1,\ldots,n_{k-1},n_k-1,n_{k+1},\ldots,n_d},
}\]
which is thus a Zariski-closed set {because $\dim \Var{S}_{n_1,\ldots,n_{k-1},n_k-1,n_{k+1},\ldots,n_d} < \dim \Var{S}$}. Therefore, taking
\(
\Var{B}_{\mathrm{(v)}} = \bigcup_{i=1}^r \Var{S}^{\times (i-1)} \times (\Var{S}\setminus\Var{S}^*) \times \Var{S}^{\times (r-i)}
\)
yields the Zariski-closed variety of points not satisfying (v). This concludes the proof.
\end{proof}
The definition of $\Var M_{r;n_1,\ldots,n_d}$ is nice, because the addition map $\Phi_r$ from \refeqn{def_Phi} restricted to $\Var M_{r;n_1,\ldots,n_d}$ is a local diffeomorphism. However, we wish to work with global diffeomorphisms and therefore need the following proposition.

\begin{prop} \label{prop_permutation}
If $\Var{S}_{n_1,\ldots,n_d}$ is generically $r$-identifiable, then $\widehat{\Var{M}}_{r;n_1,\ldots,n_d} = \Var{M}_{r;n_1,\ldots,n_d}/\mathfrak{S}_r$ is a manifold. Moreover, the projection $\widehat{\pi} : \Var{M}_{r;n_1,\ldots,n_d} \to \widehat{\Var{M}}_{r;n_1,\ldots,n_d}, (\tensor{A}_1,\ldots,\tensor{A}_r) \mapsto \{\tensor{A}_1, \ldots, \tensor{A}_r\}$ is a local diffeomorphism.
\end{prop}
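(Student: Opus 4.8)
The plan is to follow the template of \cref{lemma_quotient1}: we let $\mathfrak{S}_r$ act on $\Var{M} := \Var{M}_{r;n_1,\ldots,n_d}$ by permuting the $r$ rank-$1$ factors of a tuple and then apply the quotient manifold theorem \cite[Theorem 21.10]{Lee2013}. The first thing to check is that the obvious smooth $\mathfrak{S}_r$-action on $\Var{S}_{n_1,\ldots,n_d}^{\times r}$ restricts to an action on $\Var{M}$; this holds because each of the five conditions in \cref{def_nice} is invariant under permuting the $\tensor{A}_i$. Indeed, conditions (i) and (ii) only involve $\Phi_r(\tuple{a}) = \tensor{A}_1 + \cdots + \tensor{A}_r$, which is symmetric in its arguments; condition (iii) depends only on the \emph{unordered} decomposition by the spectral characterization of the condition number \cite[Theorem 1.1]{BV2017}; condition (iv) is a statement about \emph{every} subset of the factors and is therefore symmetric; and condition (v) is a constraint imposed on each $\tensor{A}_i$ separately. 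Since $\Var{M}$ is a Zariski-open submanifold of $\Var{S}_{n_1,\ldots,n_d}^{\times r}$ by \cref{prop_nice}, the restricted action is again smooth.

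Next I would verify that this action is free and proper. Freeness: if $P_\pi \cdot \tuple{a} = \tuple{a}$ for some $\pi \in \mathfrak{S}_r$ and $\tuple{a} = (\tensor{A}_1,\ldots,\tensor{A}_r) \in \Var{M}$, then $\tensor{A}_{\pi(i)} = \tensor{A}_i$ for all $i$; were $\pi$ not the identity, some $\tensor{A}_i$ would equal $\tensor{A}_j$ with $i \ne j$, so $\Phi_r(\tuple{a})$ would be a sum of at most $r-1$ rank-$1$ tensors, contradicting that $\Phi_r(\tuple{a})$ has rank exactly $r$ by condition (ii) of \cref{def_nice}. Properness follows exactly as in the proof of \cref{lemma_quotient1}, by checking the criterion \cite[Lemma 21.11]{Lee2013} (alternatively, any smooth action of a finite group is automatically proper). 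The quotient manifold theorem \cite[Theorem 21.10]{Lee2013} then shows that $\widehat{\Var{M}}_{r;n_1,\ldots,n_d} = \Var{M}/\mathfrak{S}_r$ is a smooth manifold and that $\widehat{\pi}$ is a smooth submersion.

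Finally, I would upgrade ``smooth submersion'' to ``local diffeomorphism.'' Since $\mathfrak{S}_r$ is discrete and acts freely, every orbit is finite, hence zero-dimensional, so $\dim \widehat{\Var{M}}_{r;n_1,\ldots,n_d} = \dim \Var{M}$; a smooth submersion between manifolds of equal dimension is a local diffeomorphism by the inverse function theorem, which completes the proof. I do not expect any genuine obstacle here: the only point requiring slight care is the $\mathfrak{S}_r$-invariance of the five conditions defining $\Var{M}$, which is precisely what makes the quotient construction available and mirrors the discrete-group argument already used for \cref{lemma_quotient1}.
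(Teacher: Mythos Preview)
Your proposal is correct and follows essentially the same approach as the paper: the paper's proof simply says to ``combine the proof of \cref{lemma_quotient1} with the fact that $r$-identifiability implies that the rank-$1$ tensors in a decomposition $(\tensor{A}_1,\ldots,\tensor{A}_r)\in \Var{M}_{r;n_1,\ldots,n_d}$ are pairwise distinct,'' which is exactly your strategy of applying the quotient manifold theorem with freeness coming from pairwise distinctness. You supply more detail---in particular the explicit check that the five conditions of \cref{def_nice} are $\mathfrak{S}_r$-invariant and the dimension count upgrading the submersion to a local diffeomorphism---but the argument is the same.
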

\begin{proof}
Combine the proof of \cref{lemma_quotient1} with the fact that $r$-identifiability implies that the rank-1 tensors in a decomposition $(\tensor{A}_1,\ldots,\tensor{A}_r)\in \Var{M}_{r;n_1,\ldots,n_d}$ are pairwise distinct.
\end{proof}

It is clear that the addition map $\Phi_r$ is constant on $\mathfrak{S}_r$-orbits in $\Var{M}_{r;n_1,\ldots,n_d}$. Therefore, $\Phi_r$ is well defined on $\widehat{\Var{M}}_{r;n_1,\ldots,n_d}$.
Now, we have the following crucial result.

\begin{prop} \label{prop_image}
Let $\Var{N}_{r}^{n_1,\ldots,n_d}\subset \sigma_r$ be the set of $r$-nice tensors. If $\Var{S}_{n_1,\ldots,n_d}$ is generically $r$-identifiable, then
\[
\Phi_r: \widehat{\Var{M}}_{r;n_1,\ldots,n_d} \to \Var{N}_{r}^{n_1,\ldots,n_d},\; \{\tensor{A}_1,\ldots,\tensor{A}_r\} \mapsto  \tensor{A}_1+\cdots +\tensor{A}_r
\]
is a diffeomorphism.
Moreover, $\Var{N}_{r;n_1,\ldots,n_d}$ is an open dense submanifold of $\sigma_r$.
\end{prop}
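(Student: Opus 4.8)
The plan is to first show that the restriction $\Phi_r|_{\Var M}$, with $\Var M:=\Var M_{r;n_1,\ldots,n_d}$, is a local diffeomorphism onto an open subset of the smooth locus of $\overline{\sigma_r}$, and then to transport this to the quotient $\widehat{\Var M}$ using that $\widehat{\pi}$ is a local diffeomorphism (\cref{prop_permutation}). For the first part, recall that generic $r$-identifiability implies that $\sigma_r$ is nondefective (as used in the proof of \cref{prop_nice}), so $\dim\overline{\sigma_r}=\dim\sigma_r=r\dim\Var S$. Fix $\tuple a\in\Var M$ and put $y=\Phi_r(\tuple a)$. By condition (i) of \cref{def_nice}, $y$ lies in the smooth locus $\overline{\sigma_r}^{\,\mathrm{sm}}$ (the complement of the singular locus $\Sigma$), so some Euclidean neighbourhood $W\ni y$ meets $\overline{\sigma_r}$ in an embedded submanifold $W\cap\overline{\sigma_r}$ of dimension $r\dim\Var S$. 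Since $\Phi_r$ maps into $\overline{\sigma_r}$, curves through $\tuple a$ are sent to curves in $\overline{\sigma_r}$, so the image of $\deriv{\Phi_r}{\tuple a}$ is contained in $\Tang{y}{W\cap\overline{\sigma_r}}$. By condition (iii) and \eqref{useful_matrix}, finiteness of the condition number is equivalent to $\deriv{\Phi_r}{\tuple a}$ being injective, so its image has dimension $r\dim\Var S=\dim\Tang{y}{W\cap\overline{\sigma_r}}$; hence $\deriv{\Phi_r}{\tuple a}:\Tang{\tuple a}{\Var S^{\times r}}\to\Tang{y}{W\cap\overline{\sigma_r}}$ is an isomorphism. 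The inverse function theorem for manifolds then yields an open neighbourhood $V\ni\tuple a$ in $\Var M$ on which $\Phi_r$ is a diffeomorphism onto an open subset of $W\cap\overline{\sigma_r}$. In particular $\Phi_r|_{\Var M}$ is an open map with values in the manifold $\overline{\sigma_r}^{\,\mathrm{sm}}$ and is a local diffeomorphism.

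Next I would deduce the statement about $\Var N$. By the previous step, $\Var N=\Phi_r(\Var M)$ is an open subset of the manifold $\overline{\sigma_r}^{\,\mathrm{sm}}$, hence an embedded submanifold of $\R^{n_1\times\cdots\times n_d}$ of dimension $r\dim\Var S$. Because $\overline{\sigma_r}^{\,\mathrm{sm}}$ is Zariski-open in $\overline{\sigma_r}$, the set $\overline{\sigma_r}^{\,\mathrm{sm}}\cap\sigma_r$ is open in $\sigma_r$, and so $\Var N$ is open in $\sigma_r$. For density, note that $\Var S^{\times r}$ is irreducible (the $r$-fold product of the irreducible affine cone over the Segre variety) and, by \cref{prop_nice}, $\Var M$ is a nonempty Zariski-open subset of it, hence Euclidean-dense; since $\Phi_r$ is continuous, $\Var N=\Phi_r(\Var M)$ is dense in $\Phi_r(\Var S^{\times r})=\sigma_r$.

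Finally I would verify that the descended map is a diffeomorphism. As already observed, $\Phi_r$ is constant on $\mathfrak{S}_r$-orbits, so it descends to a map $\widehat{\Var M}\to\Var N$; since $\widehat{\pi}$ is a surjective local diffeomorphism (\cref{prop_permutation}), this map is smooth and, near any point, coincides with $\Phi_r|_{\Var M}\circ(\widehat{\pi}|_U)^{-1}$ for small enough $U$, hence is a local diffeomorphism by the first step. It is surjective by the definition $\Var N=\Phi_r(\Var M)$. It is injective because each $\tensor A\in\Var N$ is $r$-identifiable and has rank exactly $r$ (conditions (ii)): rank exactly $r$ forces the $r$ rank-one summands in a length-$r$ decomposition to be pairwise distinct (otherwise two equal summands could be merged, reducing the length), so two tuples of $\Var M$ with image $\tensor A$ share the same underlying $r$-element set and thus represent the same point of $\widehat{\Var M}$. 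A bijective local diffeomorphism is a diffeomorphism, which completes the argument.

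I expect the first step to be the main obstacle: the difficulty is that $\sigma_r$ need not be locally a manifold nor locally closed in $\R^{n_1\times\cdots\times n_d}$, so one cannot apply the inverse function theorem to $\Phi_r$ with target $\sigma_r$ directly. The resolution is to use condition (i) to land in the smooth locus $\overline{\sigma_r}^{\,\mathrm{sm}}$, combine nondefectivity (to match $\dim\Var S^{\times r}$ with $\dim\overline{\sigma_r}$) with the spectral identity \eqref{useful_matrix} (so that a finite condition number gives an injective, hence bijective, differential between tangent spaces of equal dimension), and only then invoke the inverse function theorem. Once this is in place, the remaining steps are essentially formal manipulations with quotients and Zariski-open sets.
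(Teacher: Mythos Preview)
Your proof is correct and follows essentially the same strategy as the paper's: both use nondefectivity to match dimensions, the finite condition number (via \eqref{useful_matrix}) to get an injective differential, $r$-identifiability for global injectivity on the quotient, and Zariski-openness of $\Var M$ in $\Var S^{\times r}$ for density. The differences are mainly presentational. You invoke the inverse function theorem directly after explicitly identifying the target as the smooth locus $\overline{\sigma_r}^{\,\mathrm{sm}}$ (making the role of condition~(i) transparent), whereas the paper packages the same step via Lee's criterion that an injective immersion between manifolds of equal dimension is a smooth embedding. Your density argument---$\Var M$ is Euclidean-dense in $\Var S^{\times r}$ and $\Phi_r$ is continuous, hence $\Phi_r(\Var M)$ is dense in $\Phi_r(\Var S^{\times r})=\sigma_r$---is a bit slicker than the paper's, which builds explicit approximating sequences of decompositions; both ultimately rest on \cref{prop_nice}.
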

\begin{proof}
As before, for brevity, we drop all subscripts. Let $\tuple{a}=(\tensor{A}_1,\ldots,\tensor{A}_r)\in \Var{M}$. By definition, $\tuple{a}$ has a finite condition number. This means, by \cite[Theorem 1.1]{BV2017}, that the derivative of $\Phi_r$ at $\tuple{a}$ is injective. Hence, $\Phi_r$ is a smooth immersion \cite[p. 78]{Lee2013}. By the generic $r$-identifiability assumption, it follows that the $r$-secant variety $\overline{\sigma_r}$ is not defective so that $\dim \overline{\sigma_r} = r \dim \Var{S}$. Moreover, by \cref{prop_nice}, we have $r \dim \Var{S} = \dim \Var{M}_{r;n_1,\ldots,n_d}$ and, by construction, we have $\dim \Var{M}_{r;n_1,\ldots,n_d} = \dim \widehat{\Var{M}}_{r;n_1,\ldots,n_d}$. As $\Phi_r$ is injective by generic $r$-identifiability and by having taken the particular quotient in \cref{prop_permutation}, then \cite[Proposition 4.22(d)]{Lee2013} entails that $\Phi_r$ is a smooth embedding. The first conclusion follows by \cite[Proposition 5.2]{Lee2013}.

The foregoing already shows that $\Var{N}_{r;n_1,\ldots,n_d} \subset \sigma_r$ is open. We show that it is dense. Let $\tensor{A} \in \sigma_r \setminus \Var{N}_{r;n_1,\ldots,n_d}$ with decomposition $\tensor{A} = \Phi_r(\tuple{a})= \tensor{A}_1 + \cdots + \tensor{A}_r$. By \cref{prop_nice}, there exist a sequence
\[
 ( \tensor{A}_1^{(j)}, \ldots, \tensor{A}_r^{(j)} ) \in \Var{M}_{r;n_1,\ldots,n_d} \text{ such that } \lim_{j \to \infty}  ( \tensor{A}_1^{(j)}, \ldots, \tensor{A}_r^{(j)} ) \to ( \tensor{A}_1, \ldots, \tensor{A}_r ).
\]
Note that this is convergence in the usual Euclidean topology that $\Var{M}_{r;n_1,\ldots,n_d}$ inherits from the ambient space $(\R^{n_1 \times \cdots \times n_d})^{\times r}$. Consequently, the components also converge individually: $
 \lim_{j\to\infty} \tensor{A}_i^{(j)} \to \tensor{A}_i$, $i = 1, \ldots, r.
$
The result follows from the fact that adding the above convergent sequences results in a convergent sequence in $\Var{N}_{r;n_1,\ldots,n_d}$ with limit $\tensor{A}$. Hence, $\tensor{A} \in \overline{ \Var{N}_{r;n_1,\ldots,n_d} }$ so that $\Var{N}_{r;n_1,\ldots,n_d}$ is dense in $\sigma_r$, concluding the proof.
\end{proof}

From Proposition \ref{prop_image}, $\Phi_r$ has an smooth inverse, which solves the TDP on $\Var{N}_{r;n_1,\ldots,n_d} \subset \R^{n_1 \times \cdots \times n_d}$. We finally arrive at the goal of this section.
\begin{dfn}\label{Phi_inverse}
The inverse of $\Phi_r$ on the manifold of $r$-nice tensors is
\begin{equation*}
 \tau_{r;n_1,\ldots,n_d} : \Var{N}_{r;n_1,\ldots,n_d} \to \widehat{\Var{M}}_{r;n_1,\ldots,n_d},\; \tensor{A}_1+\cdots+\tensor{A}_r \mapsto \{ \tensor{A}_1, \ldots, \tensor{A}_r \}.
\end{equation*}
We call this mapping the \emph{tensor decomposition map}.
\end{dfn}

\begin{rem}\label{rem_local_identification}
One way to interpret the construction in this section is that near $\tensor{A} \in \Var{N}_{r;n_1,\ldots,n_d}$ we locally have the identification
\(
 \tau_{r;n_1,\ldots,n_d} = \widehat{\pi} \circ \Phi_{\tuple{a}}^{-1},
\)
where $\tuple{a}=(\tensor{A}_1,\ldots,\tensor{A}_r)$ is any ordered $r$-nice decomposition of $\tensor{A}$, $\Phi_{\tuple{a}}^{-1}$ is the local inverse in \cref{def_kappa}, and $\widehat{\pi}$ is as in \cref{prop_permutation}.
\end{rem}

\subsection{Implications for the condition number} \label{sec_implications_cn}
Let $\tuple{a} = (\tensor{A}_1,\ldots,\tensor{A}_r)$ be any ordered $r$-nice decomposition in $\Var{M}_{r;n_1,\ldots,n_d}$.
For the $r$-nice tensor $\tensor{A} = \tensor{A}_1 + \cdots + \tensor{A}_r \in \Var{N}_{r;n_1,\ldots,n_d}$, we will relate the condition number $\kappa[\tau_{r; n_1, \ldots, n_d}](\tensor{A})$, as in \cref{def_kappa_general}, to the condition number of the CPD $\kappa(\tensor{A}_1,\ldots,\tensor{A}_r)$ from \cite{BV2017}. We have the following result.

\begin{lemma} \label{lem_kappa_identity}
Let us choose the Riemannian metrics on $\Var{N}_{r;n_1,\ldots,n_d}$ and $\Var{M}_{r;n_1,\ldots,n_d}$ inherited from their respective ambient spaces. Then, the mapping $\widehat{\pi}$ from \cref{prop_permutation} induces a natural Riemannian metric on $\widehat{\Var{M}}_{r;n_1,\ldots,n_d}$ with the following properties:
\begin{enumerate}
  \item $\widehat{\pi}$ is a local isometry;
  \item for all $\tensor{A} = \tensor{A}_1 + \cdots + \tensor{A}_r \in \Var{N}_{r;n_1,\ldots,n_d}$, we have
  \(
  \kappa(\tensor{A}_1,\ldots,\tensor{A}_r)
  = \kappa[\tau_{r;n_1,\ldots,n_d}](\tensor{A})
  \); and
  \item for any $\{\tensor{A}_1, \ldots, \tensor{A}_r\},\{\tensor{B}_1, \ldots, \tensor{B}_r\}\in \widehat{\Var{M}}$ we have
  \[
  \mathrm{dist}_{\widehat{\Var{M}}}(\{\tensor{A}_1, \ldots, \tensor{A}_r\},\{\tensor{B}_1, \ldots, \tensor{B}_r\})=\min_{\pi \in \mathfrak{S}_r}\left(	\mathrm{dist}_{\Var{M}}((\tensor{A}_1,\ldots,\tensor{A}_r),\pi(\tensor{B}_1,\ldots,\tensor{B}_r))\right).
  \]
\end{enumerate}
Here, $\mathrm{dist}_{\widehat{\Var{M}}}$ and $\mathrm{dist}_{\Var{M}}$ are the respective Riemannian distances.
\end{lemma}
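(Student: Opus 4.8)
The plan is to treat the three items in order; all of them are routine differential geometry once the metric on $\widehat{\Var{M}}$ has been pinned down, so I drop the subscripts $r;n_1,\ldots,n_d$ throughout. For item (1), I would first note that $\mathfrak{S}_r$ acts on $(\R^{n_1\times\cdots\times n_d})^{\times r}$ by permuting the $r$ tensor slots, which is a permutation of orthonormal coordinate blocks and hence an isometry of the ambient Euclidean (Frobenius) structure; restricted to the $\mathfrak{S}_r$-invariant submanifold $\Var{M}$ with the inherited metric, the action is by isometries. Since $\widehat{\pi}$ is a local diffeomorphism (\cref{prop_permutation}), I then \emph{define} the metric $g$ on $\widehat{\Var{M}}$ at a point $\bar{p}$ by choosing any preimage $p\in\widehat{\pi}^{-1}(\bar p)$ and setting
\[
  g_{\bar p}(u,v) := \bigl\langle (\deriv{\widehat{\pi}}{p})^{-1}u,\; (\deriv{\widehat{\pi}}{p})^{-1}v \bigr\rangle_p , \qquad u,v\in\Tang{\bar p}{\widehat{\Var{M}}}.
\]
Independence of the chosen preimage is the only thing to check: two preimages differ by some permutation matrix $P_\sigma$, the identity $\widehat{\pi}\circ P_\sigma = \widehat{\pi}$ forces $(\deriv{\widehat{\pi}}{P_\sigma p})^{-1} = \deriv{(P_\sigma)}{p}\circ(\deriv{\widehat{\pi}}{p})^{-1}$, and $P_\sigma$ is an isometry. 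Smoothness is checked locally, where $\widehat{\pi}$ is a diffeomorphism and $g$ is the push-forward of a smooth metric. By construction every $\deriv{\widehat{\pi}}{p}$ is then a linear isometry, which is item (1).

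For item (2), by \cref{rem_local_identification} we have $\tau = \widehat{\pi}\circ\Phi_{\tuple{a}}^{-1}$ near an $r$-nice tensor $\tensor{A}$, so $\deriv{\tau}{\tensor{A}} = \deriv{\widehat{\pi}}{\tuple{a}}\circ(\deriv{\Phi_r}{\tuple{a}})^{-1}$, where $\deriv{\Phi_r}{\tuple{a}}\colon\Tang{\tuple{a}}{\Var{S}^{\times r}}\to\Tang{\tensor{A}}{\Var{N}}$ is a linear isomorphism because $\Phi_r$ restricts to a diffeomorphism on $\widehat{\Var{M}}$ (\cref{prop_image}) and $\widehat{\pi}$ is a local diffeomorphism. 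Since $\Var{M}$ is open in $\Var{S}^{\times r}$ and $\Var{N}$ is open in $\sigma_r$, the tangent spaces and Frobenius norms appearing here are exactly those used in \eqref{useful_matrix}. Taking operator norms and using that $\deriv{\widehat{\pi}}{\tuple{a}}$ is a linear isometry by item (1) then yields
\[
  \kappa[\tau](\tensor{A}) = \bigl\|(\deriv{\Phi_r}{\tuple{a}})^{-1}\bigr\| = \frac{1}{\varsigma_{\min}(\deriv{\Phi_r}{\tuple{a}})} = \kappa(\tensor{A}_1,\ldots,\tensor{A}_r),
\]
the last equality being \eqref{useful_matrix}.

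For item (3), the crucial structural point is that $\widehat{\pi}$ is not merely a local diffeomorphism but a genuine (normal) smooth covering map: as in \cref{lemma_quotient1} and \cref{prop_permutation}, $\mathfrak{S}_r$ is a discrete group acting smoothly, freely and properly on $\Var{M}$, so $\widehat{\pi}$ is a covering, and together with item (1) it is a Riemannian covering. Given this, I would fix the lift $p_0=(\tensor{A}_1,\ldots,\tensor{A}_r)$ and observe that $\widehat{\pi}^{-1}(\{\tensor{B}_1,\ldots,\tensor{B}_r\})$ is precisely the orbit $\{\pi(\tensor{B}_1,\ldots,\tensor{B}_r)\mid\pi\in\mathfrak{S}_r\}$. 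For one inequality, every curve in $\widehat{\Var{M}}$ joining the two classes lifts uniquely, by path lifting for covering maps, to a curve in $\Var{M}$ starting at $p_0$ and ending somewhere in that orbit, of equal length since $\widehat{\pi}$ is a local isometry; for the other, every curve in $\Var{M}$ from $p_0$ to a point of the orbit projects to a curve of equal length in $\widehat{\Var{M}}$. Passing to infima over curves gives the distance identity of item (3).

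The hard part is not any single computation but the one structural upgrade in item (3): \cref{prop_permutation} literally provides only a local diffeomorphism, and one must invoke freeness and properness of the $\mathfrak{S}_r$-action to promote $\widehat{\pi}$ to a covering map, since it is exactly unique path lifting that makes the distance identity correct. The well-definedness verification in item (1) is the only other place demanding a little attentiveness.
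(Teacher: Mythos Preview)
Your proposal is correct and follows essentially the same route as the paper: define the quotient metric as the pushforward of the ambient metric under $\widehat{\pi}$ (well-defined because $\mathfrak{S}_r$ acts by isometries), deduce item~(2) from $\tau=\widehat{\pi}\circ\Phi_{\tuple{a}}^{-1}$ together with \eqref{useful_matrix}, and obtain item~(3) by lifting curves through the Riemannian covering $\widehat{\pi}$. Your explicit emphasis that \cref{prop_permutation} only yields a local diffeomorphism and that one must invoke the free proper action to get unique path lifting is a point the paper's proof leaves implicit when it speaks of ``its lift through the covering $\widehat{\pi}$.''
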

Because of the equality of condition numbers in \cref{lem_kappa_identity} and \cref{def_kappa}, we find that for every $\tuple{a}=(\tensor{A}_1,\ldots,\tensor{A}_r) \in \Var{M}_{r;n_1,\ldots,n_d}$ we have
\[
\kappa[\tau](\tensor{A})
= \lim_{\epsilon\to 0} \sup_{\substack{\tensor{B} \in \sigma_r\\ \|\tensor{A}-\tensor{B}\|_F \le \epsilon}} \frac{\mathrm{dist}_{\widehat{\Var{M}}}(\tau(\tensor{A}),\tau(\tensor{B}))
	}{\|\tensor{A}-\tensor{B}\|_F}
= \lim_{\epsilon\to 0} \sup_{\substack{\tensor{B} \in \sigma_r\\ \|\tensor{A}-\tensor{B}\|_F \le \epsilon}} \min_{\pi \in \mathfrak{S}_r} \frac{\| \Phi_{\tuple{a}}^{-1}(\tensor{A}) - \pi \circ \Phi_{\tuple{a}}^{-1}(\tensor{B}) \|_F}{\|\tensor{A}-\tensor{B}\|_F},
\]
where $\tau=\tau_{r;n_1,\ldots,n_d}$ and the last equality follows from $(3)$ in Lemma \ref{lem_kappa_identity}.
This above equality is very significant because it allows us to make sense of the distance between two \textit{unordered} CPDs, i.e., sets of rank-$1$ tensors, $\{ \tensor{A}_1, \ldots, \tensor{A}_r \}$ and $\{ \tensor{B}_1, \ldots, \tensor{B}_r \}$. %
As a consequence, we get an instance of the well-known rule of thumb in numerical analysis:
\begin{align} \label{eqn_ruleofthumb}
\underbrace{\min_{\pi\in\mathfrak{S}_r} \| A - B P_{\pi} \|_F}_{\text{forward error}} \lesssim \underbrace{\kappa[\tau](\tensor{A})}_{\text{condition number}} \cdot \underbrace{\| \tensor{A} - \tensor{B} \|_F}_{\text{backward error}}
\end{align}
for nearby $\tensor{A} = \tensor{A}_1 + \cdots + \tensor{A}_r$ and $\tensor{B} = \tensor{B}_1 + \cdots + \tensor{B}_r$; herein, $A = [ \tensor{A}_i ]_i \in \R^{n_1\cdots n_d \times r}$
(resp. $B = [ \tensor{B}_i ]_i \in \R^{n_1\cdots n_d \times r}$) is a matrix that contains the vectorized rank-$1$ tensors $\tensor{A}_i$ (resp.~$\tensor{B}_i$) as columns, and $P_\pi$ is the $r \times r$ permutation matrix representing the permutation $\pi$. The notation~$\lesssim$ indicates that the bound is asymptotically sharp for infinitesimal $\|\tensor{A}-\tensor{B}\|_F$.

\section{Pencil-based algorithms for the CPD} \label{sec_pencilbased}
We start by specifying a very general class of numerical algorithms to which the analysis in \cref{sec_unstable_algorithm} applies. The construction may seem a bit abstract at first sight, so it is useful to keep in mind that the prototypical algorithm from the introduction is an example of a PBA.

As it suffices, in principle, to present a single input for which an algorithm is unstable, we can choose a well-behaved subset of $r$-nice tensors $\Var{N}^* \subset \Var{N}_{r;n_1,n_2,n_3} \subset \R^{n_1 \times n_2 \times n_3}$ (for the exact choice of $\Var N^*$ see \cref{def_pencil_algorithm} below) and specify what a PBA should compute for such inputs. If the numerical instability already occurs on this subset, then it is also unstable on larger domains.
We recall from \cref{sec_nice} that by considering only $r$-nice tensors $\Var{N}_{r;n_1,n_2,n_3}$, the TDP consists of computing the action of the function $\tau_{r;n_1,n_2,n_3}$ from \cref{Phi_inverse}. PBAs compute this map in a particular way, via the four transformations described below.

The input of a PBA is assumed to be the multidimensional array $\tensor{A} \in \R^{n_1 \times n_2 \times n_3}$. The first transformation is the multilinear multiplication $\rho_Q$ that maps $n_1\times n_2\times n_3$ tensors to format $n_1\times n_2\times 2$ via the matrix $Q \in \R^{n_3 \times 2}$ with orthonormal columns:
\[
 \rho_Q: \R^{n_1 \times n_2 \times n_3} \to \R^{n_1 \times n_2 \times 2},\; \tensor{A} \mapsto (I_{n_1}, I_{n_2}, Q^T) \cdot \tensor{A}.
\]

The second transformation, $\widehat{\theta}$, computes the set of unit-norm columns of the first factor matrix $A$ of the CPD when restricted to $\Var{N}_{r;n_1,n_2,2}$:
\begin{equation*} 
 \widehat{\theta}|_{\Var{N}_{r;n_1,n_2,2}} : \Var{N}_{r;n_1,n_2,2} \to \widehat{S}_{r;n_1} ,\quad\tensor{B} =
 \sum_{i=1}^r \sten{a}{i}{} \otimes \sten{b}{i}{} \otimes {\vect{z}}_i \mapsto \{ \vect{a}_1, \ldots, \vect{a}_r\}.
\end{equation*}
Herein, $\widehat{S}_{r;n_1} = S_{r;n_1}/\mathfrak{S}_r$, where $S_{r;n_1}$ is as in \cref{def_S}.
Note the curious definition of $\widehat{\theta}$ involving the restriction to $\Var{N}_{r;n_1,n_2,n_3}$. The reason for this formulation is that a PBA will be executed using floating-point arithmetic. It is unlikely that the floating point representation $\operatorname{fl}(\tensor{B}) \in \R^{n_1 \times n_2 \times 2}$ is exactly in $\Var{N}_{r;n_1,n_2,2} \subset \R^{n_1 \times n_2 \times 2}$, even when $\tensor{B} \in \Var N_{r;n_1,n_2,2}$. Therefore, a minimal additional demand is placed on $\widehat{\theta}$: For every $\tensor{B} \in \Var{N}_{r;n_1,n_2,2}$, $\widehat{\theta}$ must be defined for $\operatorname{fl}(\tensor{B})$.

The third transformation, $\upsilon$, when restricted to
\[
\Var{R}_{r;n_1,n_2,n_3} := \bigl\{ (\tensor{A}, A) \mid A = (\vect{a}_1,\ldots,\vect{a}_r)\in S_{r;n_1} \text{ and } \tensor{A}=\sum_{i=1}^r \vect{a}_i \otimes \vect{b}_i \otimes \vect{c}_i \in \Var{N}_{r;n_1,n_2,n_3} \bigr\},
\]
essentially computes the Khatri--Rao product $B\odot C$ of the remaining factor matrices, namely
\[
 \upsilon|_{\Var{R}_{r;n_1,n_2,n_3}} : \Var{R}_{r;n_1,n_2,n_3} \to \Var{S}_{n_2,n_3}^{\times r}, \; \Bigl(\tensor{A}=\sum_{i=1}^r \vect{a}_i \otimes \vect{b}_i \otimes \vect{c}_i, (\vect{a}_1,\ldots,\vect{a}_r) \Bigr) \mapsto ( \vect{b}_1 \otimes \vect{c}_1, \ldots, \vect{b}_r \otimes \vect{c}_r ).
\]
For the proof of instability in \cref{sec_3o_unstable}, it will not matter if or how $\upsilon$ is defined outside of $\Var{R}_{r;n_1,n_2,n_3}$, so we impose no further constraints.
The final step computes the (unordered) Khatri--Rao product of two ordered sets of vectors:
\begin{align*}
\widehat{\odot} : \R^{p \times r} \times \R^{q \times r} \to \Var{S}_{p,q}^{\times r} / \mathfrak{S}_r,\;
\bigl( (\vect{x}_1,\ldots,\vect{x}_r), (\vect{y}_1, \ldots, \vect{y}_r \bigr) &\mapsto \{ \vect{x}_1 \otimes \vect{y}_1, \ldots, \vect{x}_r \otimes \vect{y}_r \}.
\end{align*}
Applied to $A$ and $B \odot C$, this yields the set of rank-$1$ tensors solving the TDP.

We will define a PBA to be an algorithm composing the above functions. The input space for a PBA is thus $\Var{N}^* := \rho_Q^{-1}(\Var{N}_{r;n_1,n_2,2}) \cap \Var{N}_{r;n_1,n_2,n_3}.$ (it is the subset $\Var{N}^*$ mentioned at the start of this section).
Hence, we arrive at the definition of the class of PBAs for solving a TDP whose input is in $\Var{N}^* \subset \R^{n_1 \times n_2 \times n_3}$.

\begin{dfn}[Pencil-based algorithm] \label{def_pencil_algorithm}
 A pencil-based algorithm for solving the TDP is an algorithm that computes the tensor decomposition map $\tau_{r;n_1,n_2,n_3}$ when given the $n_1 \times n_2 \times n_3$ input array
 \(
\tensor{A} = \sum_{i=1}^r  \vect{a}_i \otimes \vect{b}_i \otimes \vect{c}_i \in \Var{N}^*,
 \)
where $\vect{a}_i \in \mathbb{S}^+(\R^{n_1})$ and $\Var{N}^* = \rho_Q^{-1}(\Var{N}_{r;n_1,n_2,2}) \cap \Var{N}_{r;n_1,n_2,n_3}$, by performing the following steps:
 \begin{enumerate}
  \item[S1.\phantom{0}] $\tensor{B} \leftarrow \rho_Q(\tensor{A})$;
  \item[S2.\phantom{0}] $\{ \vect{a}_1 , \ldots, \vect{a}_r\} \leftarrow \widehat{\theta}(\tensor{B})$;
  \item[S3.a] Choose an order $A := (\vect{a}_1 , \ldots, \vect{a}_r)$;
  \item[S3.b] $( \vect{b}_1 \otimes \vect{c}_1, \ldots, \vect{b}_r \otimes \vect{c}_r ) \leftarrow \upsilon(\tensor{A}, A)$;
  \item[S4.\phantom{0}] output $\leftarrow \widehat{\odot}\bigl( (\vect{a}_1 , \ldots, \vect{a}_r), ( \vect{b}_1 \otimes \vect{c}_1, \ldots, \vect{b}_r \otimes \vect{c}_r ) \bigr)$.
 \end{enumerate}
\end{dfn}

\section{Pencil-based algorithms are unstable} \label{sec_unstable_algorithm} \label{sec_3o_unstable}
We continue by showing that PBAs are numerically forward unstable for solving the TDP for third-order tensors. For $\tensor{A}\in\Var{N}^* \subset \R^{n_1\times n_2\times n_3}$ let $\{ \widetilde{\tensor{A}}_1, \ldots, \widetilde{\tensor{A}}_r \}$ be the CPD returned by a PBA in floating-point representation. The overall goal in the proof of \cref{thm_unstable} is showing that for all small $\epsilon > 0$ there exists an open neighborhood $\mathcal{O}_\epsilon \subset \Var{N}_{r;n_1,n_2,n_3}$ of $r$-nice tensors such that for $\tensor{A}=\tensor{A}_1+\cdots+\tensor{A}_r$ in that neighborhood the \textit{excess factor}
\begin{equation}\label{def_mu}
\omega(\tensor{A})
:= \frac{\min_{\pi\in\mathfrak{S}_r}\sqrt{\sum_{i=1}^r \Vert \tensor{A}_i-\widetilde{\tensor{A}}_{\pi(i)}\Vert^2_F}}{\kappa[\tau_{r; n_1,n_2,n_3}](\tensor{A}) \cdot \| \tensor{A} - \operatorname{fl}(\tensor{A}) \|_F }
\end{equation}
is at least a constant times $\epsilon^{-1}$. The exact statement is in \cref{thm_neighborhood} below.

We call $\omega$ the excess factor because it measures by how much the forward error\footnote{Recall its definition from \cref{eqn_ruleofthumb}.} produced by the numerical algorithm, as measured by the numerator, exceeds the forward error that one can expect from solving the TDP (which is equivalent to computing the map $\tau_{r; n_1,n_2,n_3}$), as measured by the denominator. Showing that the excess factor can become arbitrarily large on the domain of $\tau_{r; n_1,n_2,n_3}$ is essentially equivalent to the standard definition of numerical forward instability of an algorithm for computing $\tau_{r; n_1,n_2,n_3}$ \cite{higham}. In fact, the excess factor can be interpreted as a quantitative measure of the forward numerical instability of an algorithm on a particular input. Ideally, $\omega$ is bounded by a small constant, but for numerically unstable algorithms $\omega$ is ``too large'' relative to the problem dimensions. The next result is a more precise version of \cref{thm_unstable} which states that for all $\tensor{A} \in \Var{O}_\epsilon$, \textit{a PBA becomes arbitrarily unstable} as $\epsilon \to 0$, irrespective of the problem size.

\begin{thm}\label{thm_neighborhood}
There exist a constant $k>0$ and a tensor $\tensor{O}\in \Var N_{r;n_1,n_2,n_3}$ with the following properties: For all sufficiently small $\epsilon > 0$, there exists  an open neighborhood $\Var{O}_\epsilon$ of $\tensor{O}$, such that for all tensors $\tensor{A} \in \Var{O}_\epsilon$ we have \begin{enumerate}
\item $\tensor{A}\in \Var{N}^*$ {is a valid input for a PBA}, and
\item $\omega(\tensor{A}) \ge k \epsilon^{-1}$.
\end{enumerate}
Herein, $\Var{N}^*$ is as in \cref{def_pencil_algorithm}.
\end{thm}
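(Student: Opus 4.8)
The plan is to produce a single $r$-nice tensor $\tensor{O}$ near which any PBA is forced to solve an ill-conditioned $n_1\times n_2\times 2$ subproblem in its intermediate step, and then to track the unavoidable rounding error of the input through the four maps of \cref{def_pencil_algorithm}. \textbf{Construction of $\tensor{O}$.} Fix the matrix $Q\in\R^{n_3\times 2}$ of the given PBA; since $n_3\ge 3$ we have $\dim\ker Q^T=n_3-2\ge 1$. Choose $\vect{a}_i\in\mathbb{S}^+(\R^{n_1})$, $\vect{b}_i\in\R^{n_2}$, $\vect{c}_i\in\R^{n_3}$ generically subject to the single linear constraint $\vect{c}_1-\vect{c}_2\in\ker Q^T$ (so $Q^T\vect{c}_1=Q^T\vect{c}_2\neq 0$), and set $\tensor{O}:=\sum_{i=1}^r\vect{a}_i\otimes\vect{b}_i\otimes\vect{c}_i$; for $r=1$ take instead $\vect{c}_1\in\ker Q^T\setminus\{0\}$. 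Using Kruskal's criterion (\cref{kruskal}, with $k_A=k_B=k_C=r$ and $r\le\tfrac{3r-2}{2}$ when $r\ge 2$) and the genericity of the factors, one verifies that $\tensor{O}$ satisfies (i)--(v) of \cref{def_nice}, so $\tensor{O}\in\Var{N}_{r;n_1,n_2,n_3}$ and $\kappa_0:=\kappa[\tau_{r;n_1,n_2,n_3}](\tensor{O})=\kappa(\tensor{O}_1,\dots,\tensor{O}_r)<\infty$; by continuity of $\kappa[\tau_{r;n_1,n_2,n_3}]$ on $\Var{N}_{r;n_1,n_2,n_3}$ there is a fixed open neighborhood $\Var{U}$ of $\tensor{O}$ on which $\kappa[\tau_{r;n_1,n_2,n_3}]\le 2\kappa_0$. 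On the other hand $\rho_Q(\tensor{O})=\sum_i\vect{a}_i\otimes\vect{b}_i\otimes(Q^T\vect{c}_i)$ has its first two third-mode factors equal (or zero, if $r=1$), so it is not $r$-identifiable and $\rho_Q(\tensor{O})\notin\Var{N}_{r;n_1,n_2,2}$, i.e.\ $\tensor{O}\notin\Var{N}^*$. Finally $\Var{N}^*=\rho_Q^{-1}(\Var{N}_{r;n_1,n_2,2})\cap\Var{N}_{r;n_1,n_2,n_3}$ is Zariski-open and dense in $\sigma_r$ (both factors are complements of proper Zariski-closed sets and $\rho_Q$ is polynomial), so $\tensor{O}\in\overline{\Var{N}^*}$.

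\textbf{Blow-up of the subproblem's condition number.} For $\tensor{A}\in\Var{U}\cap\Var{N}^*$ write its continuously varying CPD as $\tensor{A}=\sum_i\vect{a}_i'\otimes\vect{b}_i'\otimes\vect{c}_i'$, so $\rho_Q(\tensor{A})=\sum_i\vect{a}_i'\otimes\vect{b}_i'\otimes\vect{z}_i'$ with $\vect{z}_i'=Q^T\vect{c}_i'$. By \cref{lem_kappa_lower_nn2},
\[
\kappa\bigl(\rho_Q(\tensor{A})_1,\dots,\rho_Q(\tensor{A})_r\bigr)\ \ge\ \Bigl(1-\bigl\lvert\langle \vect{z}_1'/\lVert\vect{z}_1'\rVert,\ \vect{z}_2'/\lVert\vect{z}_2'\rVert\rangle\bigr\rvert\Bigr)^{-1/2}\ \longrightarrow\ \infty\quad(\tensor{A}\to\tensor{O}),
\]
since $\vect{z}_i'\to Q^T\vect{c}_i$ forces $\lvert\langle\widehat{\vect{z}}_1',\widehat{\vect{z}}_2'\rangle\rvert\to 1$ (for $r=1$ replace the right-hand side by $\lVert\rho_Q(\tensor{A})\rVert_F^{-1}\to\infty$). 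Inspecting the proof of \cref{lem_kappa_lower_nn2} moreover yields an explicit near-null unit vector $\vect{d}(\tensor{A})\approx\vect{a}_2'\otimes\vect{b}_1'\otimes\tfrac{\vect{z}_1'-\vect{z}_2'}{\lVert\vect{z}_1'-\vect{z}_2'\rVert}$ of the derivative $\mathrm{d}\Phi_r$ at $\rho_Q(\tensor{A})$, along which $\tau_{r;n_1,n_2,2}$, and hence its first-factor part $\widehat{\theta}|_{\Var{N}_{r;n_1,n_2,2}}=\nu\circ\tau_{r;n_1,n_2,2}$ (whose derivative does not annihilate $\vect{d}(\tensor{A})$, as $\vect{d}$ has a genuine first-factor component), is amplified by the factor above.

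\textbf{Chasing the rounding error and concluding.} Run a PBA with unit roundoff $\epsilon$ on $\tensor{A}\in\Var{U}\cap\Var{N}^*$. Storing $\tensor{A}$ as $\operatorname{fl}(\tensor{A})$ and performing $\rho_Q$ (a backward-stable matrix product) in step S1 returns some $\widehat{\tensor{B}}$ with $\lVert\widehat{\tensor{B}}-\rho_Q(\tensor{A})\rVert_F\le c_1\epsilon$, and for generic $\tensor{A}$ this $\widehat{\tensor{B}}$ is $\operatorname{fl}$ of a nice tensor, so step S2 returns the exact normalized first factors of $\widehat{\tensor{B}}$. Choose $\Var{O}_\epsilon\subset\Var{U}\cap\Var{N}^*$ to be the nonempty open set of those $\tensor{A}$ for which simultaneously (a) $\kappa(\rho_Q(\tensor{A})_1,\dots,\rho_Q(\tensor{A})_r)\ge\sqrt{2n_1n_2}\,\epsilon^{-1}$ (possible by the previous paragraph, shrinking toward $\tensor{O}$), and (b) $\lvert\langle\vect{d}(\tensor{A}),\,\widehat{\tensor{B}}-\rho_Q(\tensor{A})\rangle\rvert\ge\tfrac{1}{2}\epsilon/\sqrt{2n_1n_2}$, which holds off a proper closed ``tube'' inside the ball of (a), hence on an open subset accumulating at $\tensor{O}$, because the structured perturbation $\widehat{\tensor{B}}-\rho_Q(\tensor{A})$ is not accidentally orthogonal to $\vect{d}(\tensor{A})$ for generic $\tensor{A}$. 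For $\tensor{A}\in\Var{O}_\epsilon$, (a)+(b) give $\operatorname{dist}_{\widehat{S}_{r;n_1}}\!\bigl(\widehat{\theta}(\widehat{\tensor{B}}),\widehat{\theta}(\rho_Q(\tensor{A}))\bigr)\ge c_2>0$: the computed first factors are a constant away from the true ones $\{\vect{a}_1',\dots,\vect{a}_r'\}$ of $\tensor{A}$. Since steps S3--S4 produce output rank-$1$ tensors $\widetilde{\tensor{A}}_i=\widehat{\vect{a}}_i\otimes\widehat{\vect{u}}_i$ whose first-factor direction is pinned to the computed $\widehat{\vect{a}}_i$, and the weights $\lVert\widehat{\vect{u}}_i\rVert,\lVert\vect{b}_i'\otimes\vect{c}_i'\rVert$ stay bounded away from $0$ near $\tensor{O}$ (a vanishing $\widehat{\vect{u}}_i$ gives an output tensor near $0$, likewise far from $\tensor{A}_i$), \eqref{eq:innerprod} forces $\min_{\pi\in\mathfrak{S}_r}\sqrt{\sum_i\lVert\tensor{A}_i-\widetilde{\tensor{A}}_{\pi(i)}\rVert_F^2}\ge c_3>0$. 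With $\kappa[\tau_{r;n_1,n_2,n_3}](\tensor{A})\le 2\kappa_0$ and $\lVert\tensor{A}-\operatorname{fl}(\tensor{A})\rVert_F\le\epsilon\lVert\tensor{A}\rVert_F\le 2\epsilon\lVert\tensor{O}\rVert_F$ this yields $\omega(\tensor{A})\ge\tfrac{c_3}{4\kappa_0\lVert\tensor{O}\rVert_F}\,\epsilon^{-1}=:k\epsilon^{-1}$; together with $\Var{O}_\epsilon\subset\Var{N}^*$ this proves (1) and (2).

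\textbf{Main obstacle.} The technical heart is the third step: upgrading the \emph{worst-case} bound ``$\text{forward error}\lesssim\epsilon\cdot\kappa(\text{subproblem})$'' to a \emph{guaranteed} $\Omega(1)$ forward error valid simultaneously on an open set of inputs. This needs (i) an explicit near-null direction $\vect{d}(\tensor{A})$ of $\mathrm{d}\Phi_r$, obtained by differentiating the Segre parametrization $\Psi^*$ exactly as in \cref{lem_kappa_lower_nn2}; (ii) showing that the \emph{structured} rounding error $\operatorname{fl}(\tensor{A})-\tensor{A}$, pushed through the linear map $\rho_Q$ and combined with S1's own roundoff, has an $\Omega(\epsilon)$ component along $\vect{d}(\tensor{A})$ for a Zariski-generic family of $\tensor{A}$, so that $\Var{O}_\epsilon$ can indeed be taken open; and (iii) ruling out that the maps $\upsilon$ and $\widehat{\odot}$ — which \cref{def_pencil_algorithm} constrains only on the exact loci $\Var{R}_{r;n_1,n_2,n_3}$ and on exact inputs — could \emph{repair} a corrupted first factor, which the ``pinned first-factor direction'' observation handles because every output tensor necessarily has the shape $\widehat{\vect{a}}_i\otimes(\cdot)$.
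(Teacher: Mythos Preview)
Your overall strategy coincides with the paper's: construct an $r$-nice $\tensor{O}$ whose projection $\rho_Q(\tensor{O})$ lands on the ill-posed locus of the $n_1\times n_2\times 2$ TDP, show the subproblem condition number blows up like $\epsilon^{-1}$ nearby, and argue that the resulting error in the first factor cannot be repaired by S3--S4. The paper uses an explicit odeco tensor (so $\kappa[\tau_{r;n_1,n_2,n_3}](\tensor{O})=1$ exactly and all factor matrices are nearly orthonormal), whereas your generic construction with the single constraint $Q^T\vect{c}_1=Q^T\vect{c}_2$ is the same idea and should also work; the odeco choice merely makes the perturbation bookkeeping cleaner.

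Where your sketch has genuine gaps is precisely in the two steps the paper isolates as \cref{lem_samecond} and \cref{lem_nodecrease}. Your assertion that ``$\mathrm{d}\widehat{\theta}$ does not annihilate $\vect{d}(\tensor{A})$ because $\vect{d}$ has a first-factor component'' is the content of \cref{lem_samecond}, and proving it requires closing the diagram $\tau_{r;n_1,n_2,2}=\widehat{\eta}\circ(\mathrm{Id}_{\Var{N}}\times\widehat{\theta})$ and bounding $\kappa[\widehat{\eta}]$ uniformly by a constant; this is where the near-orthonormality of $A$ (so that $A^\dagger$ and $(A^TA)^{-1}$ are tame) is actually used, and your generic $A$ does not obviously supply it. Likewise, your ``pinned first-factor'' step is \cref{lem_nodecrease}, whose nontrivial part is the minimization over $\mathfrak{S}_r$: one must exclude that some \emph{other} permutation of the wrong $\widetilde{\vect{a}}_i$'s accidentally matches $A$ better, and the paper's argument again exploits $\|A-A'\|_F\le\nu$ with $A'$ orthonormal. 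Neither subtlety is addressed in your outline.

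Two further points. You identify the theorem's $\epsilon$ with the unit roundoff; in the paper $\epsilon$ is the \emph{radius} of the CPD-neighborhood $\Var{U}_\epsilon$ around $\tensor{O}$, while the unit roundoff $\epsilon_u$ is fixed and cancels in the ratio defining $\omega$. This reparametrization is not fatal but muddles ``for all sufficiently small $\epsilon$.'' More seriously, your condition~(b)---that the structured roundoff $\widehat{\tensor{B}}-\rho_Q(\tensor{A})$ has an $\Omega(\epsilon)$ component along $\vect{d}(\tensor{A})$ on an \emph{open} set---is delicate because $\operatorname{fl}$ is discontinuous, so the locus where (b) holds is not obviously open. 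The paper does not really resolve this either: it introduces input-dependent constants $\beta_1,\beta_2$ and argues heuristically (concentration of measure, footnote on logarithmic digit distribution) that they are not too small, rather than carving out an explicit open subset as you attempt.
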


\subsection{The key ingredients}
The key observation is that for computing the tensor decomposition map $\tau_{r;n_1,n_2,n_3}$ every PBA computes $\widehat{\theta}$ in S2.
We will show that the condition number of $\widehat{\theta}$ is comparable to the condition number of $\tau_{r;n_1,n_2,2}$. Combining this result with the observations from \cref{sec_distribution} and \cite{BV2018b}, which both demonstrated that the condition number of the tensor decomposition map $\tau_{r;n_1,n_2,2}$ for $n_1 \times n_2 \times 2$ tensors can be much worse than the one of $\tau_{r,n_1,n_2,n_3}$ for $n_1 \times n_2 \times n_3$ tensors, motivated our proof of \cref{thm_unstable,thm_neighborhood}.

Let us consider the relation between the tensor decomposition map for $n_1\times n_2\times 2$-tensors and $\widehat{\theta}$. For brevity, we denote the manifold of $r$-nice tensors in $\R^{n_1 \times n_2 \times 2}$ by 
\[
\Var{N} := \Var{N}_{r;n_1,n_2,2}.
\]
The main intuition underpinning the proof of \cref{thm_neighborhood} is the following diagram:
\begin{equation}\label{eqn_diagram}
\begin{tikzpicture}
 \draw (-3,1)  node (b) {$\Var{N}$};
 \draw (-3,-0.5) node (c) {$\Var{N} \times \widehat{S}_{r;n_1}$};
 \draw (3,-0.5) node (d) {$\widehat{\Var{M}}_{r;n_1,n_2,2}$};
 \draw[-stealth] (b) edge node[left] {$\operatorname{Id}_\Var{N} \times \widehat{\theta}$} (c);
 \draw[-stealth] (b) edge node[above right] {$\tau_{r;n_1,n_2,2}$} (d);
 \draw[-stealth] (c) edge node[below] {$\widehat{\eta}$} (d);
\end{tikzpicture}
\end{equation}
Herein, $\widehat{\eta}$ is any map so that $\tau_{r;n_1,n_2,2} = \widehat{\eta} \circ (\operatorname{Id}_{\Var{N}}\times\widehat{\theta})$. For example, we could take the map $\widehat{\eta} = \tau_{r;n_1,n_2,2} \circ \pi_1$, where $\pi_1(x,y) = x$ projects onto the first factor.
For clearly conveying the main idea, let us imagine for a moment that $\operatorname{Id}_\Var{N} \times \widehat{\theta}$, $\widehat{\eta}$, and $\tau_{r;n_1,n_2,2}$ were smooth $(C^\infty)$ multivariate functions between Euclidean spaces. For any such functions $f,g$, we have that $\kappa[f](x) = \| J_f(x) \|_2$, where $J_f(x)$ is the Jacobian matrix of $f$ at $x$; see, e.g., \cite[Proposition 14.1]{BC2013}. Consequently, for the composite function $g \circ f$, we get
\begin{equation}\label{eqn_condition_inequality}
\kappa[g \circ f](x) = \| J_g(f(x)) J_f(x) \|_2 \le \| J_g(f(x))\|_2 \| J_f(x) \|_2 = \kappa[g](f(x)) \cdot  \kappa[f](x).
\end{equation}
It thus seems feasible to obtain lower bounds on the condition number of $f = \operatorname{Id}_{\Var{N}} \times \widehat{\theta}$ in function of the condition numbers of $g \circ f = \tau_{r;n_1,n_2,2}$ and $g = \widehat{\eta}$. The key insight is that $\widehat{\eta}$ should be chosen in such a way that it has a condition number bounded by a constant, so that $\kappa[\operatorname{Id}_{\Var{N}} \times \widehat{\theta}](\tensor{B})$ would be comparable in magnitude to $\kappa[\tau_{r;n_1,n_2,2}](\tensor{B})$.

Using the above ideas, we will rigorously prove the next lemma in the appendix, which states that the condition number of $\widehat{\theta}$ can be bounded from below by the condition number of the tensor decomposition map $\tau_{r;n_1,n_2,2}$ in some cases.

\begin{lemma} \label{lem_samecond}
Let $\nu > 0$ be sufficiently small.
Let $\tensor{B} = \sum_{i=1}^r \vect{a}_i \otimes \vect{b}_i \otimes \vect{z}_i$  be an element of $\Var{N}$. Assume that $\|\vect{a}_i\| = 1$ and $\Vert \vect{b}_i\otimes \vect{z}_i\Vert <1+\nu$ for $i=1,\ldots,r$. Let $A=[\vect{a}_i]_i$. If there exists a matrix $A'\in \R^{n_1 \times r}$ with orthonormal columns such that $\| A  - A' \|_F \le \nu$, then
\[
{\kappa[\widehat{\theta}|_{\Var{N}}](\tensor{B})
\ge \frac{\kappa[\tau_{r;n_1,n_2,2}](\tensor{B})}{10 r} - 1}.
\]
\end{lemma}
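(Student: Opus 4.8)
The plan is to make the heuristic in \eqref{eqn_condition_inequality} rigorous. The obstacle is that $\widehat{\theta}$, $\tau_{r;n_1,n_2,2}$, and the intermediate map are maps between manifolds, not Euclidean spaces, so I cannot literally write $\kappa[f] = \|J_f\|_2$; I must work with the derivatives as linear maps between tangent spaces and use the intrinsic condition number \eqref{def_kappa_general}. I would first fix notation: set $g = \widehat{\eta} = \tau_{r;n_1,n_2,2}\circ\pi_1$ and $f = \operatorname{Id}_{\Var N}\times\widehat{\theta}$, so that $\tau_{r;n_1,n_2,2} = g\circ f$ on $\Var N$. By the chain rule, $\deriv{(g\circ f)}{\tensor B} = (\deriv{g}{f(\tensor B)})\circ(\deriv{f}{\tensor B})$, and submultiplicativity of the operator norm on linear maps between inner-product spaces gives
\[
\kappa[\tau_{r;n_1,n_2,2}](\tensor B) \le \kappa[g](f(\tensor B))\cdot\kappa[f](\tensor B),
\]
hence
\[
\kappa[f](\tensor B) \ge \frac{\kappa[\tau_{r;n_1,n_2,2}](\tensor B)}{\kappa[g](f(\tensor B))}.
\]
Then I would relate $\kappa[f]$ to $\kappa[\widehat\theta]$: since $f = \operatorname{Id}_{\Var N}\times\widehat\theta$, its derivative is the block map $(t)\mapsto(t,(\deriv{\widehat\theta}{\tensor B})(t))$, so $\|\deriv{f}{\tensor B}(t)\|^2 = \|t\|^2 + \|\deriv{\widehat\theta}{\tensor B}(t)\|^2$, which yields $\kappa[f](\tensor B)^2 \le 1 + \kappa[\widehat\theta|_{\Var N}](\tensor B)^2$, i.e. $\kappa[\widehat\theta|_{\Var N}](\tensor B) \ge \kappa[f](\tensor B) - 1$ (using $\sqrt{1+x^2}\le 1+x$). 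Combining the two displays gives
\[
\kappa[\widehat\theta|_{\Var N}](\tensor B) \ge \frac{\kappa[\tau_{r;n_1,n_2,2}](\tensor B)}{\kappa[g](f(\tensor B))} - 1,
\]
so the whole lemma reduces to the estimate $\kappa[g](f(\tensor B)) = \kappa[\tau_{r;n_1,n_2,2}\circ\pi_1](f(\tensor B)) \le 10r$.

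The main work, and the main obstacle, is therefore bounding $\kappa[\tau_{r;n_1,n_2,2}\circ\pi_1]$ by $10r$ at the relevant point. Since $\pi_1$ is an orthogonal projection $\Var N\times\widehat S_{r;n_1}\to\Var N$ with $\|\deriv{\pi_1}{}\|\le 1$, it suffices to bound $\kappa[\tau_{r;n_1,n_2,2}](\tensor B)$ — but that is exactly the quantity I want to lower-bound elsewhere, so this cannot be the right choice of $\widehat\eta$. The correct strategy is to exploit the freedom in choosing $\widehat\eta$: rather than discarding the extra data produced by $\widehat\theta$ (the set $\{\vect a_i\}$), I should use it. Given the unordered set $\{\vect a_1,\dots,\vect a_r\}$ of unit vectors in general linear position together with $\tensor B\in\Var N_{r;n_1,n_2,2}$, the remaining factors are recovered by linear algebra: pick any ordering of the $\vect a_i$, solve the linear system $B_{(1)} = A (B\odot Z)^T$ for $B\odot Z$ (i.e. $B\odot Z = (A^\dagger B_{(1)})^T$ up to transpose), which is well-conditioned precisely because the $\vect a_i$ are well-separated unit vectors — and here is where the hypotheses $\|A - A'\|_F\le\nu$ and $\|\vect b_i\otimes\vect z_i\| < 1+\nu$ enter, guaranteeing that $\|A^\dagger\|$ and the norms of the recovered rank-$1$ terms are controlled by an absolute constant. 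So I would define $\widehat\eta$ to be this reconstruction map (take any ordering, solve the least-squares system, split each $\vect b_i\otimes\vect z_i$ off and pass to the $\mathfrak S_r$-quotient), check it is well-defined and smooth near $f(\tensor B)$, and then bound its derivative.

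Concretely, the estimate $\kappa[\widehat\eta](f(\tensor B))\le 10r$ would come from: (a) a bound on the sensitivity of the Moore--Penrose pseudoinverse $A\mapsto A^\dagger$ in terms of $\|A^\dagger\|_2$ and $\|A\|_2$ — standard perturbation theory of the pseudoinverse, with $\|A\|_2 \le\sqrt r$ (columns are unit vectors) and $\|A^\dagger\|_2$ bounded by a constant since $A$ is within $\nu$ of an orthonormal matrix; (b) the chain rule through the bilinear map $(A, B_{(1)})\mapsto (A^\dagger B_{(1)})^T$, whose derivative contributes factors controlled by $\|A^\dagger\|_2\le$ const and $\|B_{(1)}\|_F = \|\tensor B\|_F \le \sqrt r\,(1+\nu)$; (c) the observation that passing from an ordered list of rank-$1$ tensors to its $\mathfrak S_r$-orbit is a local isometry by \reflem{lem_kappa_identity}, so it does not increase the condition number; and (d) the fact that $\deriv{\pi_1}{}$ has norm $1$ and that $f$ itself appears only through $f(\tensor B)$, whose $\Var N$-component is $\tensor B$ and whose $\widehat S_{r;n_1}$-component is $\{\vect a_i\}$. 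Carefully tracking the numerical constants through (a)--(c), the worst factor is the $\sqrt r$ from $\|A\|_2$ together with a second $\sqrt r$ from $\|B_{(1)}\|_F$ and the $\sqrt r$-type bounds on the tangent data, which multiply out to at most $10r$ after absorbing all absolute constants (this is where the slack in the constant $10$ is spent, and where $\nu$ must be chosen small enough that, e.g., $\|A^\dagger\|_2\le 2$ and $\|\vect b_i\otimes\vect z_i\|\le 2$). The one genuinely delicate point is verifying that $\widehat\eta$, built from "choose an ordering," is a well-defined \emph{smooth} map into the quotient $\widehat{\Var M}_{r;n_1,n_2,2}$ near $f(\tensor B)$ — this is handled exactly as in \reflem{lemma_quotient1}/\refprop{prop_permutation}, using that $r$-identifiability makes the rank-$1$ terms pairwise distinct so the local section is unambiguous up to the free, proper $\mathfrak S_r$-action. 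I expect the bookkeeping of constants in step (c) to be the most error-prone part of writing the full proof, but no conceptual difficulty remains once $\widehat\eta$ is fixed as the linear-algebra reconstruction map.
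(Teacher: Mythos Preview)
Your proposal is correct and follows essentially the same route as the paper: factor $\tau_{r;n_1,n_2,2} = \widehat{\eta}\circ(\operatorname{Id}_{\Var N}\times\widehat{\theta})$ with $\widehat{\eta}$ the linear-algebra reconstruction map $(\tensor{B},\{\vect a_i\})\mapsto \widehat{\pi}\bigl(A\odot(A^\dagger\tensor{B}_{(1)})^T\bigr)$, use submultiplicativity and the identity $\kappa[\operatorname{Id}\times\widehat\theta]^2 = 1+\kappa[\widehat\theta]^2$, and then bound $\kappa[\widehat\eta]\le 10r$ by explicitly differentiating $\eta$ and exploiting the near-orthonormality of $A$. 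The paper carries out the last step by a Lipschitz-continuity argument that replaces $(\tensor{B},A)$ by the nearby orthonormal pair $(\tensor{B}',A')$ before estimating term by term, which is exactly the mechanism behind your steps (a)--(c); your identification of the delicate smoothness point for $\widehat\eta$ on the quotient is also how the paper handles it.
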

This shows that in some circumstances, the condition number of $\widehat{\theta}|_{\Var{N}}$ is proportional to the condition number of $\tau_{r;n_1,n_2,2}$ in $\R^{n_1\times n_2\times 2}$.
Unfortunately, the errors in the computation of $\widehat{\theta}|_{\Var{N}}$ cannot always be corrected, as we prove the following result in the appendix.
\begin{lemma} \label{lem_nodecrease}
Let $\nu>0$ be sufficiently small.
Let $\tensor{A} = \sum_{i=1}^r \vect{a}_i \otimes \vect{b}_i \otimes \vect{c}_i \in \Var{N}^*$ with $\|\vect{a}_i\|=1$ and factor matrices $A,B,C$. Let $\widetilde{A}\in\R^{n_1\times r}$ be a fixed matrix with unit-norm columns and let $\delta := \min_{\pi\in\mathfrak{S}_r} \| A - \widetilde{A} P_{\pi} \|_F$. If $\|\vect{b}_i\otimes \vect{c}_i\| \ge 1-\nu$ for $i=1,\ldots,r$, $\delta<1$, and there exists a matrix $A'\in\R^{n_1 \times r}$ with orthonormal columns such that $\|A - A'\|_F \le \nu$, then for every $\widetilde{B} \in \R^{n_2 \times r}$ and every $\widetilde{C} \in \R^{n_3 \times r}$ we have
\[
 \min_{\pi\in\mathfrak{S}_r} \| A \odot B \odot C  - (\widetilde{A} \odot \widetilde{B} \odot \widetilde{C}) P_\pi \|_F \ge \sqrt{\tfrac{3}{4}} (1-\nu) \, \delta.
\]
\end{lemma}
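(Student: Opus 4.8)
The plan is to fix an arbitrary $\pi\in\mathfrak{S}_r$ and prove that $\|A\odot B\odot C-(\widetilde A\odot\widetilde B\odot\widetilde C)P_\pi\|_F\ge\sqrt{3/4}\,(1-\nu)\,\delta$, the claim then following by minimising over $\pi$. Write $\vect w_i:=\vect b_i\otimes\vect c_i$, $\widetilde{\vect w}_j:=\widetilde{\vect b}_j\otimes\widetilde{\vect c}_j$, and $t_i:=\langle\vect a_i,\widetilde{\vect a}_{\pi(i)}\rangle$, so that the squared left-hand side equals $\sum_{i=1}^r\|\vect a_i\otimes\vect w_i-\widetilde{\vect a}_{\pi(i)}\otimes\widetilde{\vect w}_{\pi(i)}\|^2$. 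The first step eliminates $\widetilde B,\widetilde C$: since $\widetilde{\vect a}_{\pi(i)}\otimes\widetilde{\vect w}_{\pi(i)}$ lies in the subspace $\{\widetilde{\vect a}_{\pi(i)}\otimes\vect v:\vect v\in\R^{n_2 n_3}\}$, on which the orthogonal projection is $(\widetilde{\vect a}_{\pi(i)}\widetilde{\vect a}_{\pi(i)}^{T})\otimes I$ (because $\|\widetilde{\vect a}_{\pi(i)}\|=1$), each summand is at least $\|(\vect a_i-t_i\widetilde{\vect a}_{\pi(i)})\otimes\vect w_i\|^2=(1-t_i^2)\|\vect w_i\|^2\ge(1-t_i^2)(1-\nu)^2$, using $\|\vect a_i\|=1$ and $\|\vect b_i\otimes\vect c_i\|\ge1-\nu$. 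Thus it remains to prove the purely geometric inequality $\sum_{i=1}^r(1-t_i^2)\ge\tfrac34\delta^2$.

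For this I would first record two consequences of the hypotheses. From $\|A-A'\|_F\le\nu$ with $A'$ having orthonormal columns one gets $\|A\|_2\le1+\nu$, hence $\sum_{k=1}^r\langle\vect a_k,\vect u\rangle^2=\|A^{T}\vect u\|^2\le(1+\nu)^2$ for every unit vector $\vect u$; put $\beta:=(1+\nu)^2-1$. From $\delta<1$, pick $\pi_0$ attaining the minimum defining $\delta$ and set $\theta_i:=\langle\vect a_i,\widetilde{\vect a}_{\pi_0(i)}\rangle$; then $\|\vect a_i-\widetilde{\vect a}_{\pi_0(i)}\|^2\le\delta^2<1$ forces $\theta_i>\tfrac12$ for all $i$, and $2\sum_i(1-\theta_i)=\sum_i\|\vect a_i-\widetilde{\vect a}_{\pi_0(i)}\|^2=\delta^2$.

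The combinatorial heart of the argument is to compare the $t_i$ with the $\theta_i$ via $\sigma:=\pi_0^{-1}\pi$. On its fixed-point set $F$ one has $t_i=\theta_i$, so $1-t_i^2=(1-\theta_i)(1+\theta_i)\ge\tfrac32(1-\theta_i)$ since $\theta_i>\tfrac12$. On the complement $N$ ($\sigma$ has no fixed points there, so $|N|\ge2$ unless $N=\emptyset$), writing $i':=\sigma(i)\ne i$ gives $\widetilde{\vect a}_{\pi(i)}=\widetilde{\vect a}_{\pi_0(i')}$; the two distinct inner products $\langle\vect a_{i'},\widetilde{\vect a}_{\pi_0(i')}\rangle=\theta_{i'}$ and $\langle\vect a_i,\widetilde{\vect a}_{\pi_0(i')}\rangle=t_i$ both occur in the sum bounded above by $1+\beta$, whence $t_i^2\le1+\beta-\theta_{i'}^2$, i.e.\ $1-t_i^2\ge\theta_{i'}^2-\beta\ge1-2(1-\theta_{i'})-\beta$. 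Summing, and using that $\sigma$ permutes $N$,
\[
\sum_{i=1}^r(1-t_i^2)\ \ge\ \tfrac32\!\!\sum_{i\in F}(1-\theta_i)\ +\ |N|\ -\ 2\!\!\sum_{i\in N}(1-\theta_i)\ -\ |N|\beta .
\]
Subtracting the target $\tfrac34\delta^2=\tfrac32\sum_i(1-\theta_i)$ leaves $|N|(1-\beta)-\tfrac72\sum_{i\in N}(1-\theta_i)$: this is $0$ when $N=\emptyset$, and when $N\neq\emptyset$ one has $|N|\ge2$ and $\sum_{i\in N}(1-\theta_i)\le\tfrac12\delta^2<\tfrac12$, so it is at least $2(1-\beta)-\tfrac74\ge0$ once $\nu$ (hence $\beta$) is small enough. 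Therefore $\sum_i(1-t_i^2)\ge\tfrac34\delta^2$; combining with the first step and taking square roots completes the proof.

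I expect the combinatorial step to be the real obstacle. Its point is that $\delta<1$ makes $\pi_0$ essentially the unique ``good'' matching, so that a column $\widetilde{\vect a}_{\pi_0(i')}$ well aligned with its $\pi_0$-partner $\vect a_{i'}$ automatically has small components against every other (nearly orthonormal) $\vect a_i$; this is exactly what prevents the ``wrong'' columns $i\in N$ from being simultaneously nearly anti-aligned with the $\vect a_i$, which is the only scenario in which the bound could fail. Pinning the constant at $\sqrt{3/4}$ forces one to use the sharp estimate $1-t_i^2\ge\tfrac32(1-\theta_i)$ on $F$ together with $|N|\ge2$ whenever $N\neq\emptyset$, which leaves precisely enough slack to absorb the $O(\nu)$ errors.
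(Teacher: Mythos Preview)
Your proof is correct. Both your argument and the paper's begin identically: the projection step (which the paper phrases as minimising $\|A\odot B\odot C-(\widetilde A P_\pi)\odot M\|_F$ over all $M\in\R^{n_2n_3\times r}$) reduces the claim to the geometric inequality $\sum_i(1-t_i^2)\ge\tfrac34\delta^2$ with $t_i=\langle\vect a_i,\widetilde{\vect a}_{\pi(i)}\rangle$. The difference lies in how this inequality is established. The paper proceeds by case distinction on $\pi$: for the optimal permutation $\pi^*$ it uses the law of cosines to write $1-\theta_i^2=\delta_i^2(1-\tfrac14\delta_i^2)\ge\tfrac34\delta_i^2$ and sums, while for $\pi'\ne\pi^*$ it exhibits a single mismatched index $k$ with $|\langle\widetilde{\vect a}_{\pi'_k},\vect a_k\rangle|\le\nu+\delta+\nu\delta$ (via the orthonormal $A'$), deducing that $\zeta_{\pi'}\ge\sqrt{1-(\nu+\delta+\nu\delta)^2}>(1+\sqrt r)\delta\ge\zeta_{\pi^*}$ so that the minimum over $\pi$ occurs at $\pi^*$. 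You instead treat all $\pi$ uniformly through the fixed/non-fixed decomposition of $\sigma=\pi_0^{-1}\pi$, replacing the paper's single-index estimate by the global bound $\|A^T\vect u\|^2\le(1+\nu)^2$, which gives $t_i^2+\theta_{\sigma(i)}^2\le1+\beta$ on $N$. Your route has a small advantage: it delivers the conclusion under the stated hypothesis $\delta<1$, whereas the paper's comparison $(1+\sqrt r)\delta<\sqrt{1-(\nu+\delta+\nu\delta)^2}$ actually forces $\delta<1/(1+\sqrt r)$, a harmless discrepancy in the application (where $\delta$ is of the order of the unit roundoff) but a genuine one relative to the lemma as stated.
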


This result implies that, even if steps S3 and S4 of a PBA could perfectly recover the rank-$1$ terms, the PBA would not be able to compensate the error introduced in the computation of $\widehat{\theta}$ in step S2. Moreover, under the assumptions of \cref{lem_samecond}, the condition number of $\widehat{\theta}$ is proportional to the condition number of $\tau_{r;n_1,n_2,2}$. This indicates that the magnification of an input perturbation of a PBA will be roughly proportional to the condition number of the TDP for $n_1\times n_2\times 2$ tensors. However, we recall from \cref{sec_distribution} and \cite{BV2018c} that there is a great discrepancy between the distribution of the condition numbers of the TDPs for $n_1 \times n_2 \times n_3$ and $n_1 \times n_2 \times 2$ tensors, the latter being much larger than the former on average. This will then imply that the excess factor $\omega$ in \cref{def_mu} is large. In the next subsections, we exploit \cref{lem_samecond,lem_nodecrease} for showing that $\omega$ is actually unbounded.

\subsection{Constructing a bad tensor}
The role of $\tensor{O}$ in \cref{thm_neighborhood} will be played by the following tensor.
Let $A'=[\vect{a}_i']_{i=1}^r \in\R^{n_1 \times r}$ and $B'=[\vect{b}_i']_{i=1}^r \in \R^{n_2 \times r}$ be matrices with orthonormal columns.
Let $U$ be the $n_3 \times n_3$ matrix $U = \left[\begin{smallmatrix}Q^\perp & Q\end{smallmatrix}\right]$, where $Q^\perp$ is an $n_3\times (n_3-2)$ matrix whose columns form an orthonormal basis of the complement of the columns of $Q$.
Define the matrix with~$r$ columns
\begin{equation*}
 C' :=  U \left( I_{n_3\times r} - \frac{2}{n_3} \vect{1}_{n_3} \vect{1}_r^T \right) \diag(1,-1,\ldots,-1)
 =
\frac{2}{n_3} U
\scalebox{.8}{$\begin{bmatrix}
\frac{n_3}{2} - 1 & 1 & 1\\
-1 & 1-\frac{n_3}{2} & 1 \\
-1 & 1 & 1-\frac{n_3}{2}  \\
-1 & 1 & 1 &\cdots\phantom{0} \\
\vdots & \vdots & \vdots  \\
-1 & 1 & 1
\end{bmatrix}$},
\end{equation*}
where $\vect{1}_k \in \R^{k}$ is the vector of ones, and $I_{m\times n} = [\vect{e}_i]_{i=1}^n$, where $\vect{e}_i$ is the $i$th standard basis vector of $\R^m$. By construction, $C'$ has orthonormal columns.
The \textit{orthogonally decomposable (odeco)} tensor associated with these factor matrices is
\begin{align} \label{eqn_cot}
 \tensor{O} := \sum_{i=1}^r \vect{a}_i' \otimes \vect{b}_i' \otimes \vect{c}_i'.
\end{align}
It will satisfy the requirements in \cref{thm_neighborhood} and complete the proof of instability of PBAs.

It is a very bad omen that $\tensor{O}$ is not a valid input for PBAs. This is because the projected tensor $\rho_Q(\tensor{O})$ has a positive-dimensional family of decompositions, implying $\kappa[\tau_{r;n_1,n_2,2}] =\infty$. Indeed, we have $Q^T \vect{c}_1'= \tfrac{2}{n_3}[-1\;-1]^T$ and, since $n_3>r+1$, for all $2\leq i\leq r$ we have $Q^T\vect{c}_i'= \tfrac{2}{n_3}[1\; 1]^T$, so that the projected tensor is
$$
\rho_Q(\tensor{O}) = - \frac{2}{n_3} \vect{a}_1' \otimes \vect{b}_1' \otimes \begin{bmatrix} 1\\1\end{bmatrix} + \frac{2}{n_3}\sum_{i=2}^r \vect{a}_i' \otimes \vect{b}_i' \otimes \begin{bmatrix} 1\\1\end{bmatrix}.
$$
By \cref{lem_kappa_lower_nn2}, \cite[Corollary 1.2]{BV2017}, or \cite[Lemma 6.5]{COV2017} the condition number of $\rho(Q)$ is infinite.
By taking a neighborhood of $\tensor{O}$ the proof of \cref{thm_neighborhood} will be completed.

Let $( \tensor{O}_1, \ldots, \tensor{O}_r ) \in \Var{S}^{\times r}$ be an ordered CPD of $\tensor{O}$. Then, the next lemma states that most of the tensors that have a decomposition in
\[
 \Var{U}_\epsilon = \{ (\tensor{A}_1,\ldots,\tensor{A}_r) \in \Var{M}_{r;n_1,n_2,n_3} \subset \Var{S}^{\times r} \mid \|\tensor{A}_i - \tensor{O}_i\|_F < \epsilon,\; i=1,\ldots,r \}
\]
are valid inputs for a PBA, where $\Var{M}_{r;n_1,n_2,n_3}$ is as in \cref{prop_nice}. The lemma is proved in the appendix.
\begin{lemma} \label{lem_opendense}
 $\Var{O}_\epsilon = \Phi_r( \Var{U}_\epsilon ) \cap \Var{N}^*$ is an open subset of $\sigma_r$ with $\tensor{O} \in \overline{ \Var{O}_\epsilon }$.
\end{lemma}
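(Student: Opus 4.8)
The statement bundles together three claims about $\Var{O}_\epsilon = \Phi_r(\Var{U}_\epsilon) \cap \Var{N}^*$: that it is an open subset of $\sigma_r$, that it is nonempty for all small $\epsilon$, and that $\tensor{O}$ lies in its Euclidean closure. The plan is to obtain the first from the differential-geometric machinery of \cref{sec_nice}, and the second and third from a careful perturbation argument around $\tensor{O}$, exploiting that $\tensor{O}$ fails to be a valid input only because of a \emph{codimension-positive} degeneracy.

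First I would address openness. By \cref{prop_image}, $\Var{N}_{r;n_1,n_2,n_3}$ is an open submanifold of $\sigma_r$, and by \cref{prop_nice} its preimage $\Var{M}_{r;n_1,n_2,n_3}$ is Zariski-open in $\Var{S}^{\times r}$, hence open in the Euclidean topology; consequently $\Var{U}_\epsilon$ is open in $\Var{S}^{\times r}$, being the intersection of $\Var{M}_{r;n_1,n_2,n_3}$ with an open box. Since $\Phi_r$ restricted to $\widehat{\Var{M}}_{r;n_1,n_2,n_3}$ is a diffeomorphism onto $\Var{N}_{r;n_1,n_2,n_3}$ (again \cref{prop_image}), and since $\Var{U}_\epsilon$ is a union of $\mathfrak{S}_r$-orbits (the defining inequalities $\|\tensor{A}_i - \tensor{O}_i\|_F < \epsilon$ are not permutation-invariant, so here I should be slightly careful: I would either symmetrize, i.e.\ replace $\Var{U}_\epsilon$ implicitly by $\bigcup_{\pi} \pi \cdot \Var{U}_\epsilon$, which does not change $\Phi_r(\Var{U}_\epsilon)$, or simply note that $\Phi_r$ is an open map on $\Var{M}$ because it is a local diffeomorphism there by \cref{prop_permutation} combined with \cref{prop_image}), the image $\Phi_r(\Var{U}_\epsilon)$ is open in $\Var{N}_{r;n_1,n_2,n_3}$, hence open in $\sigma_r$. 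Finally $\Var{N}^* = \rho_Q^{-1}(\Var{N}_{r;n_1,n_2,2}) \cap \Var{N}_{r;n_1,n_2,n_3}$ is open in $\sigma_r$ as well: $\rho_Q$ is linear hence continuous, $\Var{N}_{r;n_1,n_2,2}$ is open in $\sigma_r(\Var{S}_{n_1,n_2,2})$ by \cref{prop_image} — and one must check $\rho_Q^{-1}$ of it is relatively open in $\sigma_r(\Var{S}_{n_1,n_2,n_3})$, which follows since the complement of $\Var{N}_{r;n_1,n_2,2}$ in $\sigma_r$ for the $n_1\times n_2\times 2$ format is closed and $\rho_Q$ maps $\sigma_r(n_1,n_2,n_3)$ into $\sigma_r(n_1,n_2,2)$. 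Intersecting two open sets gives $\Var{O}_\epsilon$ open.

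The substantive part is showing $\tensor{O} \in \overline{\Var{O}_\epsilon}$, which simultaneously forces $\Var{O}_\epsilon \neq \emptyset$. Here I would construct an explicit sequence. Start from the orthogonal factor matrices $A', B', C'$ defining $\tensor{O}$ in \cref{eqn_cot}. The tensor $\tensor{O}$ itself is $r$-nice (orthogonally decomposable tensors are identifiable and well-conditioned; one checks conditions (i)--(v) of \cref{def_nice} directly, using $u_1$-positivity after a harmless sign normalization for (v)), so $\tensor{O} \in \Var{N}_{r;n_1,n_2,n_3}$. What fails is $\tensor{O} \in \rho_Q^{-1}(\Var{N}_{r;n_1,n_2,2})$: as computed in the excerpt, $\rho_Q(\tensor{O})$ has all but one of its third-mode vectors parallel, so $\rho_Q(\tensor{O})$ is not $r$-identifiable and has infinite condition number. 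The key point is that this is an \emph{unstable} degeneracy: a generic arbitrarily small perturbation of the third factor matrix $C'$, say $C'_t = C' + t\,\Delta$ for a suitable fixed $\Delta \in \R^{n_3\times r}$ and $t \to 0$, will make the vectors $Q^T \vect{c}_i(t)$ pairwise non-parallel and in general linear position in $\R^2$, hence make $\rho_Q(\tensor{A}(t))$ satisfy Kruskal's criterion for the $n_1\times n_2\times 2$ format, so $\rho_Q(\tensor{A}(t)) \in \Var{N}_{r;n_1,n_2,2}$. Meanwhile the perturbed decomposition $(\tensor{A}_1(t),\ldots,\tensor{A}_r(t))$ with $\tensor{A}_i(t) = \vect{a}_i' \otimes \vect{b}_i' \otimes \vect{c}_i(t)$ stays in $\Var{M}_{r;n_1,n_2,n_3}$ for small $t$ (an open condition, and $\tensor{O}$'s decomposition is in $\Var{M}$, using \cref{prop_nice}), and clearly $\|\tensor{A}_i(t) - \tensor{O}_i\|_F \to 0$, so the tuple lies in $\Var{U}_\epsilon$ once $t$ is small relative to $\epsilon$. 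Hence $\tensor{A}(t) = \Phi_r(\tensor{A}_1(t),\ldots,\tensor{A}_r(t)) \in \Phi_r(\Var{U}_\epsilon) \cap \Var{N}^* = \Var{O}_\epsilon$ for small $t$, and $\tensor{A}(t) \to \tensor{O}$, giving $\tensor{O} \in \overline{\Var{O}_\epsilon}$.

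The main obstacle I anticipate is the bookkeeping in the perturbation step: one must exhibit a single perturbation direction $\Delta$ (or argue via a dimension/genericity count) that \emph{simultaneously} (a) keeps the decomposition $r$-nice in the $n_1\times n_2\times n_3$ format — which is automatic by openness — and (b) pushes $\rho_Q$ of it \emph{into} $\Var{N}_{r;n_1,n_2,2}$, i.e.\ resolves the parallelism of the projected third-mode vectors and lands in the region where Kruskal's criterion (via \cref{lem_identifiable} and \cref{kruskal}, with $r \le n_2$) certifies $r$-identifiability and finite condition number. The cleanest route is probably to argue that the set of decompositions near $\tensor{O}$'s whose $\rho_Q$-image is \emph{not} in $\Var{N}_{r;n_1,n_2,2}$ is a proper Zariski-closed subset of a neighborhood (its defining conditions — non-identifiability, SGLP failure, etc., all pulled back through the linear map $\rho_Q$ and $\Phi_r$ — are Zariski-closed and proper since $\rho_Q$ is surjective onto the relevant secant variety and a generic such perturbation is fine), so a generic small perturbation works; then any $\Delta$ outside that subvariety does the job. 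This is essentially the same template as the proof of \cref{prop_nice}, applied to the composite $\rho_Q \circ \Phi_r$ rather than $\Phi_r$ alone.
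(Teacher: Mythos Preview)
Your proposal is correct and follows essentially the same strategy as the paper: openness from the local-diffeomorphism properties of $\Phi_r$ and $\widehat{\pi}$, and the closure statement from the fact that the failure of $\rho_Q(\tensor{A})$ to be $r$-nice is a proper Zariski-closed condition that a generic perturbation avoids.

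The organization differs in one useful respect. You argue \emph{locally}: you assert that $(\tensor{O}_1,\ldots,\tensor{O}_r)\in\Var{M}_{r;n_1,n_2,n_3}$ and then perturb $C'$ along a generic direction $\Delta$ to land in $\Var{O}_\epsilon$. This is fine, but it forces you to verify condition~(v) of \cref{def_nice} for the explicitly constructed $C'$, which is where your ``harmless sign normalization'' remark sits; it is indeed harmless, but it requires choosing $A'$, $B'$ and $Q^\perp$ so that the first row of $C'=U(I_{n_3\times r}-\tfrac{2}{n_3}\vect{1}\vect{1}^T)\diag(\pm1)$ has no zero entry. The paper instead proves the stronger \emph{global} statement that $\Var{N}^*$ is dense in $\sigma_r(\Var{S}_{n_1,n_2,n_3})$, via an explicit lifting: for an arbitrary $\tensor{A}\in\sigma_r$, write $\vect{c}_i=Q\vect{z}_i+Q^\perp\vect{z}_i'$, perturb $(\vect{a}_i,\vect{b}_i,\vect{z}_i)$ generically inside $\Var{M}_{r;n_1,n_2,2}$, and lift back by $\vect{c}_i^{(k)}=Q\vect{z}_i^{(k)}+Q^\perp\vect{z}_i'$. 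This buys the paper freedom from any special properties of $\tensor{O}$: once $\Var{N}^*$ is dense and $\Phi_r(\Var{U}_\epsilon)$ is open with $\tensor{O}$ in its closure (which needs only that $\Var{M}$ is dense in $\Var{S}^{\times r}$, not that $\tensor{O}$'s decomposition lies in it), the conclusion is immediate. Your route is shorter for this specific lemma; the paper's route yields a reusable density statement and sidesteps the bookkeeping around condition~(v).
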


The next result allows us to apply \cref{lem_samecond,lem_nodecrease} for tensors in $\Var{O}_\epsilon$.
\begin{lemma} \label{lem_localisometry}
Let $\epsilon > 0$ be sufficiently small, and let $A',B',C'$ be as in the definition of $\tensor{O}$ in \cref{eqn_cot}. Then, there exists a constant $S > 0$ so that for all $(\tensor{A}_1,\ldots,\tensor{A}_r) \in \Var{O}_\epsilon$ with factor matrices $A,B,C$, {where both $A$ and $B$ have unit-norm columns, the following bounds} holds:
 \[
  \| A - A'  \|_F \le S \epsilon,\quad \| B - B'  \|_F \le S \epsilon \text{ and } \| C - C' \|_F \le S \epsilon.
 \]
Moreover, the columns of $B\odot C$ satisfy $1- S\epsilon \leq \Vert \vect{b}_i \otimes \vect{c}_i \Vert_F \leq 1 + S\epsilon$.
\end{lemma}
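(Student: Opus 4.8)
The plan is to reduce the statement to a Lipschitz estimate for a local inverse of the Segre parametrization $\Psi_{n_1,n_2,n_3}$, applied separately to each rank-$1$ term. Note first that the estimate at the level of the rank-$1$ \emph{terms} comes for free: since $\tensor{O}$ is $r$-nice it is $r$-identifiable, so its rank-$1$ terms $\tensor{O}_1,\ldots,\tensor{O}_r$ are pairwise distinct and $d_0:=\tfrac12\min_{i\neq j}\|\tensor{O}_i-\tensor{O}_j\|_F>0$. If $\epsilon<d_0$ and $\tensor{A}\in\Var{O}_\epsilon$, then $\tensor{A}=\Phi_r(\tuple{a})$ for some $\tuple{a}\in\Var{U}_\epsilon$, and since $\tensor{A}\in\Var{N}^*$ is $r$-identifiable the entries of $\tuple{a}$ form the unique CPD $\{\tensor{A}_1,\ldots,\tensor{A}_r\}$ of $\tensor{A}$; relabelling (uniquely, as $\epsilon<d_0$) gives $\|\tensor{A}_i-\tensor{O}_i\|_F<\epsilon$ for all $i$. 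The real content of the lemma is to pass from this to estimates on \emph{normalized factor vectors}.

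Fix $i$ and write $\tensor{O}_i=\Psi_{n_1,n_2,n_3}(1,\vect{a}_i',\vect{b}_i',\vect{c}_i')$, which is legitimate since $\vect{a}_i',\vect{b}_i',\vect{c}_i'$ are unit vectors. Because $\Psi_{n_1,n_2,n_3}$ is a local diffeomorphism, there are open neighbourhoods $V_i$ of $(1,\vect{a}_i',\vect{b}_i',\vect{c}_i')$ and $W_i\subset\Var{S}$ of $\tensor{O}_i$ such that $\Psi_{n_1,n_2,n_3}|_{V_i}$ is a diffeomorphism onto $W_i$; let $\Psi_{V_i}^{-1}$ be its smooth inverse, so $\Psi_{V_i}^{-1}(\tensor{O}_i)=(1,\vect{a}_i',\vect{b}_i',\vect{c}_i')$. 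Shrinking to a compact neighbourhood $\Var{K}_i\subset W_i$ of $\tensor{O}_i$, the smooth map $\Psi_{V_i}^{-1}$ is Lipschitz on $\Var{K}_i$ with respect to the ambient Frobenius distance, say with constant $L_i$; put $L:=\max_i L_i$. Taking $\epsilon$ small enough that $\{\tensor{T}\in\Var{S}:\|\tensor{T}-\tensor{O}_i\|_F<\epsilon\}\subset\Var{K}_i$ for every $i$ simultaneously, and writing $\Psi_{V_i}^{-1}(\tensor{A}_i)=(\lambda_i,\vect{a}_i,\vect{b}_i,\vect{w}_i)$, we obtain
\[
|\lambda_i-1|+\|\vect{a}_i-\vect{a}_i'\|+\|\vect{b}_i-\vect{b}_i'\|+\|\vect{w}_i-\vect{c}_i'\|\;\le\;L\,\|\tensor{A}_i-\tensor{O}_i\|_F\;<\;L\epsilon .
\]
Here $\tensor{A}_i=\lambda_i\,\vect{a}_i\otimes\vect{b}_i\otimes\vect{w}_i$ with $\|\vect{a}_i\|=\|\vect{b}_i\|=1$, so setting $\vect{c}_i:=\lambda_i\vect{w}_i$ yields a factorization of $\tensor{A}_i$ whose factor matrices $A=[\vect{a}_i]_i$ and $B=[\vect{b}_i]_i$ have unit-norm columns; for $\epsilon$ small this is, up to the (irrelevant) choice of column signs of $A',B'$, precisely the normalization and ordering meant in the statement, being the unique unit-norm factorization with $\langle\vect{a}_i,\vect{a}_i'\rangle,\langle\vect{b}_i,\vect{b}_i'\rangle>0$ matched to the labels of the $\tensor{O}_i$.

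Assembling the bounds is then routine. From the displayed inequality, $\|\vect{a}_i-\vect{a}_i'\|<L\epsilon$, $\|\vect{b}_i-\vect{b}_i'\|<L\epsilon$, and, using $\|\vect{w}_i\|=\|\vect{c}_i'\|=1$,
\[
\|\vect{c}_i-\vect{c}_i'\|=\|\lambda_i\vect{w}_i-\vect{c}_i'\|\le|\lambda_i-1|\,\|\vect{w}_i\|+\|\vect{w}_i-\vect{c}_i'\|<2L\epsilon .
\]
Squaring and summing over $i=1,\ldots,r$ gives $\|A-A'\|_F,\|B-B'\|_F<\sqrt{r}\,L\epsilon$ and $\|C-C'\|_F<2\sqrt{r}\,L\epsilon$, so $S:=2\sqrt{r}\,L$ handles the first three inequalities. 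Finally $\|\vect{b}_i\otimes\vect{c}_i\|_F=\|\vect{b}_i\|\,\|\vect{c}_i\|=|\lambda_i|$, and (shrinking $\epsilon$ so that $\lambda_i>0$) $|\lambda_i-1|<L\epsilon\le S\epsilon$ gives $1-S\epsilon\le\|\vect{b}_i\otimes\vect{c}_i\|_F\le1+S\epsilon$. The step that will need the most care in a full write-up is the one in the third paragraph: choosing the local inverses $\Psi_{V_i}^{-1}$ and the radius $\epsilon$ uniformly in $i$, and checking that the factorization they produce is genuinely the one appearing in the statement rather than one obtained from it by a sign flip or permutation — i.e.\ reconciling the inherent non-uniqueness of a CPD with the phrasing ``with factor matrices $A,B,C$''. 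Everything else is the local Lipschitz property of a smooth map on a compact set together with the triangle inequality.
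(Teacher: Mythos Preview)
Your proof is correct and follows essentially the same route as the paper: pull back through the Segre parametrization, use that a smooth inverse is locally Lipschitz, and conclude. The paper streamlines the one step you flag as delicate by introducing the modified parametrization
\[
\Psi^{**}:\mathbb{S}^+(\R^{n_1})\times\mathbb{S}^+(\R^{n_2})\times(\R^{n_3}\setminus\{0\})\to\Var{S},\quad(\vect{x},\vect{y},\vect{z})\mapsto\vect{x}\otimes\vect{y}\otimes\vect{z},
\]
which absorbs the scalar into the third factor from the outset; this is a \emph{global} diffeomorphism onto its image, so no local inverses need to be patched together and the sign/permutation ambiguity you worry about does not arise (one simply arranges $A',B'$ to have positive first entries). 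For the last inequality the paper expands $\vect{b}_i\otimes\vect{c}_i-\vect{b}_i'\otimes\vect{c}_i'$ via the triangle inequality, whereas your observation that $\|\vect{b}_i\otimes\vect{c}_i\|_F=|\lambda_i|$ is slightly slicker.
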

This lemma is proved in the appendix.
Combining these two lemmata with \cref{lem_samecond,lem_nodecrease}, we get the following important corollary.
\begin{cor}[An $r$-nice bad tensor] \label{hyp_bad_tensor}
Let $\epsilon > 0$ be sufficiently small. Let
\(
\tensor{A} = \sum_{i=1}^r \vect{a}_i \otimes \vect{b}_i \otimes \vect{c}_i
\)
be an element of $\Var{O}_\epsilon$ such that the factor matrices $A \in \R^{n_1 \times r}$ and $B \in \R^{n_2 \times r}$ have unit-norm columns. Then,
\begin{enumerate}
 \item $\tensor{A} \in \Var{N}^*$, i.e., $\tensor{A}$ is $r$-nice and its projection $\tensor{B} = \rho_Q(\tensor{A})$ is also $r$-nice;
 \item there exists an $A' \in \R^{n_1 \times r}$ with orthonormal columns, such that $\|A  - A'\|_F \le S\epsilon$; and
 \item $B\odot C \in \R^{n_3 \times r}$ has columns whose norms are bounded by $1 - S\epsilon \le \|\vect{b}_i\otimes \vect{c}_i\| \le 1 + S\epsilon$.
\end{enumerate}
\end{cor}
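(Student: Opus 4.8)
The plan is to read off the three assertions directly from \cref{lem_opendense} and \cref{lem_localisometry}; notably, neither \cref{lem_samecond} nor \cref{lem_nodecrease} is needed for the corollary's statement, as those two are only fed the \emph{conclusion} of this corollary in the subsequent argument. So the proof is short. I would first fix $\epsilon > 0$ small enough that the ``sufficiently small'' hypotheses of both \cref{lem_opendense} and \cref{lem_localisometry} are in force, and let $S > 0$ be the constant furnished by \cref{lem_localisometry}.

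For part (1): \cref{lem_opendense} gives $\Var{O}_\epsilon = \Phi_r(\Var{U}_\epsilon) \cap \Var{N}^* \subseteq \Var{N}^*$, so $\tensor{A} \in \Var{N}^*$. Unfolding the definition $\Var{N}^* = \rho_Q^{-1}(\Var{N}_{r;n_1,n_2,2}) \cap \Var{N}_{r;n_1,n_2,n_3}$ from \cref{def_pencil_algorithm} then yields both $\tensor{A} \in \Var{N}_{r;n_1,n_2,n_3}$, i.e.\ $\tensor{A}$ is $r$-nice, and $\tensor{B} = \rho_Q(\tensor{A}) \in \Var{N}_{r;n_1,n_2,2}$, i.e.\ the projection $\tensor{B}$ is $r$-nice. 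In particular $\tensor{A}$ is $r$-identifiable, which is what makes the hypothesis ``$A$ and $B$ have unit-norm columns'' unambiguous and non-vacuous: such a decomposition is obtained by rescaling each rank-$1$ term of the (unique) CPD so that its first two factors are unit vectors and absorbing the scalars into the third factor.

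For parts (2) and (3): I would apply \cref{lem_localisometry} to the chosen normalized decomposition of $\tensor{A} \in \Var{O}_\epsilon$. Taking $A' := [\vect{a}_i']_i$ to be the first factor matrix appearing in the definition of $\tensor{O}$ in \cref{eqn_cot}, which has orthonormal columns by construction, the lemma delivers $\|A - A'\|_F \le S\epsilon$, which is (2), and its closing sentence delivers $1 - S\epsilon \le \|\vect{b}_i \otimes \vect{c}_i\|_F \le 1 + S\epsilon$ for every $i$, which is (3). There is no genuine obstacle here, since everything reduces to a direct appeal to results already established (in the appendix); the real content sits in those lemmata. The only point requiring a little care is the bookkeeping on $\epsilon$: it must be small enough for \cref{lem_opendense} and \cref{lem_localisometry} simultaneously, and if the corollary is to be chained into \cref{lem_samecond,lem_nodecrease} afterwards one should also ensure $S\epsilon$ lies under the thresholds $\nu$ those lemmata demand, using e.g.\ $\|\vect{b}_i \otimes Q^T\vect{c}_i\| \le \|\vect{b}_i \otimes \vect{c}_i\| \le 1 + S\epsilon$ to control the projected rank-$1$ terms.
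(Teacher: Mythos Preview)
Your proposal is correct and matches the paper's approach: the paper states the corollary as an immediate consequence of \cref{lem_opendense,lem_localisometry} (together with \cref{lem_samecond,lem_nodecrease}), and you have correctly unpacked the argument. Your observation that \cref{lem_samecond,lem_nodecrease} are not actually needed to establish (1)--(3) is accurate; the paper's phrasing ``combining these two lemmata with \cref{lem_samecond,lem_nodecrease}'' is best read as signaling that the corollary packages the hypotheses required to \emph{invoke} those lemmata downstream, not that they contribute to its proof.
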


\subsection{Proof of \cref{thm_neighborhood}}
Let {$\tensor{A} \in \mathcal{O}_\epsilon$ be as in \cref{hyp_bad_tensor}. Its floating-point representation is $\widetilde{\tensor{A}} := \operatorname{fl}(\tensor{A})$.} We show that the excess factor $\omega(\tensor{A})$ from \cref{def_mu} is proportional to $\epsilon^{-1}$.

We assume that the output of step S1 is the best possible numerical result when providing $\tensor{A}$ as input, namely the floating-point representation of
\(
\tensor{B} = \rho_Q(\tensor{A}) = \sum_{i=1}^r \vect{a}_i \otimes \vect{b}_i \otimes (Q^T \vect{c}_i),
\)
i.e., $\widetilde{\tensor{B}} = \operatorname{fl}(\tensor{B}) = \operatorname{fl}( \rho_Q( \tensor{A} ) )$. For streamlining the analysis, we ignore further compounding of roundoff errors, assuming the best possible case in which the PBA manages to execute steps S2, S3 and S4 exactly (perhaps by invoking an oracle).
Let $\{\vect{a}_1,\ldots,\vect{a}_r\} = \widehat{\theta}(\widetilde{\tensor{B}})$ and $A := [\vect{a}_i]_i$. Then, by the same construction as in \cref{sec_implications_cn}, we have
\[
 \min_{\pi\in\mathfrak{S}_r} \| A  - \widetilde{A} P_\pi  \|_F \lesssim \kappa[\widehat{\theta}](\tensor{B}) \cdot \| \tensor{B} - \widetilde{\tensor{B}} \|_F.
\]
In fact, a small component in the direction of the worst perturbation is expected: From the concentration-of-measure phenomenon, assuming that the perturbation $\tensor{B}-\widetilde{\tensor{B}}$ is random with no preferred direction, it follows with high probability that the component of the perturbation in the worst direction is of size comparable to $( r \dim \Var{S}_{n_1,n_2,2} )^{-\frac{1}{2}} = ( r (n_1 + n_2) )^{-\frac{1}{2}}$. See Armentano's work \cite{Armentano} for an analysis of the impact of this consideration in the observed value of the so-called stochastic condition number.
It follows that there exists a number $1 \ge \beta_1 > 0$ such that
\[
 {\min_{\pi\in\mathfrak{S}_r} \| A  - \widetilde{A} P_\pi  \|_F}
 = \beta_1 \cdot \kappa[\widehat{\theta}](\tensor{B}) \cdot \| \tensor{B} - \widetilde{\tensor{B}} \|_F \ge \beta_1 \cdot \kappa[\widehat{\theta}|_{\Var{N}}](\tensor{B}) \cdot \| \tensor{B} - \widetilde{\tensor{B}} \|_F,
\]
where the last inequality is by definition of condition numbers and restrictions of maps.
Applying \cref{lem_samecond} and using the properties from \cref{hyp_bad_tensor}, yields
\[
  {\min_{\pi\in\mathfrak{S}_r} \| A  - \widetilde{A} P_\pi  \|_F}
  \ge \frac{\beta_1}{10 r} ( \kappa[\tau_{r;n_1,n_2,2}](\tensor{B}) - 10 r ) \cdot \| \tensor{B} - \widetilde{\tensor{B}} \|_F,
\]
where $Z := Q^T C$.
Assume that the left-hand side is bounded from above by $1$.
Regardless of the particular $\{\widetilde{\vect{b}}_i \otimes \widetilde{\vect{c}}_i\}_i$ that the PBA computes in step S3, invoking \cref{lem_nodecrease} shows that after completion of step S4 the forward error satisfies
\[
\min_{\pi \in \mathfrak{S}_r}\| A \odot B \odot C - (\widetilde{A} \odot \widetilde{B} \odot \widetilde{C})P_\pi \|_F
\ge \frac{(1-S\epsilon) \beta_1 \sqrt{3/4} }{10 r} \bigl( \kappa[\tau_{r;n_1,n_2,2}](\tensor{B}) - 10 r \bigr) \cdot \| \tensor{B} - \widetilde{\tensor{B}} \|_F.
\]
Dividing both sides of this expression by $\kappa[\tau_{r;n_1,n_2,n_3}](\tensor{A}) \cdot \|\tensor{A} - \widetilde{\tensor{A}}\|_F$ gives the excess factor $\omega(\tensor{A})$:
\begin{align} \label{eqn_proof_nearly}
\omega(\tensor{A})
\ge \frac{\kappa[\tau_{r;n_1,n_2,2}](\tensor{B}) - 10 r}{\kappa[\tau_{r;n_1,n_2,n_3}](\tensor{A})} \cdot \frac{(1-S\epsilon) \beta_1 \sqrt{3} }{ 20 r } \cdot \frac{\| \tensor{B} - \widetilde{\tensor{B}} \|_F}{\|\tensor{A} - \widetilde{\tensor{A}}\|_F}.
\end{align}
We continue by bounding the factor $\| \tensor{B} - \widetilde{\tensor{B}} \|_F \|\tensor{A} - \widetilde{\tensor{A}}\|_F^{-1}$ in this inequality.
Since $\widetilde{\tensor{B}} = \operatorname{fl}(\tensor{B})$ and $\widetilde{\tensor{A}} = \operatorname{fl}(\tensor{A})$, we have in the standard model of floating-point arithmetic,
\[
 \| \tensor{A} - \widetilde{\tensor{A}} \|_F^2 = \sum_{i_1=1}^{n_1} \sum_{i_2=1}^{n_2} \sum_{i_3=1}^{n_3} ( a_{i_1,i_2,i_3} - (1 + \delta_{i_1,i_2,i_3}) a_{i_1,i_2,i_3} )^2 \le \epsilon_u^2 \| \tensor{A} \|_F^2,
\]
where $|\delta_{i_1,i_2,i_3}|\le \epsilon_u$, and
\[
 \| \tensor{B} - \widetilde{\tensor{B}} \|_F^2
 =
 \sum_{i_1=1}^{n_1} \sum_{i_2=1}^{n_2} \sum_{i_3=1}^{n_3} ( b_{i_1,i_2,i_3} - (1 + \dot{\delta}_{i_1,i_2,i_3}) b_{i_1,i_2,i_3} )^2 = \sum_{i_1=1}^{n_1} \sum_{i_2=1}^{n_2} \sum_{i_3=1}^{n_3} \dot{\delta}_{i_1,i_2,i_3}^2 b_{i_1,i_2,i_3}^2,
\]
where $|\dot{\delta}_{i_1,i_2,i_3}| \le \epsilon_u$. There exists a $\beta_2 \ge 0$ so that
\(
 \| \tensor{B} - \widetilde{\tensor{B}} \|_F = \beta_2 \epsilon_u \| \tensor{B} \|_{F}.
\)
While a detailed analysis of the value of $\beta_2$ is outside of the scope of this work, it is reasonable to assume that~$\beta_2$ is not too small,\footnote{We can take guidance from \cite{Brent1973} where the root mean squared representation error is computed for some number systems, assuming a logarithmic distribution of the real numbers $b_{i_1,i_2,i_3}$. In this case, \cite[Section~V]{Brent1973} shows that, after plugging in the parameters of double-precision IEEE floating-point arithmetic \cite[Section~2.3]{higham}, one has
\[
\frac{1}{n_1 n_2 n_3} \sum_{i_1=1}^{n_1} \sum_{i_2=1}^{n_2} \sum_{i_3=1}^{n_3} \dot{\delta}_{i_1,i_2,i_3}^2 \approx \left( \frac{308}{\sqrt{3}} \log 2 \right)^2 \epsilon_u^2.
\]
If all $b_{i_1,i_2,i_3}$ are roughly proportional, i.e., $b_{i_1,i_2,i_3}^2 \approx \frac{1}{n_1 n_2 n_3} \| \tensor{B} \|_F^2$, then $\beta_2 \approx \frac{308}{\sqrt{3}} \log 2 \approx 123$.}
say $\beta_2 \ge 10^{-1}$.
Hence, we need bounds on the norms of $\tensor{A}$ and $\tensor{B}$. To this end, the following well-known result is useful.
\[
\Vert \tensor{O}\Vert_F
= \Bigl\| (A' \otimes B' \otimes C') \sum_{i=1}^r \vect{e}_i \otimes \vect{e}_i \otimes \vect{e}_i \Bigr\|_F
= \Bigl\| \sum_{i=1}^r \vect{e}_i \otimes \vect{e}_i \otimes \vect{e}_i \Bigr\|_F = \sqrt{r},
\]
where $\vect{e}_i$ is the $i$th standard basis vector of $\R^r$.
Let $\tensor{E}_i: = \tensor{A}_i-\tensor{O}_i$. Note that $\Vert \tensor{E}_i\Vert_F\leq \epsilon$.
The norms of $\tensor{A}$ and $\tensor{B}$ are then estimated as follows:
\begin{align*}
 \|\tensor{A}\|_F
= \left\| \sum_{i=1}^r \tensor{A}_i \right\|_F
= \left\| \sum_{i=1}^r (\tensor{O}_i + \tensor{E}_i) \right\|_F
 \le \|\tensor{O}\|_F + \sum_{i=1}^r \| \tensor{E}_i \|_F = \sqrt{r}(1 + \sqrt{r}. \epsilon),
\end{align*}
Exploiting the linearity of the multilinear multiplication $\rho_Q$ we also have
\begin{align*}
 \|\tensor{B}\|_F
 = \left\| \rho_Q(\tensor{A}) \right\|_F
 = \left\|  \rho_Q(\tensor{O}) + \sum_{i=1}^r \rho_Q( \tensor{E}_i)\right\|_F
 &\ge \| \rho_Q(\tensor{O}) \|_F - \sum_{i=1}^r \| \rho_Q(\tensor{E}_i) \|_F
 \ge \| Q^T C' \|_F - r \epsilon,
\end{align*}
where we used that the $3$-flattening of $\rho_Q(\tensor{O})$ is $(Q^T C') (A' \odot B')^T$ and, since $A'$ and $B'$ have orthonormal columns, that $\|\rho_Q(\tensor{O})\|_F = \| Q^T C'\|_F$.
By construction, $Q^T C' = \tfrac{2}{n_3} \left[ \begin{smallmatrix} -1 & 1 & \dots & 1\\ -1 & 1 & \dots & 1\end{smallmatrix}\right]$, so that we have $\Vert Q^T C'\Vert_F = \sqrt{2r} \tfrac{2}{n_3}$. We have thus shown that
\begin{equation}\label{eqn_mu}
\omega(\tensor{A}) \ge \frac{\kappa[\tau_{r;n_1,n_2,2}](\tensor{B}) - 10 r}{\kappa[\tau_{r;n_1,n_2,n_3}](\tensor{A})} \cdot \frac{(1-S\epsilon) \beta_1 \beta_2 \sqrt{3} }{ 10 r } \cdot \frac{2\sqrt{2r}  }{n_3\sqrt{r}(1 + \sqrt{r} \epsilon)}.
\end{equation}
The condition number $\kappa[\tau_{r;n_1,n_2,n_3}](\tensor{A})$ is bounded as follows.
Let $\tuple{o} = ( \tensor{O}_1,\ldots,\tensor{O}_r ) \in \Var{S}^{\times r}$ be an ordered CPD of $\tensor{O}$. By \cref{lem_kappa_identity} (2), for all $\tensor{A}=\tensor{A}_1+\cdots+\tensor{A}_r$ in $\Phi_r(\Var{U}_\epsilon)$, we have
$\kappa[\tau_{r;n_1,n_2,n_3}](\tensor{A}) = \kappa(\tuple{a}),$ where $\tuple{a}=(\tensor{A}_1,\ldots,\tensor{A}_r)$ and $\kappa(\tuple{a})$ being the condition number of the tensor rank decomposition from \cref{def_kappa}. Furthermore, $\kappa(\tuple{o})=1$ by \cite[Proposition 5.2]{BV2017}. From this \cite[Theorem 1.1]{BV2017} implies that $\kappa(\tuple{o})$ is the classic spectral $2$-norm of the derivative of $\Phi_{\tuple{o}}^{-1}$
Since $\Phi_{\tuple{o}}^{-1}$ is smooth and the spectral norm is a Lipschitz-continuous function with Lipschitz constant $1$, it follows that there exists a Lipschitz constant $\ell > 0$ such that for sufficiently small $\epsilon>0$, we have
$|\kappa[\tau_{r;n_1,n_2,n_3}](\tensor{A})-\kappa[\tau_{r;n_1,n_2,n_3}](\tensor{O})| \le \ell \|\tensor{A}-\tensor{O}\|_F$ for all $\tensor{A}\in\mathcal{O}_\epsilon$. Hence,
\begin{align}\label{eqn_proof_bound_piece2}
\kappa[\tau_{r;n_1,n_2,n_3}](\tensor{A}) \le 1 + \ell \epsilon.
\end{align}
Finally, we bound $\kappa[\tau_{r;n_1,n_2,2}](\tensor{B})$.
Let $d_s = r \dim \Var{S}_{n_1,n_2,2}$, and recall that $\vect{z}_i := Q^T \vect{c}_i$.
Recall $T$ from the proof of \cref{lem_kappa_lower_nn2}, applying it to $\tensor{B}$'s CPD. Consider the next submatrix of $T$,
\[
T' := \begin{bmatrix}
I_{n_1} \otimes \vect{b}_1 \otimes \frac{\vect{z}_1}{\|\vect{z}_1\|} & \vect{a}_2 \otimes I_{n_2} \otimes \frac{\vect{z}_2}{\|\vect{z}_2\|}
\end{bmatrix}
\text{ and set }
\vect{v}' := \begin{bmatrix} \|\vect{z}_1\| \vect{a}_2 \\ \|\vect{z}_2\| \vect{b}_1 \end{bmatrix}.
\]
Note that $\|\vect{v}'\|^2 = \|\vect{z}_1\|^2 + \|\vect{z}_2\|^2$.
From the identification of condition numbers from \cref{lem_kappa_identity} and from the steps in the proof of \cref{lem_kappa_lower_nn2} it follows that
\begin{align} \label{eqn_proof_bound_kappa_down}
\kappa[\tau_{r;n_1,n_2,2}](\tensor{B})
= \left( \min_{\vect{v} \in \R^{d_s}} \frac{\| T \vect{v} \|}{\|\vect{v}\|} \right)^{-1}
\ge \frac{\|\vect{v}'\|}{\| T' \vect{v}' \|} = \frac{\|\vect{v}'\|}{\| \vect{a}_2 \otimes \vect{b}_1 \otimes (\vect{z}_1 + \vect{z}_2) \|} = \frac{\|\vect{v}'\|}{\| \vect{z}_1 + \vect{z}_2 \|}.
\end{align}
We already showed above that $\| Z - Q^T C' \|_F \le S \epsilon$ and that $\vect{z}_i' := Q^T \vect{c}_i' = \tfrac{2}{n_3} (-1)^i \left[ \begin{smallmatrix} 1 \\ 1 \end{smallmatrix}\right]$ for $i=1,2$. Note that $\vect{z}_1' + \vect{z}_2' = 0$. Consequently, we get the bounds
\begin{align*}
 \| \vect{z}_1 + \vect{z}_2 \| &= \| (\vect{z}_1' + \vect{z}_2') + (\vect{z}_1 - \vect{z}_1') + (\vect{z}_2 - \vect{z}_2') \| \le \|\vect{z}_1 - \vect{z}_1'\| + \|\vect{z}_2 - \vect{z}_2'\| \le \sqrt{2} S \epsilon, \text{ and }\\
 \|\vect{v}'\|^2 &= \|\vect{z}_1\|^2 + \|\vect{z}_2\|^2 \ge (\max\{0, \|\vect{z}_1'\| - \|\vect{z}_1 - \vect{z}_1'\|\})^2 + (\max\{0, \|\vect{z}_2'\| - \|\vect{z}_2 - \vect{z}_2'\|\})^2.
\end{align*}
Note that in the last inequality we can bound $\|\vect{z}_i'\| - \|\vect{z}_i - \vect{z}_i'\| \ge \tfrac{2\sqrt{2}}{n_3} - S\epsilon$ for $i=1,2$. Assuming that $\epsilon$ is sufficiently small, we obtain
\(
\|\vect{v}'\| \ge \frac{4}{n_3} - S\epsilon.
\)
Plugging all of these into \cref{eqn_proof_bound_kappa_down} yields
\begin{align} \label{eqn_proof_bound_piece3}
\kappa[\tau_{r;n_1,n_2,2}](\tensor{B}) \ge \frac{2}{n_3 S} \epsilon^{-1} - \frac{1}{2} = \mathcal{O}( \epsilon^{-1} ).
\end{align}
Plugging \cref{eqn_proof_bound_piece2,eqn_proof_bound_piece3}
into \cref{eqn_mu}, the proof of \cref{thm_neighborhood} is concluded. This ultimately completes the proof of \cref{thm_unstable}.

\begin{rem}
It is important to observe that the construction of the open set $\mathcal{O}_\epsilon$ depends on the projection operator $\rho_Q$ and, hence, on $Q \in \R^{n_3 \times 2}$. That is, we have shown that regardless of a choice of $Q$ that is independent of $\tensor{A}$, there exists an open set such where the PBA with projection $\rho_Q$ is unstable. The above construction does not automatically apply to situations where $Q$ is chosen \emph{as a function} of the input $\tensor{A}$.
\end{rem}

\section{Numerical experiments} \label{sec_numex}
We present the results of some numerical experiments in Matlab R2017b for supporting the main result and exemplifying the behavior of PBAs on third-order random CPDs. They were performed on a computer system consisting of two Intel Xeon E5-2697 CPUs (12 cores, 2.6GHz each) with 128GB of main memory.

Three PBAs are considered in the experiments below, which we refer to as \texttt{cpd\_pba}, \texttt{cpd\_pba2} and \texttt{cpd\_gevd}, respectively.\footnote{Both \texttt{cpd\_pba} and \texttt{cpd\_pba2} are provided in the ancillary files to this manuscript; they require some functionality from Tensorlab v3.0 \cite{Tensorlab}.} The first, \texttt{cpd\_pba}, is an ordinary implementation of the prototypical PBA discussed in \cref{sec_introduction}, using ST-HOSVD \cite{VVM2012} as orthogonal Tucker compression.
{\texttt{cpd\_pba2} computes the CPD by randomly projecting the input tensor $\tensor{A}$ with $\rho_Q$, then employing the \texttt{cpd} function from Tensorlab v3.0 to recover the two factor matrices $A$ and $B$, and finally computing $A \odot B \odot ((A\odot B)^\dagger \tensor{A}_{(3)}^T)^T$ to obtain a representative of the set of rank-$1$ tensors.}
The last PBA we consider is the \texttt{cpd\_gevd} function from Tensorlab v3.0. The analysis in \cref{sec_3o_unstable} does not strictly apply to the default settings\footnote{There is an option to use a random orthonormal projection, in which case the theory of this paper applies.} of \texttt{cpd\_gevd}, because it chooses the projection matrix $Q$ as a function of the input tensor $\tensor{A} \in \R^{n_1 \times n_2 \times n_3}$. Specifically, if $(U_1, U_2, U_3) \cdot \tensor{S} = \tensor{A}$ is the HOSVD \cite{Lathauwer2000}, then \texttt{cpd\_gevd} chooses $Q$ as the first two columns of $U_3$.

Throughout these experiments, the forward error of the TDP is evaluated as follows.
If $\tensor{A} = \sum_{i=1}^r \vect{a}_i \otimes \vect{b}_i \otimes \vect{c}_i$ and $\tensor{A}' = \sum_{i=1}^r \vect{a}_i' \otimes \vect{b}_i' \otimes \vect{c}_i'$, then we recall from \cref{eqn_ruleofthumb} that
\[
\mathrm{err}_\mathrm{forward} := \min_{\pi \in \mathfrak{S}_r} \| A \odot B \odot C - (A' \odot B' \odot C')P_\pi \|_F,
\]
is the forward error. Evaluating all $r!$\ permutations is a Herculean task when $r \gg 10$. Fortunately, when $\tensor{A}$ and $\tensor{A}'$ are very close, the optimal permutation can be found heuristically by solving the linear least-squares problem $\min_{X \in \R^{r \times r}} \| A\odot B\odot C - (A'\odot B'\odot C') X \|_F$ and then projecting the minimizer to the set of permutation matrices by setting the largest value in every row to $1$ and the rest to zero. In all experiments, the forward error is computed in this manner.

\subsection{The bad odeco tensor}
We start with an experiment to support the analysis of \cref{sec_3o_unstable}. Let $\rho_Q = \operatorname{Id} \otimes \operatorname{Id} \otimes Q^T$, where $Q \in \R^{n_3 \times 2}$, be the projection operator of the PBA. Let $\tensor{A}\in\R^{n_1 \times n_2 \times n_3}$ be an $r$-nice tensor whose CPD is $\epsilon$-close to the odeco tensor \cref{eqn_cot}, i.e.,
$\tensor{A} \in \Var{O}_\epsilon$, where the latter is as in \cref{lem_opendense}. According to the analysis in \cref{sec_3o_unstable}, the excess factor $\omega(\tensor{A})$ of a PBA with projection operator $\rho_Q$ should behave like $\mathcal{O}(\epsilon^{-1})$.

\begin{figure}
\centerline{\includegraphics[width=.8\textwidth]{./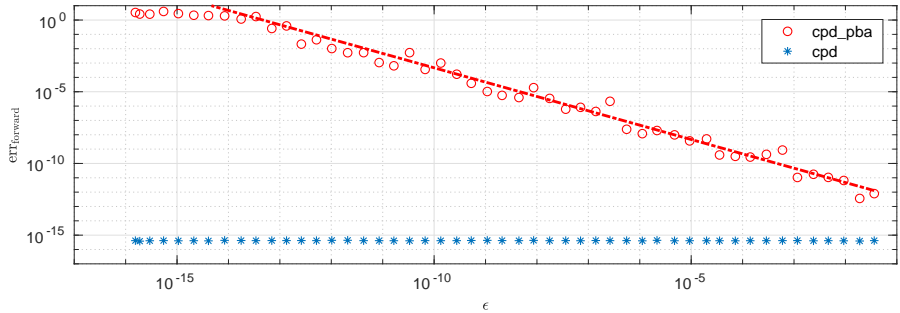}}
\caption{The forward error $\textrm{err}_\textrm{forward}$ of \texttt{cpd\_pba} and \texttt{cpd} for random tensors in $\Var{O}_\epsilon$ from \cref{lem_opendense} in function of $\epsilon$. The dashed line is $3.5 \cdot 10^{-14} \cdot \epsilon^{-1}$.}
\label{fig_cot_error}
\end{figure}

We consider $89 \times 29 \times 11$ tensors. $Q \in \R^{11 \times 2}$, $A' \in \R^{89 \times 10}$ and $B' \in \R^{29 \times 10}$ were respectively generated by computing the $Q$-factor of the $QR$-decomposition of a matrix with i.i.d.\ standard normal entries. The matrix $C' \in \R^{11 \times 10}$
was constructed as in the definition of \cref{eqn_cot}. Then, $\tensor{O}_i := \vect{a}_i' \otimes \vect{b}_i' \otimes \vect{c}_i'$ for $i=1,\ldots,10$. For $k = 1, \ldots, 50$, we constructed the randomly perturbed tensors $\tensor{P}_{k,i} = \tensor{O}_i + 2^{-k} \frac{\tensor{X}_{k,i}}{\|\tensor{X}_{k,i}\|_F}$,
where $\tensor{X}_{k,i}$ has i.i.d.\ standard normal entries. Using the \texttt{cpd} function with default settings from Tensorlab, we then computed the rank-$1$ approximations $\tensor{A}_{k,i}$ of $\tensor{P}_{k,i}$. Let $\epsilon_k := \max\,\{ \|\tensor{A}_{k,i} - \tensor{O}_i\|_F \}_i$, and then the corresponding tensor is $\tensor{A}_k = \sum_{i=1}^{10} \tensor{A}_{k,i}$,
so that $\tensor{A}_k \in \mathcal{O}_{\epsilon_k}$ with probability~$1$. Let $\tuple{a}_k^* = \{\tensor{A}_{k,1},\ldots,\tensor{A}_{k,10}\}$ denote the true CPD. A rank-$10$ CPD $\tuple{a}_k \in \Var{S}^{\times r}/\mathfrak{S}_r$ of $\tensor{A}_k$ was computed numerically using \texttt{cpd\_pba} and the forward error relative to $\tuple{a}_k^*$ was computed. We also applied \texttt{cpd} with default settings to $\tensor{A}_k$ for numerically computing another rank-$10$ CPD $\tuple{a}_k'$. The forward error between $\tuple{a}_k^*$ and $\tuple{a}_k'$ was recorded.

The results of the above experiment are shown in \cref{fig_cot_error}. \texttt{cpd} attains a forward error of approximately $4 \cdot 10^{-16}$ in all cases. As the random tensors are very close to the odeco tensor, their condition numbers are approximately $1$. A forward error equal to a small multiple of the machine precision $1.11 \cdot 10^{-16}$ is thus anticipated from a stable algorithm. The situation is dramatically different for \texttt{cpd\_pba}. Since the odeco tensor was chosen to behave badly with respect to the projection $\rho_Q$, we expect from \cref{sec_3o_unstable} that the forward error of the PBA grows like the excess factor $\omega = \mathcal{O}( \epsilon^{-1} )$.
The dashed line in \cref{fig_cot_error} shows the result of fitting the model $k \epsilon^{-1}$ to the data with $\epsilon > 10^{-14}$. As can be seen, the experimental data match the predictions from the theory in \cref{sec_3o_unstable} very well, specifically with regard to the growth rate of the excess factor.

\subsection{Distribution of the excess factors}
The previous experiment illustrated the forward error in worst possible case that we know of, mainly to illustrate \cref{thm_unstable}. Based on the construction in \cref{sec_3o_unstable}, it is not reasonable to expect that this will correspond to the typical behavior.
However, the next experiment shows that, unfortunately, one should typically expect a loss of precision of at least a few digits.

\begin{figure}[tb]
\centerline{\subfloat{\includegraphics[height=1.9in]{./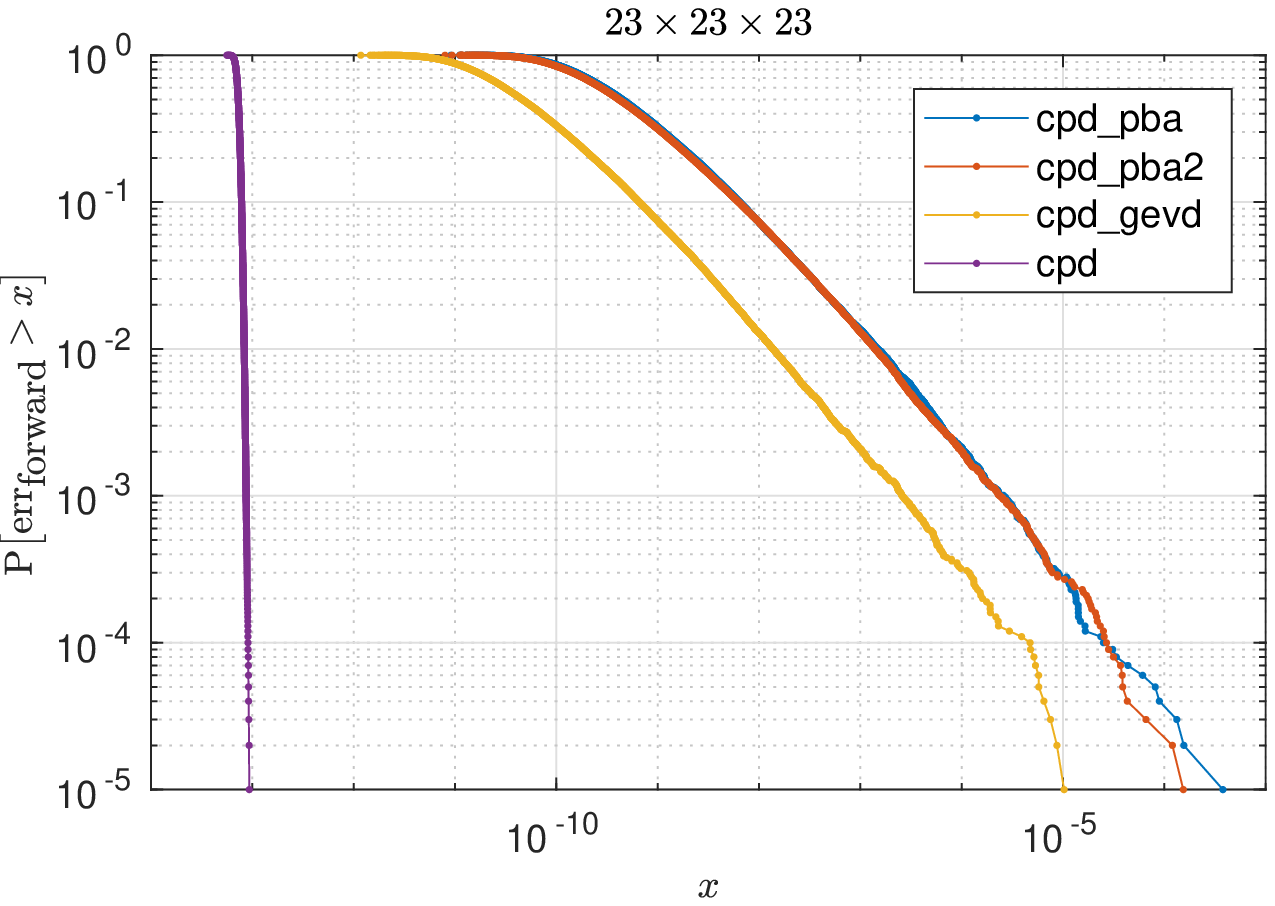}} \hspace{1em} \subfloat{\includegraphics[height=1.9in]{./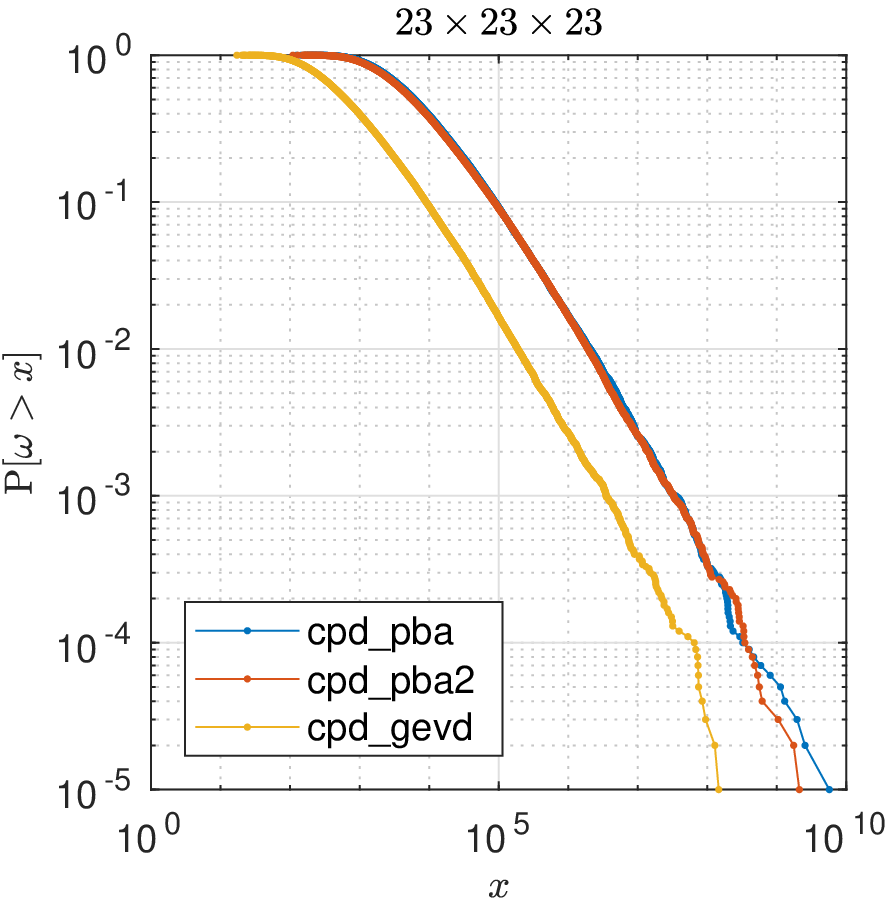}}}
\centerline{\subfloat{\includegraphics[height=1.9in]{./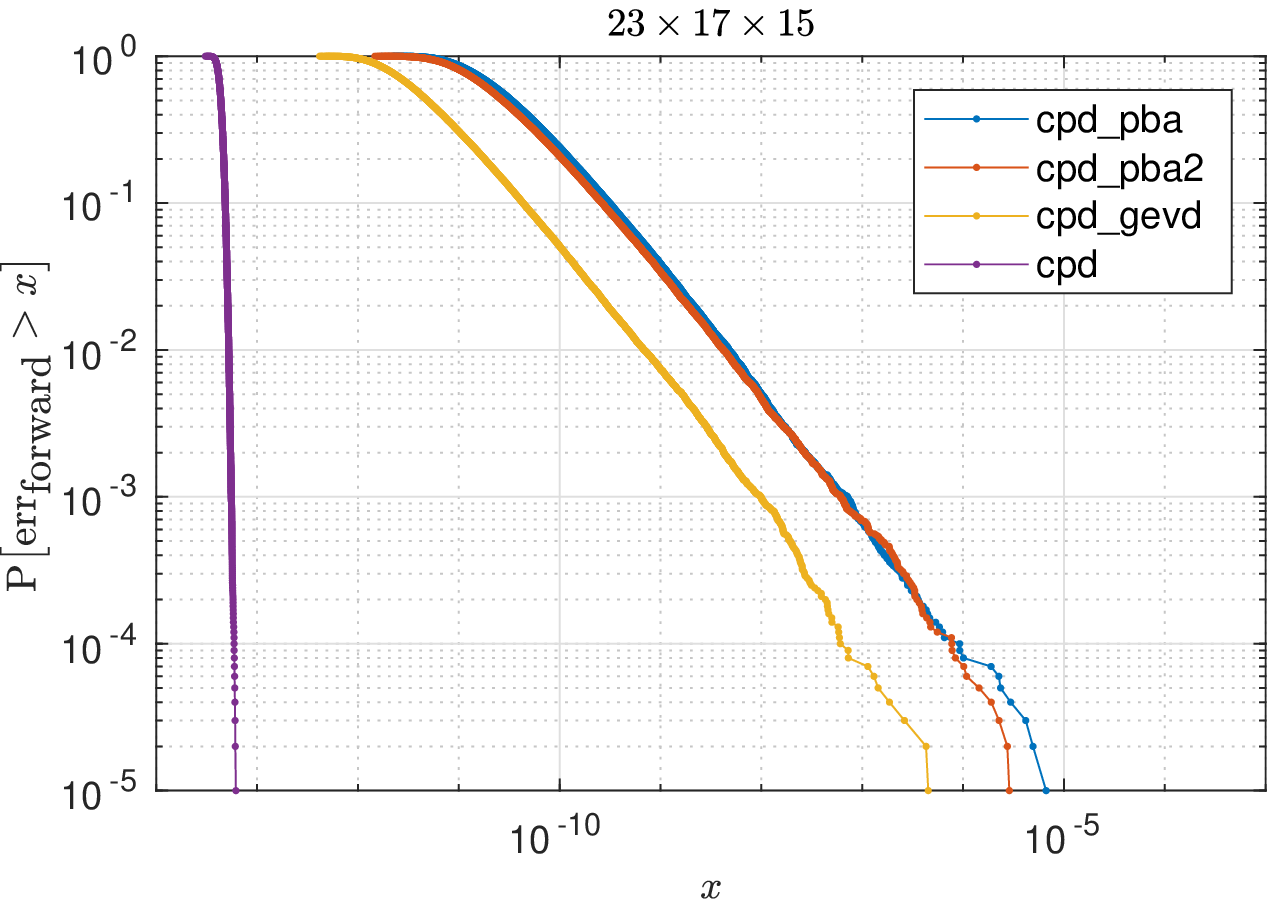}} \hspace{1em} \subfloat{\includegraphics[height=1.9in]{./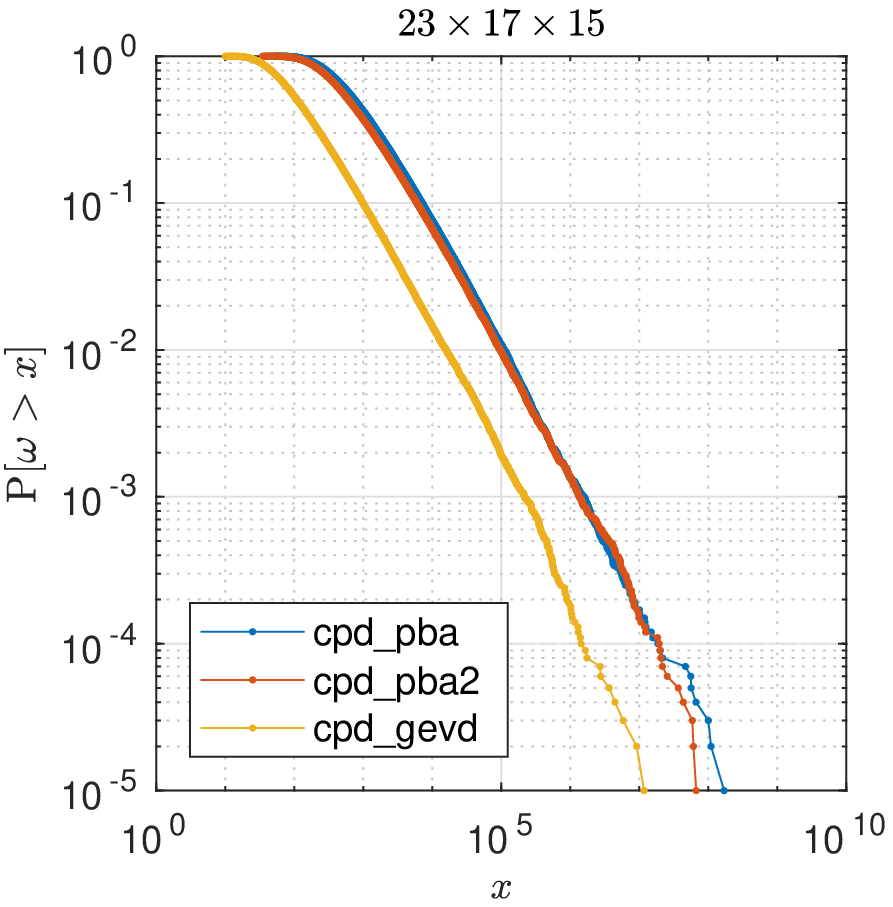}}}
\centerline{ \subfloat{\includegraphics[height=1.9in]{./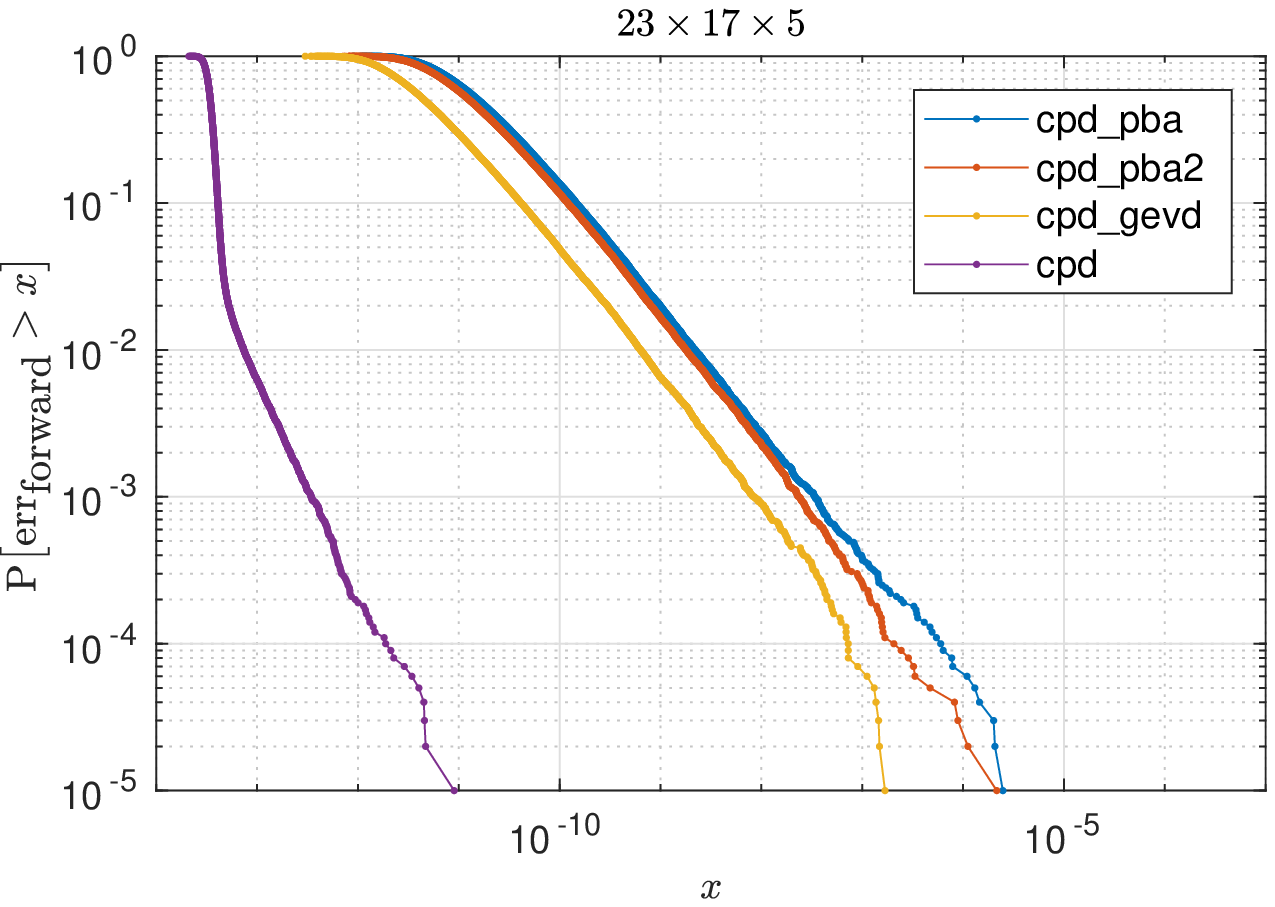}} \hspace{1em} \subfloat{\includegraphics[height=1.9in]{./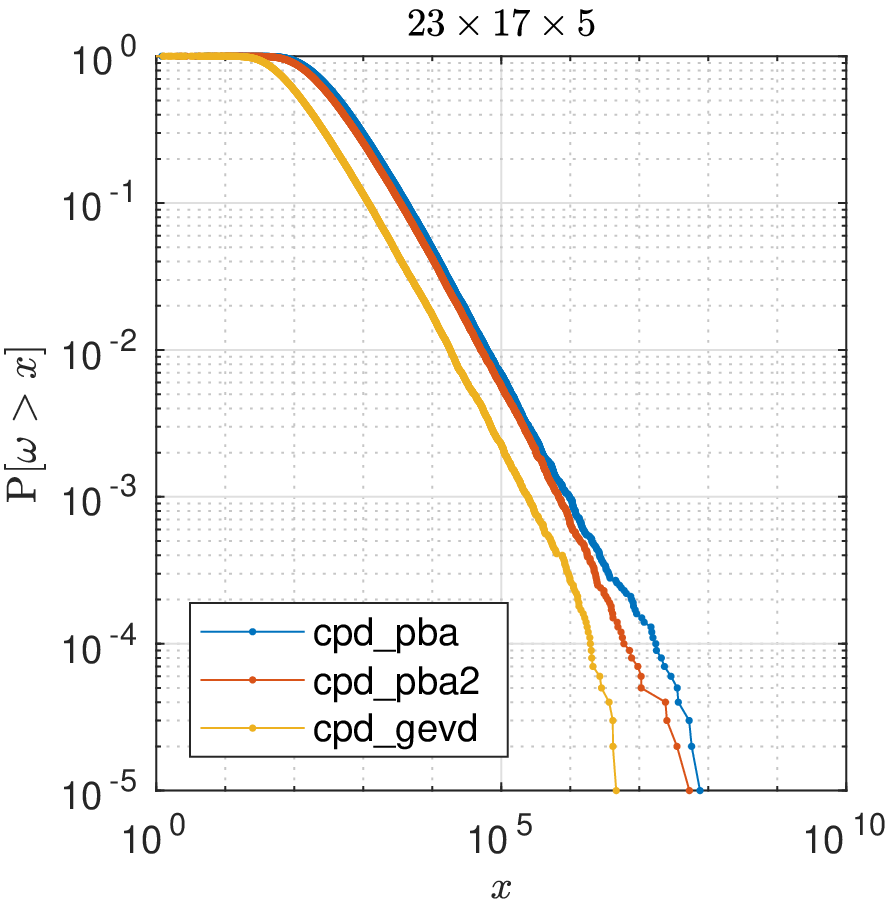}} }
\caption{The empirical cumulative distribution function of the forward error $\text{err}_\text{forward}$ and the multiplication factor $\mu$ for the standard PBA from \cref{sec_introduction}, the \texttt{cpd\_gevd} and \texttt{cpd} functions from Tensorlab, and the \texttt{cpd} function from Tensorlab initialized with the factor matrices obtained with the PBA applied to rank-$n_2$ tensors of size $n_1 \times n_2 \times n_3$.}
\label{fig_algorithm_errors}
\end{figure}

The setup is as follows. For each tested tensor shape $n_1 \times n_2 \times n_3$, we generated $10^5$ random rank-$n_2$ CPDs $\{ \vect{a}_1\otimes\vect{b}_1\otimes\vect{c}_1, \ldots, \vect{a}_r\otimes\vect{b}_r\otimes\vect{c}_r \}$
by sampling the entries of the vectors $\vect{a}_i \in \R^{n_1}$, $\vect{b}_i \in \R^{n_2}$ and $\vect{c}_i \in \R^{n_3}$ i.i.d.\ from a standard normal distribution. The corresponding tensor $\tensor{A} = \sum_{i=1}^r \vect{a}_i \otimes \vect{b}_i \otimes \vect{c}_i$ was then constructed. We used the three PBAs as well as Tensorlab's \texttt{cpd}
function to compute the CPD from $\tensor{A}$, recording the forward error. The results are displayed in \cref{fig_algorithm_errors}. The plots on the left show the empirical ccdfs of the forward errors of the four algorithms. The plots on the right show the excess factors of the PBAs.

Recall that \texttt{cpd} by default will use the PBA \texttt{cpd\_gevd} as initialization and will then refine its output by running a quasi-Newton method; see \cite{Sorber2013,Tensorlab}.
The stopping criterion for \texttt{cpd} was set to $\| \tensor{A} - \sum_{i=1}^r \tensor{A}_i' \|_F \le 2 \sqrt{10} \epsilon_u$, where $\epsilon_u \approx 1.1 \cdot 10^{-16}$ is the unit roundoff of standard double precision floating point arithmetic, and $\tensor{A}_i'$ are the rank-$1$ tensors. The forward error of \texttt{cpd} will thus be bounded approximately by $2 \sqrt{10} \kappa(\tensor{A}_1,\ldots,\tensor{A}_r) \cdot \epsilon_u$.
Recalling the shape of the ccdfs of the condition number from \cref{fig_cond_distribution}, we again note that as $n_3$ increases, the likelihood of large condition numbers diminishes. In fact, most of the generated TDPs were well-conditioned, as {can be inferred from the figure by noting that} the forward error of \texttt{cpd} is always less than $10^{-11}$.

The loss of precision of the two PBAs is very pronounced in \cref{fig_algorithm_errors}. Although \texttt{cpd\_gevd} is not strictly a PBA, because its projection operator depends on the tensor, its loss of precision in \cref{fig_algorithm_errors} asymptotically matches that of the PBAs.
Note the seemingly asymptotic log-linear relationship between the probability $\mathrm{P}[\omega > x]$ and $x$ in the right plots in \cref{fig_algorithm_errors}; that is, it seems plausible that asymptotically $\mathrm{P}[\omega > x] = a x^{-1}$ for some $a>0$.
A possible explanation of this behavior follows from our geometrical interpretation of the causes of instability. The inputs $\tensor{A}$ for which we expect $\omega(\tensor{A}) > x$ with large $x$ are those such that $\vect{c}_i\not\approx \vect{c}_j$ and yet $Q^T \vect{c}_i\approx Q^T \vect{c}_j$ for some $i\neq j$. This is more likely to happen if $n_3$ is large, since $\vect{c}_i\in\mathbb{S}(\R^{n_3})$ and $Q^T \vect{c}_i\in\mathbb{S}(\R^{2})$.
Indeed, the extreme case $Q^T \vect{c}_i = Q^T \vect{c}_j$, for some $i\neq j$, corresponds to a hypersurface $\mathcal{L}$ of $\mathbb{S}(\R^{n_3})^{\times r}$. If we {realize} that $Q^T \vect{c}_i\approx Q^T \vect{c}_j$ is similar to the property of being close to $\mathcal{L}$, then we expect $\omega >x $ to happen in some neighborhood of radius comparable to $1/x$ around $\mathcal{L}$. This neighborhood will have a volume of the order of $x^{-1}$, qualitatively explaining the observed behavior.

\section{Conclusions} \label{sec_conclusions}
We proved in \cref{thm_unstable} that popular pencil-based algorithms for computing the CPD of low-rank third-order tensors are numerically unstable. Moreover, not only do there exist inputs for which such algorithms are unstable, the numerical experiments suggest that for certain random CPDs the loss of precision is roughly $\mathcal{O}( - \log_{10}( \epsilon ) )$ with probability $\epsilon$. In addition to these results, we bounded the distribution of condition numbers of random CPDs, in \cref{thm_probability_dist_est}.

The main conclusion of our work is this: \textit{PBAs should be handled with care}, as the numerical experiments in \cref{sec_numex} demonstrated that an excess loss of precision is probable. When the most accurate result is sought, we advise to apply a Newton-based refinement procedure to the output of a PBA. This is in fact the default strategy pursued by the \texttt{cpd} function from Tensorlab v3.0.
While this strategy is certainly advisable when the input is perturbed only by roundoff errors, it is not clear to us whether employing a PBA for generating a starting point for an iterative method is more effective than a random initialization in the presence of significant (measurement) errors in the input data, both for reasons of conditioning (\cref{thm_probability_dist_est}) and stability (\cref{thm_unstable}). We believe that a further study on this point is required.

We hope that the construction of inputs for which PBAs are unstable, in \cref{sec_3o_unstable}, offers insights that can help in the design of numerically stable algorithms for computing CPDs.
Our analysis suggests that methods partly recovering the rank-$1$ tensors from a matrix pencil are numerically unstable in the neighborhood of some adversarially chosen inputs.

Finally, we emphasize that the reason why PBAs are numerically unstable is caused by transforming the tensor decomposition problem into a \textit{more difficult computational problem} that is nevertheless \textit{perceived to be easier to solve}, probably because there are direct algorithms for solving them. Here is thus a decidedly positive message that we wish to stress: computing a CPD can be easier, from a numerical point of view, than solving the generalized eigendecomposition problem for a projected tensor. We hope that these observations may (re)invigorate the search for numerically stable algorithms for computing CPDs.

\appendix
\section{Proof of the lemmata}

The proofs of the technical \cref{lem_kappa_identity,lem_opendense,lem_localisometry,lem_samecond,lem_nodecrease} are presented.

\subsection{Proof of \cref{lem_kappa_identity}}
For brevity, let
$$\Var{M} = \Var{M}_{r;n_1,\ldots,n_d},\; \widehat{\Var{M}} = \widehat{\Var{M}}_{r;n_1,\ldots,n_d},\;\Var{N} = \Var{N}_{r;n_1,\ldots,n_d}, \; \text{ and }\;\tau = \tau_{r;n_1,\ldots,n_d}.$$
For (1) we just refer to \cite[Section 2.3]{Petersen} which covers our case since the group $\mathfrak{S}_r$ acts by isometries on $\Var{M}$. Therefore, the induced metric $\widehat{g}$ on $\widehat{\Var{M}}$ is the pushforward $\widehat{g} := \widehat{\pi}_* g$  of the Riemannian metric $g$ on $\Var{M}$ that is inherited from the standard product of inner products on the ambient Euclidean space (namely $(\R^{n_1 \cdots n_d})^{\times r}$) of $\Var{M} \subset \Var{S}^{\times r} \subset (\R^{n_1 \cdots n_d})^{\times r}$.
We denote by $h$ the metric on $\Var{N}$ which is given by the standard Euclidean inner product $\langle \cdot, \cdot \rangle$ that $\Var{N}$ inherits from the ambient space $\R^{n_1 \times \cdots \times n_d} \simeq \R^{n_1 \cdots n_d}$.

It will be insightful to describe the metric $\widehat{g}$ on $\widehat{\Var{M}}$ more concretely. Let $\tuple{a} = (\tensor{A}_1,\ldots,\tensor{A}_r) \in \Var{M}$ be an arbitrary ordered $r$-nice decomposition, and let $\widehat{\tuple{a}} := \widehat{\pi}(\tuple{a})$ denote the corresponding CPD.
Let $\widehat{\pi}^{-1}_{\tuple{a}}$ be the smooth local section with $(\widehat{\pi}^{-1}_{\tuple{a}} \circ \widehat{\pi})(\tuple{a}) = \tuple{a}$. The pushforward $\widehat{g} = \widehat{\pi}_* g$ is defined (see \cite[p.~183]{Lee2013}) as the map satisfying
\(
\widehat{g}_{\,\widehat{\tuple{a}}} ( \widehat{\tuple{s}}, \widehat{\tuple{t}} ) := g_{\tuple{a}} ( \deriv{\widehat{\pi}_{\tuple{a}}^{-1}}{\widehat{\pi}(\tuple{a})} (\widehat{\tuple{s}}), \deriv{\widehat{\pi}_{\tuple{a}}^{-1}}{\widehat{\pi}(\tuple{a})} ( \,\widehat{\tuple{t}}\, ) )
\)
for all
\(\widehat{\tuple{s}}, \widehat{\tuple{t}} \in \Tang{\widehat{\tuple{a}}}{\widehat{\Var{M}}} \simeq \Tang{\tuple{a}}{\Var{M}} 
\)
where
$g_{\tuple{a}}( \tuple{b}, \tuple{c} ) := \sum_{i=1}^r \langle \tuple{b}_i, \tuple{c}_i \rangle$ with $\tuple{b}_i, \tuple{c}_i \in \Tang{\tensor{A}_i}{\Var{S}}$.
Using the identification $\Tang{\widehat{\tuple{a}}}{\widehat{\Var{M}}} \simeq \Tang{\tuple{a}}{\Var{M}}$ which is given by the isometry $\deriv{\widehat{\pi}}{\tuple{a}}$ we can denote $\widehat{\tuple{t}} = \{ \vect{t}_1, \ldots, \vect{t}_r \}$ with $\vect{t}_i \in \Tang{\tensor{A}_i}{\Var{S}}$. Similarly, we can write $\widehat{\tuple{s}} = \{ \vect{s}_1, \ldots, \vect{s}_r \}$ with $\vect{s}_i \in \Tang{\tensor{A}_i}{\Var{S}}$. Then, it follows that
\[
g_{\,\widehat{\tuple{a}}} ( \widehat{\tuple{s}}, \widehat{\tuple{t}} ) = \sum_{i=1}^r \langle \vect{t}_i, \vect{s}_i \rangle \text{ and }
\| \tuple{t} \|_{\widehat{\Var{M}},\widehat{\tuple{a}}}^2 = \sum_{i=1}^r \|\vect{t}_i\|^2 = \bigl\| \begin{bmatrix} \vect{t}_1 & \cdots & \vect{t}_r \end{bmatrix} \bigr\|_F^2,
\]
where $\| \tuple{t} \|_{\widehat{\Var{M}},\widehat{\tuple{a}}}$ is the induced norm on $\Tang{\widehat{\tuple{a}}}{\widehat{\Var{M}}}$.

From the foregoing discussion it indeed follows for every choice of $\tuple{a}\in\widehat{\pi}^{-1}(\tau(\tensor{A}))$ that
\begin{align*}
\kappa[\tau](\tensor{A})
&= \max_{\vect{t} \in \Tang{\tensor{A}}{\Var{N}}} \frac{\| (\deriv{\tau}{\tensor{A}})(\vect{t}) \|_{\widehat{\Var{M}},\tau(\tensor{A})}}{\| \vect{t} \|_F}
= \max_{\vect{t} \in \Tang{\tensor{A}}{\Var{N}}} \frac{\| \deriv{\widehat{\pi}_{\tuple{a}}^{-1}}{\widehat{\pi}(\tuple{a})} \bigl( (\deriv{\tau}{\tensor{A}})(\vect{t}) \bigr) \|_F }{\| \vect{t} \|_F} \\
&= \max_{\vect{t} \in \Tang{\tensor{A}}{\Var{N}}} \frac{\| (\deriv{(\widehat{\pi}_{\tuple{a}}^{-1} \circ \tau)}{\tensor{A}})(\vect{t}) \bigr) \|_F}{\| \vect{t} \|_F}
= \max_{\vect{t} \in \Tang{\tensor{A}}{\Var{N}}} \frac{\| (\deriv{\Phi_{\tuple{a}}^{-1}}{\tensor{A}})(\vect{t}) \|_F}{\| \vect{t} \|_F}
= \kappa( \tensor{A}_1, \ldots, \tensor{A}_r ),
\end{align*}
where the second equality is by the definition of the metric, the third by the linearity of derivatives,
and the final equality is precisely Theorem 1.1 of \cite{BV2017}. This finishes the proof of (2).

Finally, (3) follows from the fact that $\widehat{\pi}$ is a local isometry and thus preserves the lengths of curves. Given any curve joining two elements in $\widehat{\Var{M}}$, its lift through the covering $\widehat{\pi}$ thus has the same length. Since we are free to choose the representative, we thus choose one that minimizes the length of the lifted curve. \qed

\subsection{Proof of Lemma \ref{lem_samecond}}
For brevity, we drop all subscripts:
$$\Var{N} = \Var{N}_{r;n_1,n_2,2},\; \widehat{\Var{M}} = \widehat{\Var{M}}_{r;n_1,n_2,2},\;\Var{M} = \Var{M}_{r;n_1,n_2,2},\;\widehat{S} = \widehat{S}_{r;n_1},\;S = S_{r;n_1} \text{ and } \tau = \tau_{r;n_1,n_2,2}.$$ 
Consider again the diagram from \cref{eqn_diagram}.
Note that $\Var{N}$, $\widehat{\Var{M}}$, and $\Var{N}\times\widehat{S}$ are manifolds. We claim that $\Theta = \operatorname{Id}_{\Var{N}} \times \theta|_{\Var{N}}$ and $\widehat{\eta}$ are smooth maps between manifolds. We can explicitly write $\widehat{\eta}$ as
$$
\widehat{\eta} : \Var{N} \times \widehat{S} \to \widehat{\Var{M}},\;
(\tensor{B}, \{\vect{a}_1,\ldots,\vect{a}_r\}) \mapsto \widehat{\pi}( A\odot (A^\dagger \tensor{B}_{(1)})^T ),
$$
where $A = [\vect{a}_i]_i \in S$ is a $n_1 \times r$ matrix with the $\vect{a}_i$'s as columns in any order; $\tensor{B}_{(1)} = A (B \odot Z)^T$ is the $1$-flattening \cite{Landsberg2012} of $\tensor{B} = \sum_{i=1}^r \sten{a}{i}{} \otimes \sten{b}{i}{} \otimes \vect{z}_i$;
and with a minor abuse of notation $\widehat{\pi}$ is the smooth map that takes a matrix and sends it to the set of its columns. By assumption $r \le n_1$ so that ${S}$ is the manifold of matrices with linearly independent unit-norm columns. Therefore, $A^\dagger = (A^T A)^{-1} A^T$ for all $A \in S$, which is a smooth map. Consequently, $\widehat{\eta}$ is a smooth map, by \cite[Proposition 2.10 (d)]{Lee2013}.
Let $\Psi_{n_1,\ldots,n_d}^*$ be the map from \cref{eqn_psi_star}. Then, we have
\[
 \widehat{\theta}|_{\Var{N}} =  \pi \circ \bigl( \pi_2 \circ (\Psi_{n_1,n_2,2}^*)^{-1} \bigr)^{\times r} \circ \tau,
\]
where $\pi_2 : \R\setminus\{0\} \times \mathbb{S}^+(\R^{n_1}) \times \mathbb{S}^+(\R^{n_2}) \times \mathbb{S}^+(\R^{n_3}) \to \mathbb{S}^+(\R^{n_1})$ projects onto the second factor. The projection $\pi$ is a local diffeomorphism by \cref{lemma_quotient1}, the coordinate projection $\pi_2$ is smooth, $\Psi_{n_1,n_2,2}^*$ is a diffeomorphism, and $\tau$ is a diffeomorphism by \cref{prop_image}.
Therefore, $\theta|_{\Var{N}}$ is smooth, by \cite[Proposition 2.10(d)]{Lee2013}, and so $\Theta$ is smooth.

Recall that the spectral norm of a linear operator $F : V \to W$, where $V$ and $W$ are normed vector spaces with respective norms $\|\cdot\|_V$ and $\|\cdot\|_W$, is
\(
 \| F \|_{V,W} := \max_{\vect{t} \in V} \tfrac{\|F(\vect{t})\|_W}{\|\vect{t}\|_V}.
\)
For composable maps, the foregoing spectral norms are submultiplicative.
Since $\tau = \Theta \circ \widehat{\eta}$ is a composition of smooth maps between manifolds, we have that $\deriv{\tau}{\tensor{A}} = \deriv{\widehat{\eta}}{\Theta(\tensor{A})} \circ \deriv{\Theta}{\tensor{A}}$.
Therefore,
\begin{align*}
 \kappa[\tau](\tensor{A})
:= \| \deriv{\tau}{\tensor{A}} \|_{\Tang{\tensor{A}}{\Var{N}}, \Tang{\tau(\tensor{A})}{\widehat{\Var{M}}} }
&\le \| \deriv{\Theta}{\tensor{A}} \|_{ \Tang{\tensor{A}}{\Var{N}}, \Tang{\Theta(\tensor{A})}{(\Var{N}\times\widehat{S}})} \; \| \deriv{\widehat{\eta}}{\Theta(\tensor{A})} \|_{\Tang{\Theta(\tensor{A})}{(\Var{N}\times\widehat{S}}), \Tang{\tau(\tensor{A})}{\widehat{\Var{M}}} } \\
&= \kappa[\Theta](\tensor{A}) \cdot \kappa[\widehat{\eta}](\Theta(\tensor{A})),
\end{align*}
where the last step follows from the definition in \cref{def_kappa_general}. Note that this generalizes \cref{eqn_condition_inequality}.

We can write the condition number of $\Theta$ as a function of the condition number of $\widehat{\theta}|_{\Var{N}}$. Indeed, let $\vect{t} \in \Tang{\tensor{A}}{\Var{N}}$ be arbitrary, and observe that
\[
\| \deriv{ \Theta }{\tensor{A}}(\vect{t}) \|_{\Var{N} \times \widehat{S},\Theta(\tensor{A})}^2
 = \| \bigl(\vect{t}, \deriv{\widehat{\theta}|_{\Var{N}}}{\tensor{A}}(\vect{t}) \bigr) \|_{\Var{N} \times \widehat{S},\Theta(\tensor{A})}^2
 = \|\vect{t}\|_2^2 +  \| \deriv{\widehat{\theta}|_{\Var{N}}}{\tensor{A}}(\vect{t}) \bigr) \|_{\widehat{S},\widehat{\theta}|_{\Var{N}}(\tensor{A})}^2.
\]
As a result, we find
\[
 \bigl( \kappa[\Theta](\tensor{A}) \bigr)^2
 = \max_{\vect{t} \in \mathbb{S}(\Tang{\tensor{A}}{\Var{N}})} \| \deriv{\Theta}{\tensor{A}}(\vect{t}) \|_{\Var{N} \times \widehat{S},\Theta(\tensor{A})}^2
 =  1 + \max_{\vect{t} \in \mathbb{S}(\Tang{\tensor{A}}{\Var{N}})} \| \deriv{\widehat{\theta}|_{\Var{N}}}{\tensor{A}}(\vect{t}) \bigr) \|_{\widehat{S},\widehat{\theta}|_{\Var{N}}(\tensor{A})}^2
 = 1 + \bigl( \kappa[\widehat{\theta}|_{\Var{N}}](\tensor{A}) \bigr)^2.
\]
Exploiting that $\sqrt{1+x^2} \le 1+|x|$ for all $x\in\R$, we thus find
\begin{align}\label{eqn_proof_end}
 \frac{\kappa[\tau](\tensor{A})}{ \kappa[\widehat{\eta}](\Theta(\tensor{A})) } - 1 \le \kappa[\widehat{\theta}|_{\Var{N}}](\tensor{A}).
\end{align}

The proof will be completed by bounding $\kappa[\widehat{\eta}](\Theta(\tensor{A}))$ from above. As Riemannian metric on $\Var{N} \times \widehat{S}$ we choose the product metric of the natural Riemannian metric on $\Var{N}$, which is inherited from the ambient $\R^{n_1\times n_2\times2}\simeq\R^{n_1 n_2 2}$, and the Riemannian metric that is the pushforward of the standard Euclidean inner product that $S$ inherits from $\R^{n_1 \times r}$ via the map $\pi : S \to \widehat{S}$, which is also a local isometry by the same arguments as in the proof of \cref{lem_kappa_identity}.
Let $A = [\vect{a}_i]_i \in S$ be a factor matrix of $\tensor{B}=\sum_{i=1}^r \vect{a}_i \otimes \vect{b}_i \otimes \vect{z}_i$, which thus imposes an order on the $\vect{a}_i$'s. Let us denote the other two factor matrices by $B=[\vect{b}_i]_i \in S_{r;n_2}$ (the $\vect{b}_i$'s are in GLP) and $Z = [\vect{z}_i]_i\in \R^{2 \times r}$.
Since $\Var{N}\times\widehat{S}$ is locally isometric to $\Var{N}\times S$, there is a local section $\pi_{A}^{-1}$ of $\pi$. As $\widehat{\Var{M}}$ is locally isometric to $\Var{M}$ via $\widehat{\pi}$, there is also a local section $\widehat{\pi}_\star^{-1}$ that is consistent with $A$ in the sense that
\[
 (\widehat{\pi}_{\star}^{-1} \circ \widehat{\eta})(\tensor{B},\{\vect{a}_1,\ldots,\vect{a}_{r}\}) = \eta\bigl(\tensor{B}, \pi_A^{-1}(\{\vect{a}_1,\ldots,\vect{a}_{r}\}) \bigr),
\]
where $\eta(\tensor{B},A) := A \odot (A^\dagger \tensor{B}_{(1)})^T$. We have that $\kappa[\eta](\tensor{B}, A) = \kappa[\widehat{\eta}](\tensor{B},\{\vect{a}_1,\ldots,\vect{a}_r\})$ because of the local isometries. Hence, we can study $\kappa[\eta](\tensor{B}, A)$ instead.

The derivative of $\eta$ is computed as follows. We note that
\begin{align*}
 (\deriv{A^\dagger}{})(\dot{A})
 = (\deriv{(A^T A)^{-1} A^T}{}) (\dot{A})
 &= (A^T A)^{-1} \dot{A}^T + (A^T A)^{-1} ( \dot{A}^T A + A^T \dot{A} ) (A^T A)^{-1} A^T \\
 &= (A^T A)^{-1} \bigl( \dot{A}^T + ( \dot{A}^T A + A^T \dot{A} ) A^\dagger \bigr),
\end{align*}
where $\dot{A}$ is a tangent vector in $ \Tang{A}{S_{r;n_1}}$. We find that
	\begin{multline*}
	(\deriv{\eta}{(\tensor{B}, A)})(\dot{\tensor{B}},\dot{A})
	= A\odot ( A^\dagger \dot{\tensor{B}}_{(1)})^T + \dot{A} \odot ( A^\dagger \tensor{B}_{(1)} )^T \\ + A \odot \bigl( (A^T A)^{-1} \bigl( \dot{A}^T + ( \dot{A}^T A + A^T \dot{A} ) A^\dagger \bigr) \tensor{B}_{(1)} \bigr)^T.
	\end{multline*}
Now, by definition of the Riemannian metrics
\begin{align} \label{eqn_kappa_gamma2}
\kappa[\eta](\tensor{B},A)
 = \max_{ \Vert \dot{\tensor{B}} \Vert_F^2 + \Vert \dot{A}\Vert_F^2 =1} \Vert (\deriv{\eta}{(\tensor{B}, A)})(\dot{\tensor{B}},\dot{A})\Vert_F.
\end{align}
Let $(\dot{\tensor{B}}, \dot{A})$ be a maximizer of \cref{eqn_kappa_gamma2}. Note that $\| \dot{\tensor{B}} \|_F \le 1$ and $\| \dot{A} \|_F \le 1$. Since $A \odot (A^\dagger \tensor{B}_{(1)})^T$ is a submatrix of $A \otimes (A^\dagger \tensor{B}_{(1)})^T$, it follows that $\| A \odot (A^\dagger \tensor{B}_{(1)})^T \|_F \le \| A \|_F \|A^\dagger \tensor{B}_{(1)}\|_F$.
Exploiting this inequality and the triangle inequality a few times, we obtain
\begin{equation*}
\kappa[\eta](\tensor{B},A) \le
 \Vert A\Vert_F \Vert A^\dagger \dot{\tensor{B}}_{(1)}\Vert_F +  \Vert A^\dagger \tensor{B}_{(1)}\Vert_F + \Vert A\Vert_F \Vert (A^T A)^{-1} \bigl( \dot{A}^T + ( \dot{A}^T A + A^T \dot{A} ) A^\dagger \bigr) \tensor{B}_{(1)}\Vert_F.
\end{equation*}
The right-hand side is a Lipschitz continuous function in $(\tensor{B},A) \in \R^{n_1 \times n_2 \times 2} \times \R^{n_1\times r}$, say with Lipschitz constant $\ell > 0$.

By assumption there is a matrix $A'= [\vect{a}_i']_i$ {with orthonormal columns} with $\Vert A-A'\Vert_F <\nu$. Let $\tensor{B}'$ be the tensor with factor matrices $A'$,$B$, $Z$; that is, $\tensor{B}' := \sum_{i=1}^r \vect{a}_i' \otimes \vect{b}_i \otimes \vect{z}_i$. Then, by the triangle inequality and the computation rules for inner products of rank-1 tensors from \cref{eq:innerprod},
\begin{align*}
 \| \tensor{B}' - \tensor{B} \|_F
\le \sum_{i=1}^r \| (\vect{a}_i -\vect{a}_i' ) \otimes \vect{b}_i \otimes \vect{z}_i \|_F
= \sum_{i=1}^r \| \vect{a}_i -\vect{a}_i' \Vert_F \Vert \vect{b}_i \otimes \vect{z}_i \|_F
\leq  r\nu(1+\nu),
\end{align*}
where the last step is because $\|\vect{b}_i \otimes \vect{z}_i\|_F < 1+\nu$ for each $i$. This shows that
$$
\Vert (\tensor{B}, A)-(\tensor{B}', A')\Vert_F
\leq \sqrt{r^2\nu^2 (1+\nu)^2+\nu^2}
= \nu \sqrt{r^2 (1+\nu)^2+1}.
$$
Assume that $\nu \le 1$ and let us write $L := \ell \sqrt{4 r^2 + 1}$.
Then, using the Lipschitz continuity from above, $\Vert A'\Vert_F = \sqrt{r}$ and $(A')^\dagger = (A')^T$ we find
\begin{equation*}
 \kappa[\eta](\tensor{B},A)
 \leq \sqrt{r} \Vert (A')^T \dot{\tensor{B}}_{(1)}\Vert_F  +  \Vert (A')^T \tensor{B}_{(1)}'\Vert_F + \sqrt{r} \Vert \bigl( \dot{A}^T + ( \dot{A}^T A' + (A')^T \dot{A} ) (A')^T \bigr) \tensor{B}_{(1)}'\Vert_F + \nu L.
\end{equation*}
Recall that for matrices $X,Y$ we have the inequality $\|X Y \|_F \le \min\{ \|X\|_2 \|Y\|_F, \|X\|_F \|Y\|_2 \}$. Observe that $(A')^T \tensor{B}_{(1)}' = B\odot Z$ and $\Vert (A')^T\Vert_2 = 1$. Exploiting these we obtain
\begin{align*}
 \kappa[\eta](\tensor{B},A)
&\le \sqrt{r} + \| B\odot Z \|_F + \sqrt{r} \bigl( \|  \dot{A}^T \tensor{B}_{(1)}' \|_F  + \| ( \dot{A}^TA' + (A')^T \dot{A} ) (B \odot Z)^T \|_F \bigr) + \nu L \\
&\le \sqrt{r} + \| B\odot Z \|_F + \sqrt{r} \| \tensor{B}_{(1)}' \|_2  + 2 \sqrt{r} \| \dot{A}^T A' \|_F \| B \odot Z \|_F  + \nu L.
\end{align*}
{Finally, we have $\| \dot{A}^T A' \|_F \leq \| \dot{A}^T\|_F \| A' \|_2 = 1$.} Then, since $\tensor{B}_{(1)}' = A' (B\odot Z)^T$, we also have $\Vert \tensor{B}_{(1)}' \Vert_2 \leq \Vert B\odot Z\Vert_2 \le \Vert B\odot Z\Vert_F \le \sqrt{r}(1 + \nu)$. This shows
\begin{align*}
 \kappa[\eta](\tensor{B},A)
&\le \sqrt{r} + (1 + 3\sqrt{r})\|B\odot Z\|_F + \nu L \le 10 r.
\end{align*}
where in the last step we assumed that $\nu L \le r$.
Plugging this into \cref{eqn_proof_end} finishes the proof. \qed

\subsection{Proof of Lemma \ref{lem_nodecrease}}
Observe that $\widetilde{B} \odot \widetilde{C}$ can naturally be regarded as a matrix in the space $\R^{n_2n_3 \times r}$. Therefore,
\begin{equation*}
\varepsilon := \min_{\pi \in \mathfrak{S}_r} \| A \odot B \odot C  - (\widetilde{A} \odot \widetilde{B} \odot \widetilde{C})P_\pi \|_F \geq
\min_{\pi \in \mathfrak{S}_r} \min_{M\in \R^{n_2 n_3 \times r}} \| A \odot B \odot C  - (\widetilde{A} \odot M) P_\pi \|_F,
\end{equation*}
where $P_\pi$ is the permutation matrix corresponding to $\pi$. Let $\pi \in \mathfrak{S}_r$ be any permutation. Then,
\[
\min_{M\in \R^{n_2 n_3 \times r}} \| A \odot B \odot C  - (\widetilde{A} \odot M) P_{\pi} \|_F
= \min_{M\in \R^{n_2 n_3 \times r}} \| A \odot B \odot C  - (\widetilde{A} P_{\pi}) \odot M \|_F,
\]
where the last step is because of the definition of the Khatri--Rao product, and because every $M \in \R^{n_2 n_3 \times r}$ can be factored as $(M P_{\pi}^{-1}) P_{\pi}$ since $P_{\pi}$ is invertible.
Let $\vect{m}_1,\ldots,\vect{m}_r$ be the columns of $M$. Then, we have that
\begin{equation}\label{SOS_lem4.4}
\| A \odot B \odot C  - (\widetilde{A} P_{\pi} ) \odot M \|_F^2
= \sum_{i=1}^r \Vert \vect{a}_i\otimes (\vect{b}_i\otimes \vect{c}_i) - \widetilde{\vect{a}}_{\pi_i} \otimes \vect{m}_{i} \Vert^2_F
\end{equation}
is a sum of squares, so that we can minimize each ${\vect{m}}_i$ separately.
The first-order necessary optimality conditions are
\[
  (\widetilde{\vect{a}}_{\pi_i} \otimes I_{n_2 n_3})^T ( \vect{a}_i \otimes (\vect{b}_i \otimes \vect{c}_i) - \widetilde{\vect{a}}_{\pi_i} \otimes \vect{m}_{i}) = 0, \quad i=1,\ldots,r.
\]
Solving for $\vect{m}_{i}$ yields the unique solution ${\vect{m}}_{i} = \langle \widetilde{\vect{a}}_{\pi_i},\vect{a}_i\rangle\, \vect{b}_i\otimes \vect{c}_i.$ Plugging this minimizer into the $i$th term in the right-hand side of \cref{SOS_lem4.4}, we find
\begin{align*}
\Vert (\vect{a}_i-\langle \widetilde{\vect{a}}_{\pi_i}, \vect{a}_i\rangle \widetilde{\vect{a}}_{\pi_i})\otimes\vect{b}_i\otimes \vect{c}_i\Vert_F^2
= \Vert \vect{a}_i-\langle\widetilde{\vect{a}}_{\pi_i},\vect{a}_i\rangle \widetilde{\vect{a}}_{\pi_i}\Vert^2 \Vert\vect{b}_i\otimes \vect{c}_i\Vert_F^2
\ge (1-\nu)^2 \Vert \vect{a}_i-\langle\widetilde{\vect{a}}_{\pi_i},\vect{a}_i\rangle \widetilde{\vect{a}}_{\pi_i} \Vert^2,
\end{align*}
where we used the computation rules for inner products from \cref{eq:innerprod} in the first step, and the assumption that $\| \vect{b}_i \otimes \vect{c}_i \|_F \ge 1-\nu$ in the last step.
From this it follows that
\[
\min_{M \in \R^{n_2 n_3 \times r}} \| A \odot B \odot C  - (\widetilde{A} P_{\pi}) \odot M \|_F^2 \ge (1-\nu)^2 \| A - \widetilde{A} P_{\pi} \operatorname{diag}(\langle \widetilde{\vect{a}}_{\pi_1}, \vect{a}_1 \rangle, \ldots, \langle \widetilde{\vect{a}}_{\pi_r}, \vect{a}_r \rangle) \|_F^2.
\]

Let us define
\(
\zeta_{\pi} := \| A - \widetilde{A} P_{\pi} \operatorname{diag}(\langle \widetilde{\vect{a}}_{\pi_1}, \vect{a}_1 \rangle, \ldots, \langle \widetilde{\vect{a}}_{\pi_r}, \vect{a}_r \rangle) \|_F.
\)
We claim that the minimizer of $\min_{\pi \in \mathfrak{S}_r} \zeta_{\pi}$ equals the minimizer $\pi^*$ of $\min_{\pi \in \mathfrak{S}_r} \| A - \widetilde{A} P_{\pi} \|_F$.
To prove this, we show that $\zeta_{\pi^*} = \min_{\pi \in\mathfrak{S}_r} \zeta_\pi$ by exhibiting an upper bound for $\zeta_{\pi^*}$ that is smaller than a lower bound for $\zeta_{\pi}$ with $\pi \ne \pi^*$.
Note that
\(
\langle \widetilde{\vect{a}}_{\pi_i^*}, \vect{a}_i \rangle = \langle \vect{a}_i - \vect{f}_i, \vect{a}_i \rangle = 1 - \langle \vect{f}_i, \vect{a}_i \rangle.
\)
Hence,
\begin{align*}
\zeta_{\pi^*}
&= \| A - \widetilde{A} P_{\pi^*} + \widetilde{A} P_{\pi^*} \operatorname{diag}(\langle \widetilde{\vect{f}}_{1}, \vect{a}_1 \rangle, \ldots, \langle \widetilde{\vect{f}}_i, \vect{a}_r \rangle) \|_F \\
&\le \| A - \widetilde{A} P_{\pi^*} \|_F + \| \widetilde{A} P_{\pi^*} \operatorname{diag}(\langle \widetilde{\vect{f}}_{1}, \vect{a}_1 \rangle, \ldots, \langle \widetilde{\vect{f}}_i, \vect{a}_r \rangle) \|_F \\
&\le \delta + \| \widetilde{A} \|_F \| P_{\pi^*} \|_2 \| \operatorname{diag}(\langle \widetilde{\vect{f}}_{1}, \vect{a}_1 \rangle, \ldots, \langle \widetilde{\vect{f}}_i, \vect{a}_r \rangle) \|_2 = \delta + \sqrt{r} \max_{1\le i\le r} | \langle \vect{f}_i, \vect{a}_i \rangle | \le \delta( 1 + \sqrt{r}),
\end{align*}
where the last step is due to the Cauchy--Schwartz inequality. Next, we lower bound $\zeta_{\pi'}$ with $\pi' \ne \pi^*$. In this case, there is always some $k$ such that $\pi_k' = \pi_j^*$ with $j \ne k$.
Then,
\[
 \| \vect{a}_k - \langle \widetilde{\vect{a}}_{\pi_k'}, \vect{a}_k \rangle \widetilde{\vect{a}}_{\pi_k'} \|^2
 = \| \vect{a}_k - \langle \widetilde{\vect{a}}_{\pi_j^*}, \vect{a}_k \rangle \widetilde{\vect{a}}_{\pi_j^*} \|^2
 = 1 - \langle \widetilde{\vect{a}}_{\pi_j^*}, \vect{a}_k \rangle^2.
\]
Note that for all $i=1,\ldots,r$ we have that
\[
0 \le \| \vect{a}_i' - \widetilde{\vect{a}}_{\pi_i^*} \|
 = \| \vect{a}_i' - (\vect{a}_i + \vect{f}_i) \|
 \le \| \vect{a}_i' - \vect{a}_i \| + \|\vect{f}_i\|
 \le \nu + \delta,
\]
where $\vect{f}_i := \vect{a}_i - \widetilde{\vect{a}}_{\pi_i^*}$ and where we used $\delta_i := \|\vect{f}_i\| = \|\vect{a}_i - \widetilde{\vect{a}}_{\pi_i^*}\|\le \|A - \widetilde{A}P_{\pi^*}\|_F = \delta$ in the last step. Therefore, we have
\begin{align*}
 |\langle \widetilde{\vect{a}}_{\pi_j^*}, \vect{a}_k \rangle|
 =  |\langle \vect{a}_j' + \vect{f}_j , \vect{a}_k' + (\vect{a}_k - \vect{a}_k') \rangle|
 &\le |\langle \vect{a}_j', \vect{a}_k' \rangle| + |\langle \vect{a}_j', \vect{a}_k - \vect{a}_k' \rangle| + |\langle \vect{f}_j , \vect{a}_k' \rangle| + |\langle \vect{f}_j , \vect{a}_k - \vect{a}_k' \rangle| \\
 &\le 0 + \| \vect{a}_k - \vect{a}_k' \| + \| \vect{f}_j \| + \|\vect{f}_j\| \|\vect{a}_k - \vect{a}_k'\|
 \le \nu + \delta + \nu \delta.
\end{align*}
It follows that we have the following lower bound
\[
 \zeta_{\pi'}^2
 = \sum_{i=1}^r \| \vect{a}_i - \langle \widetilde{\vect{a}}_{\pi_i'}, \vect{a}_i \rangle \widetilde{\vect{a}}_{\pi_i'} \|^2
 \ge \| \vect{a}_j - \langle \widetilde{\vect{a}}_{\pi_j'}, \vect{a}_j \rangle \widetilde{\vect{a}}_{\pi_j'} \|^2
 = 1 - \langle \widetilde{\vect{a}}_{\pi_k^*}, \vect{a}_j \rangle^2
 \ge 1 - (\nu + \delta + \nu\delta)^2.
\]
When both $\nu$ and $\delta$ are sufficiently small, we have
\[
\zeta_{\pi^*} \le (1+\sqrt{r})\delta < \sqrt{1 - (\nu + \delta + \nu\delta)^2} \le \zeta_{\pi'}
\]
for all $\pi' \ne \pi^*$. This indeed proves that $\pi^*$ is also the minimizer of $\min_{\pi\in\mathfrak{S}_r} \zeta_\pi$.

Combining the foregoing results, we find
\[
 \varepsilon^2
 \ge (1-\nu)^2 \min_{\pi \in \mathfrak{S}_r} \zeta_{\pi}^2
 = (1-\nu)^2 \zeta_{\pi^*}^2
 = (1-\nu)^2 \sum_{i=1}^r \| \vect{a}_i - \langle \widetilde{\vect{a}}_{\pi_i^*}, \vect{a}_i \rangle \widetilde{\vect{a}}_{\pi_i^*} \|^2.
\]
As before we have
\(
 \| \vect{a}_i - \langle \widetilde{\vect{a}}_{\pi_i^*}, \vect{a}_i \rangle \widetilde{\vect{a}}_{\pi_i^*} \|^2
 = 1 - \langle \widetilde{\vect{a}}_{\pi_i^*}, \vect{a}_i \rangle^2.
\)
By the law of cosines $\langle \widetilde{\vect{a}}_{\pi_i^*},\vect{a}_i\rangle = 1- \tfrac{1}{2}\delta_i^2$, so that $1-\langle \widetilde{\vect{a}}_{\pi_i^*},\vect{a}_i\rangle^2=\delta_i^2(1-\tfrac{1}{4}\delta_i^2)$. Since $\delta_i \le \delta < 1$, we find
$$
\varepsilon^2 =  \min_{\pi\in\mathfrak{S}_r} \| A \odot B \odot C  - (\widetilde{A} \odot \widetilde{B}\odot \widetilde{C})P_{\pi} \|_F^2 \geq (1-\nu)^2 \sum_{i=1}^r \delta_i^2(1-\tfrac{1}{4}\delta_i^2) \ge \frac{3}{4} (1-\nu)^2 \delta^2,
$$
because $\delta^2 = \sum_{i=1}^r \delta_i^2$. This concludes the proof.\qed

\subsection{Proof of Lemma \ref{lem_opendense}}
Recall that $\rho_Q = \operatorname{Id}_{\R^{n_1}} \otimes \operatorname{Id}_{\R^{n_2}} \otimes Q^T$. Both $\Sec{r}{\Var{S}_{n_1,n_2,n_3}}$ and $\Sec{r}{\Var{S}_{n_1,n_2,2}}$ are generically $r$-identifiable by \cref{lem_identifiable} because of the assumption on~$r$.
The image $\Phi_r( \Var{U}_\epsilon/\mathfrak{S}_r)$ is open because $\Phi_r$ is a diffeomorphism onto its image and $\Var{U}_\epsilon/\mathfrak{S}_r \subset \widehat{\Var{M}}_{r}^{n_1,n_2,n_3}$ is an open submanifold by construction. The key step consists of showing that 
$$\Var{N}^* =  \rho_Q^{-1}(\Var{N}_{r;n_1,n_2,2})\cap \Var{N}_{r;n_1,n_2,n_3}$$
is open dense in $\sigma_r(\Var{S}_{n_1,n_2,n_3})$.
By \cref{prop_image}, we already know that $\Var{N}_{r;n_1,n_2,n_3}$ is open dense, so that it suffices to prove that $\rho_Q^{-1}(\Var{N}_{r;n_1,n_2,2})$ is dense in $\sigma_r(\Var{S}_{n_1,n_2,n_3})$. We show this next.

Let $\tensor{A} \in \Sec{r}{\Var{S}_{n_1,n_2,n_3}}$ be arbitrary. We let $\tensor{B} := \rho_Q(\tensor{A})$ and write
\[
 \tensor{A} = \sum_{i=1}^r  \sten{a}{i}{} \otimes \sten{b}{i}{} \otimes \vect{c}_i \text{ and } \tensor{B} = \sum_{i=1}^r \sten{a}{i}{} \otimes \sten{b}{i}{} \otimes \vect{z}_i, \text{ where } \vect{a}_i\in \R^{n_1},\vect{b}_i\in \R^{n_2},\vect{c}_i\in \R^{n_3},\vect{z}_i\in \R^{2}.
\]
Let us decompose $\vect{c}_i = Q\vect{z}_i + Q^\perp \vect{z}_i'$ where $Q^\perp \in \R^{n_3 \times (n_3-2)}$ is a matrix whose columns form an orthonormal basis of the orthogonal complement of the space spanned by the columns of $Q$ and~$\vect{z}_i'\in \R^{n_3-2}$.
Consider a generic sequence such that
\[
 \lim_{k\to\infty} \sten{a}{i}{(k)} = \sten{a}{i}{},\;
 \lim_{k\to\infty} \sten{b}{i}{(k)} = \sten{b}{i}{},\;\text{ and }
 \lim_{k\to\infty} {\vect{z}}_{i}^{(k)} = {\vect{z}}_{i}.
\]
Note that $\tensor{B}_i^{(k)} :=  \sten{a}{i}{(k)} \otimes \sten{b}{i}{(k)} \otimes \sten{z}{i}{(k)}$ lives in $\Var{S}_{n_1,n_2,2}$ by construction. As the sequence is arbitrary and $\Var{M}_{r;n_1,n_2,2}$ is open dense in $\Var{S}_{n_1,n_2,2}^{\times r}$
by \cref{prop_nice}, we can assume that the sequence is restricted so that all $(\tensor{B}_1^{(k)}, \ldots, \tensor{B}_r^{(k)}) \in \Var{M}_{r;n_1,n_2,2}$. Taking the quotient with the symmetric group $\mathfrak{S}_r$, we get by \cref{prop_permutation}:
\(
\{\tensor{B}_1^{(k)}, \ldots, \tensor{B}_r^{(k)} \} \in \widehat{\Var{M}}_{r;n_1,n_2,2}.
\)
Note that $\Phi_r \bigl( \{ \tensor{B}_1^{(k)}, \ldots, \tensor{B}_r^{(k)} \} \bigr) = \sum_{i=1}^r \tensor{B}_i^{(k)} \in \Var{N}_{r;n_1,n_2,2}$ by \cref{prop_image}. Now, let
\[
 \tensor{A}_i^{(k)} := \sten{a}{i}{(k)} \otimes \sten{b}{i}{(k)} \otimes (Q {\vect{z}}_{i}^{(k)} + Q^\perp {\vect{z}}_i').
\]
Then, $\rho_Q(\tensor{A}_i^{(k)}) = \tensor{B}_i^{(k)}$ so that $\tensor{A}_i^{(k)} \in \rho_Q^{-1}(\Var{N}_{r;n_1,n_2,2})$.
Now observe that
\(
 \lim_{k\to\infty} \sum_{i=1}^r \tensor{A}_i^{(k)} = \tensor{A};
\)
in other words, $\tensor{A} \in \overline{\rho^{-1}(\Var{N}_{r;n_1,n_2,2})}$. Since it was arbitrary, this proves the claim. \qed

\subsection{Proof of Lemma \ref{lem_localisometry}}
Recall from \cref{eqn_psi_star} the map \(\Psi^*_{n_1,n_2,n_3}\) and that it is a diffeomorphism. There is a natural isomorphism between $\R\setminus\{0\} \times \mathbb{S}^+(\R^{n_3})$ and $\R^{n_3}\setminus\{0\}$, so that
\[
\Psi^{**}: \mathbb{S}^+(\R^{n_1}) \times \mathbb{S}^+(\R^{n_2}) \times \R^{n_3}\setminus\{0\} \to \Var{S},\;
(\vect{x}, \vect{y}, \vect{z}) \mapsto \vect{x} \otimes \vect{y} \otimes \vect{z}
\]
also is a diffeomorphism. The reason for introducing $\Psi^{**}$ is that it is difficult to ensure that the tensor $\tensor{O}$ lies in the image of $\Psi^*$. Nevertheless, $\tensor{O}$ lies in the image of $\Psi^{**}$. Since $\Psi^{**}$ is a diffeomorphism, there is a Lipschitz constant $\ell$ so that for all $i=1,\ldots,r$ we have
\[
 \| (\vect{a}_i, \vect{b}_i, \vect{c}_i ) - (\vect{a}_i', \vect{b}_i', \vect{c}_i') \| \le \ell \| \tensor{A}_i - \tensor{O}_i \|_F \le \ell \epsilon,
\]
where the norm on the left-hand side is the standard product norm of the Euclidean norms on $\mathbb{S}(\R^{n_1})$, $\mathbb{S}(\R^{n_2})$, and $\R^{n_3}$.
In particular, this implies:
$$
\Vert A-A'\Vert_F < \sqrt{r}\ell \epsilon,\quad \Vert B-B'\Vert_F < \sqrt{r}\ell \epsilon,\quad \Vert C-C'\Vert_F < \sqrt{r}\ell \epsilon.
$$
Hence, for $S \geq \sqrt{r}\ell$ the first part of the lemma holds.
For the second part, we write $\Delta \vect{b}_i : = \vect{b}_i - \vect{b}_i'$ and $\Delta \vect{c}_i : = \vect{c}_i - \vect{c}_i'$. Then, we have
$$
\vect{b}_i\otimes \vect{c}_i = \vect{b}_i' \otimes \vect{c}_i' + \vect{b}_i' \otimes \Delta \vect{c}_i + \Delta \vect{b}_i \otimes \vect{c}_i' +  \Delta \vect{b}_i \otimes \Delta \vect{c}_i .
$$
By the definition of the odeco tensor $\tensor{O}$ in \cref{eqn_cot}, we have $\Vert \vect{b}_i'\Vert = \Vert \vect{c}_i'\Vert = 1$. Using the triangle inequality and the computation rules for inner products from \cref{eq:innerprod}, we get
\begin{align*}
\bigl| \Vert \vect{b}_i\otimes \vect{c}_i \Vert_F - \Vert \vect{b}_i' \otimes \vect{c}_i' \Vert_F \bigr|
&\le \Vert \vect{b}_i' \otimes \Delta \vect{c}_i \Vert_F + \Vert \Delta \vect{b}_i \otimes \vect{c}_i' \Vert_F +  \Vert \Delta \vect{b}_i \otimes \Delta \vect{c}_i \Vert_F \\
&= \Vert \vect{b}_i' \Vert \Vert \Delta\vect{c}_i \Vert_F + \Vert \Delta\vect{b}_i\Vert \Vert\vect{c}_i' \Vert_F +  \Vert \Delta \vect{b}_i \Vert \Vert \Delta \vect{c}_i \Vert_F \le 2 \ell \epsilon + \ell^2 \epsilon^2.
\end{align*}
Since $\| \vect{b}_i' \otimes \vect{c}_i' \| =1$, taking $S\geq\max\{ (\ell+2) \epsilon \ell, \sqrt{r}\ell\}$ finishes the proof.
\qed


\providecommand{\bysame}{\leavevmode\hbox to3em{\hrulefill}\thinspace}
\providecommand{\MR}{\relax\ifhmode\unskip\space\fi MR }
\providecommand{\MRhref}[2]{%
  \href{http://www.ams.org/mathscinet-getitem?mr=#1}{#2}
}
\providecommand{\href}[2]{#2}

\end{document}